\documentclass[11pt]{article}
       \usepackage{amsfonts}
       \usepackage{stmaryrd}
       \usepackage{latexsym,amssymb,mathrsfs,fancyhdr}
       \font\tenmsb=msbm10
       \font\sevenmsb=msbm7
       \font\fivemsb=msbm5
       \catcode`\@=11
       \ifx\amstexloaded@\relax\catcode`\@=\active
       \endinput\else\let\amstexloaded@\relax\fi
       \def\spaces@{\space\space\space\space\space}
       \def\spaces@@{\spaces@\spaces@\spaces@\spaces@\spaces@}
       \def\space@.  {\futurelet\space@\relax}
       \space@.   %
       \def\Err@#1{\errhelp\defaulthelp@\errmessage{AmS-TeX error: #1}}
       \def\relaxnext@{\let\next\relax}
       \def\accentfam@{7}
       \def\noaccents@{\def\accentfam@{0}}
       \def\Cal{\relaxnext@\ifmmode\let\next\Cal@\else
       \def\next{\Err@{Use \string\Cal\space only in math mode}}\fi\next}
       \def\Cal@#1{{\Cal@@{#1}}}
       \def\Cal@@#1{\noaccents@\fam\tw@#1}
       \def\Bbb{\relaxnext@\ifmmode\let\next\Bbb@\else
       \def\next{\Err@{Use \string\Bbb\space only in math mode}}\fi\next}
       \def\Bbb@#1{{\Bbb@@{#1}}}
       \def\Bbb@@#1{\noaccents@\fam\msbfam#1}
       \newfam\msbfam
       \textfont\msbfam=\tenmsb
       \scriptfont\msbfam=\sevenmsb
       \scriptscriptfont\msbfam=\fivemsb

\usepackage{dsfont}
\usepackage[german,english]{babel}
\usepackage{amsmath,amssymb}
\usepackage[square, comma, sort&compress, numbers]{natbib}
\usepackage{cases}
\usepackage{mathrsfs}
\usepackage{amsmath}
\usepackage{amsthm}
\usepackage{amsfonts}
\usepackage{amssymb}
\usepackage{latexsym}
\usepackage{fancyhdr}
\usepackage{extarrows}
\usepackage{geometry}
\geometry{left=3.2cm,right=2.8cm,top=4cm,bottom=4cm}

\newtheorem{thm}{Theorem}[section]
\newtheorem{prop}[thm]{Proposition}
\newtheorem{lem}[thm]{Lemma}
\newtheorem{rem}[thm]{Remark}
\newtheorem{iteration lemma}[thm]{iteration Lemma}
\newtheorem{cor}[thm]{Corollary}
\newtheorem{defn}[thm]{Definition}

\newtheorem{eg}[thm]{Example}

\newtheorem*{acknowledgements*}{ACKNOWLEDGEMENTS}

\begin{document}

\setlength{\columnsep}{5pt}
\title{\bf *-DMP elements in $*$-semigroups and $*$-rings}
\author{Yuefeng Gao\footnote{ E-mail: yfgao91@163.com},
 \ Jianlong Chen\footnote{ Corresponding author. E-mail: jlchen@seu.edu.cn},
 \ Yuanyuan Ke\footnote{ E-mail: keyy086@126.com} \\
Department of  Mathematics, Southeast University, \\ Nanjing 210096,  China}
     \date{}

\maketitle
\begin{quote}
{\textbf{Abstract: }\small In this paper, we investigate *-DMP elements in $*$-semigroups and $*$-rings. The notion of *-DMP element was introduced by Patr\'{i}cio in 2004. An element $a$ is *-DMP if there exists a positive integer $m$ such that $a^{m}$ is EP. We first characterize *-DMP elements in terms of the \{1,3\}-inverse, Drazin inverse and pseudo core inverse, respectively.
 Then, we give the pseudo core decomposition utilizing the pseudo core inverse, which extends the core-EP decomposition introduced by Wang for matrices to an arbitrary $*$-ring; and this decomposition turns to be a  useful tool to characterize *-DMP elements. Further, we extend Wang's core-EP order from matrices to $*$-rings and use it to investigate *-DMP elements. Finally, we give necessary and sufficient conditions for two elements $a,~b$ in $*$-rings to have $aa^{\scriptsize\textcircled{\tiny D}}=bb^{\scriptsize\textcircled{\tiny D}}$, which contribute to investigate *-DMP elements.

\textbf {Keywords:} {\small  *-DMP element; pseudo core inverse; core-EP decomposition; core-EP order}

\textbf {AMS Subject Classifications:} 15A09; 16W10}
\end{quote}

\section{ Introduction }
Let $S$ and $R$ denote a semigroup and a ring with unit 1, respectively.

An element $a\in S$ is Drazin invertible \cite{D1958} if there exists the unique element $a^D\in S$ such that
$$a^ma^Da=a^m~\text{for some positive integer}~m,~~a^Daa^D=a^D~\text{and}~~aa^D=a^Da.$$
The smallest positive integer $m$ satisfying above equations is called the Drazin index of $a$. We denote by $a^{D_m}$ the Drazin inverse of index $m$ of $a$. If the Drazin index of $a$ equals one, then the Drazin inverse of $a$ is called the group inverse of $a$ and is denoted by $a^{\#}$.

$S$ is called a $*$-semigroup if $S$ is a semigroup with involution $*$. $R$ is called a $*$-ring if $R$ is a ring with involution $*$. In the following, unless otherwise indicated, $S$ and $R$ denote a $*$-semigroup and a $*$-ring, respectively.

An element $a\in S$ is Moore-Penrose invertible, if there exists $x\in S$ such that
$$(1)~axa=a,~~(2)~xax=x,~~(3)~(ax)^{*}=ax~~\text{and}~~(4)~(xa)^{*}=xa.$$
If such $x$ exists, then it is unique, denoted by $a^{\dag}$. $x$ satisfying equations $(1)$ and~$(3)$ is called a $\{1,3\}$-inverse of $a$, denoted by $a^{(1,3)}$. Such $\{1,3\}$-inverse of $a$ is not unique if it exists. We use $a\{1,3\}$, $S^{\{1,3\}}$ to denote the set of all the $\{1,3\}$-inverses of $a$ and the set of all the $\{1,3\}$-invertible elements in $S$, respectively.

An element $a\in S$ is symmetric if $a^*=a$. $a\in S$ is EP if $a^{\#}$ and $a^{\dag}$ exists with $a^{\#}=a^{\dag}$. $a\in S$ is *-DMP with index $m$ if $m$ is the smallest positive integer such that $(a^{m})^{\#}$ and $(a^{m})^{\dag}$ exist with $(a^{m})^{\#}=(a^{m})^{\dag}$ \cite{D2004}. In other words, $a\in S$ is *-DMP with index $m$ if $m$ is the smallest positive integer such that $a^{m}$ is EP. We call $a\in S$ *-DMP if there exists a positive integer $m$ such that $a^{m}$ is EP.

Baksalary and Trenkler \cite{D2010} introduced the notion of core inverse for complex matrices in 2010. Then, Raki\'{c}, Din\v{c}i\'{c} and Djordjevi\'{c} \cite{DDD2014} generalized this notion to an arbitrary $*$-ring. Later, Xu, Chen and Zhang \cite{X2016} characterized the core invertible elements in $*$-rings in terms of three equations. The core inverse of $a$ is the unique solution to equations $$xa^2=a,~~ax^2=x,~~(ax)^{*}=ax.$$

The pseudo core inverse \cite{A2016} of $a\in S$ is the unique element $a^{\scriptsize\textcircled{\tiny D}}\in S$ satisfying the following three equations~$$a^{\scriptsize\textcircled{\tiny D}}a^{m+1}=a^m~\text{for some positive integer}~m,~a(a^{\scriptsize\textcircled{\tiny D}})^2=a^{\scriptsize\textcircled{\tiny D}}~\text{and}~(aa^{\scriptsize\textcircled{\tiny D}})^{*}=aa^{\scriptsize\textcircled{\tiny D}}.$$
The smallest  positive integer $m$ satisfying above equations is called
the pseudo core index of $a$, denoted by $I(a)$. The pseudo core inverse is a kind of outer inverse.
If the pseudo core index equals one, then the pseudo core inverse of $a$ is the core inverse of $a$, denoted by $a^{\tiny\textcircled{\tiny \#}}$. The pseudo core inverse extends core-EP inverse \cite{2014D} from matrices to $*$-semigroups (see \cite{A2016}).

Dually, the dual pseudo core inverse \cite{A2016} of $a\in S$ is the unique element $a_{\scriptsize\textcircled{\tiny D}}\in S$ satisfying the following three equations~$$a^{m+1}a_{\scriptsize\textcircled{\tiny D}}=a^m~\text{for some positive integer}~m,~(a_{\scriptsize\textcircled{\tiny D}})^2a=a_{\scriptsize\textcircled{\tiny D}}~\text{and}~(a_{\scriptsize\textcircled{\tiny D}}a)^{*}=a_{\scriptsize\textcircled{\tiny D}}a.$$
The smallest  positive integer $m$ satisfying above equations is called
the dual pseudo core index of $a$.
We denote by $a^{\scriptsize\textcircled{\tiny D}_m}$ and $a_{\scriptsize\textcircled{\tiny D}_m}$ the pseudo core inverse and dual pseudo core inverse of index $m$ of $a$, respectively. Note that $(a^*)^{\scriptsize\textcircled{\tiny D}_m}$ exists if and only if $a_{\scriptsize\textcircled{\tiny D}_m}$ exists with $(a^*)^{\scriptsize\textcircled{\tiny D}_m}=(a_{\scriptsize\textcircled{\tiny D}_m})^*$.

Wang \cite{2016} introduced the core-EP decomposition and core-EP order for complex matrices by applying the core-EP inverse.

In this paper,  we mainly  characterize *-DMP elements in terms of the pseudo core inverse.
In Section 2, we first give several characterizations of *-DMP elements in terms of the \{1,3\}-inverse, Drazin inverse and pseudo core inverse respectively.
Then, we characterize *-DMP elements in terms of equations and annihilators. At last, we consider conditions for the sum (resp. product) of two *-DMP elements to be *-DMP.
In section 3, we extend the core-EP decomposition from complex matrices to an arbitrary $*$-ring and name it pseudo core decomposition, since it seems no connection with EP in $*$-rings, but closely related with the pseudo core inverse. As applications,  we use it to characterize *-DMP elements.
In Section 4, we introduce the pseudo core order, which extends the core-EP order from complex matrices to an arbitrary $*$-ring. Then, we use this pre-order to investigate *-DMP elements.
In final section, we aim to give equivalent conditions for $aa^{\scriptsize\textcircled{\tiny D}}=bb^{\scriptsize\textcircled{\tiny D}}$ in $*$-rings, which contribute to investigate *-DMP elements.

\section{Characterizations of *-DMP elements}
EP elements are widely investigated in $*$-semigroups and $*$-rings ( see \cite{N2006}, \cite{W2012}, \cite{D1976}, \cite{DD1976}, \cite{M2009}, \cite{2009}, \cite{D2004}).
In this section,  we give several characterizations of *-DMP elements. We begin with some auxiliary lemmas.
\begin{lem}\label{4} \emph{\cite{A2016}}~Let $a\in S$. Then we have the following facts:\\
$(1)$ $a^{\scriptsize\textcircled{\tiny D}_m}$ exists~if and only if~$a^{D_m}$ exists and $a^m\in S^{\{1,3\}}$. In this case $a^{\scriptsize\textcircled{\tiny D}_m}=a^{D_m}a^m(a^m)^{(1,3)}$. \\
$(2)$ $a^{\scriptsize\textcircled{\tiny D}_m}$ and $a_{\scriptsize\textcircled{\tiny D}_m}$ exist~if and only if~$a^{D_m}$~and~$(a^m)^{\dag}$ exist.
In this case, $a^{\scriptsize\textcircled{\tiny D}_m}=a^{D_m}a^m(a^m)^{\dag}$ and $a_{\scriptsize\textcircled{\tiny D}_m}=(a^m)^{\dag}a^ma^{D_m}$.
\end{lem}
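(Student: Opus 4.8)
The plan is to reduce everything to direct verifications against the defining equations, leaning on three elementary facts: $a^{D_m}$ commutes with $a$, hence with every power of $a$; consequently $a^{D_m}a^{m+1}=a^{m}$ (shuttle $a^{D_m}$ rightwards past the $m+1$ copies of $a$ and apply $a^{m}a^{D_m}a=a^{m}$); and the relations $a^{m}(a^{m})^{(1,3)}a^{m}=a^{m}$ and $\big(a^{m}(a^{m})^{(1,3)}\big)^{*}=a^{m}(a^{m})^{(1,3)}$ defining a $\{1,3\}$-inverse, together with their $\{1,4\}$- and Moore--Penrose analogues for part $(2)$.

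For the ``if'' half of $(1)$ I would set $x:=a^{D_m}a^{m}(a^{m})^{(1,3)}$ and check the three pseudo core axioms with the same $m$. One gets $xa^{m+1}=a^{D_m}\big(a^{m}(a^{m})^{(1,3)}a^{m}\big)a=a^{D_m}a^{m+1}=a^{m}$; then $ax=a^{D_m}a^{m+1}(a^{m})^{(1,3)}=a^{m}(a^{m})^{(1,3)}=:p$, which is a symmetric idempotent, so $(ax)^{*}=ax$; and $ax^{2}=px=a^{m}\big((a^{m})^{(1,3)}a^{m}\big)a^{D_m}(a^{m})^{(1,3)}=a^{m}a^{D_m}(a^{m})^{(1,3)}=x$, moving $a^{D_m}$ past $a^{m}$ once more and using $pa^{m}=a^{m}$. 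Hence $x$ is a pseudo core inverse of $a$, and uniqueness forces $a^{\scriptsize\textcircled{\tiny D}_m}=x$, the asserted formula.

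For the ``only if'' half of $(1)$, write $b:=a^{\scriptsize\textcircled{\tiny D}_m}$. Iterating $ab^{2}=b$ gives $b=a^{k}b^{k+1}$ and $ab^{k+1}=b^{k}$ for every $k\ge1$, so $ab=a^{m+1}b^{m+1}=a^{m}b^{m}$. I would then establish in turn: $b$ is an outer inverse, since $bab=b(ab)=b\,a^{m+1}b^{m+1}=(ba^{m+1})b^{m+1}=a^{m}b^{m+1}=b$, so $ab$ is a symmetric idempotent; $(ab)(ba)=ab^{2}a=ba$, so that, since $(ba)a^{m}=ba^{m+1}=a^{m}$, we get $(ab)a^{m}=(ab)(ba)a^{m}=(ba)a^{m}=a^{m}$; consequently $a^{m}(b^{m})a^{m}=(ab)a^{m}=a^{m}$ and $(a^{m}b^{m})^{*}=(ab)^{*}=ab=a^{m}b^{m}$, i.e.\ $b^{m}\in a^{m}\{1,3\}$. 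Finally, from $a^{m}=(ab)a^{m}=a^{m+1}(b^{m+1}a^{m})$ and $a^{m}=ba^{m+1}$ one obtains, after iterating, $a^{m}=a^{2m}c=c'a^{2m}$ for suitable $c,c'\in S$, so $a^{m}$ is group invertible and hence $a$ is Drazin invertible with index at most $m$; then the ``if'' half applies and, by uniqueness of the pseudo core inverse, $b=a^{D_m}a^{m}(a^{m})^{(1,3)}$. I expect this last step --- passing from the pseudo core axioms to Drazin invertibility of $a$ in an arbitrary $*$-semigroup, routed through the group invertibility of $a^{m}$ --- to be the only point demanding real care.

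For $(2)$ I would combine $(1)$ with its left--right dual. If $a^{\scriptsize\textcircled{\tiny D}_m}$ and $a_{\scriptsize\textcircled{\tiny D}_m}$ exist, then $a^{D_m}$ exists and $a^{m}$ possesses both a $\{1,3\}$- and a $\{1,4\}$-inverse, hence $(a^{m})^{\dag}$ exists; conversely, if $a^{D_m}$ and $(a^{m})^{\dag}$ exist, then $(a^{m})^{\dag}$ is simultaneously a $\{1,3\}$- and a $\{1,4\}$-inverse of $a^{m}$, so $(1)$ and its dual furnish $a^{\scriptsize\textcircled{\tiny D}_m}$ and $a_{\scriptsize\textcircled{\tiny D}_m}$. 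Substituting the particular inverse $(a^{m})^{\dag}$ into the expressions supplied by $(1)$ and its dual then yields $a^{\scriptsize\textcircled{\tiny D}_m}=a^{D_m}a^{m}(a^{m})^{\dag}$ and $a_{\scriptsize\textcircled{\tiny D}_m}=(a^{m})^{\dag}a^{m}a^{D_m}$.
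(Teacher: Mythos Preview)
The paper does not supply a proof of this lemma; it is quoted verbatim from the companion paper \cite{A2016}, so there is nothing in the present paper to compare against. Assessing your argument on its own, the computations are correct: in the ``if'' half of~$(1)$ your three checks go through exactly as written, and in the ``only if'' half the chain $b=a^{k}b^{k+1}$, $bab=b$, $(ab)(ba)=ba$, $(ab)a^{m}=a^{m}$, leading to $b^{m}\in a^{m}\{1,3\}$ and $a^{m}\in a^{2m}S\cap Sa^{2m}$, is the standard route and is carried out correctly. Part~$(2)$ follows from~$(1)$ and its dual just as you say.

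The only point to tighten is the one you yourself flagged: matching the \emph{exact} indices. As written, your two halves show that pseudo core index $m$ forces Drazin index $\le m$, and that Drazin index $m$ together with $a^{m}\in S^{\{1,3\}}$ forces pseudo core index $\le m$. To close the equivalence with ``index exactly $m$'' on both sides you need one more observation in the ``only if'' half: if the Drazin index turned out to be some $k<m$, then $a^{k}$ would also lie in $S^{\{1,3\}}$, since $z:=a^{m-k}(a^{m})^{(1,3)}$ satisfies $a^{k}z=a^{m}(a^{m})^{(1,3)}$ (symmetric) and $a^{k}za^{k}=a^{m}(a^{m})^{(1,3)}a^{m}(a^{D})^{m-k}=a^{m}(a^{D})^{m-k}=a^{k}$. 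Feeding this back into your ``if'' half would then give pseudo core index $\le k<m$, a contradiction; hence $k=m$. With that remark added, the biconditional holds at the level of exact indices and the formula $a^{\scriptsize\textcircled{\tiny D}_m}=a^{D_m}a^{m}(a^{m})^{(1,3)}$ is justified as stated.
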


\begin{lem}\emph{\cite{D2002}},\emph{\cite{D2004}} \label{2}~Let $a\in S$. Then the following conditions are equivalent:\\
$(1)$~$a$ is *-DMP with index $m$;\\
$(2)$~$a^{D_m}$ exists and $aa^{D_m}$ is symmetric.
\end{lem}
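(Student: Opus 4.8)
The plan is to reduce the statement to the classical description of EP elements together with the elementary algebra of the Drazin inverse of powers. The key preliminary fact, valid in an arbitrary $*$-semigroup, is that an element $x$ is EP if and only if $x^{\#}$ exists and $xx^{\#}$ is symmetric: if $x^{\#}$ exists with $(xx^{\#})^{*}=xx^{\#}$, then $x^{\#}$ itself satisfies the four Moore--Penrose equations for $x$, since $x^{\#}$ commutes with $x$ and hence $(x^{\#}x)^{*}=(xx^{\#})^{*}=xx^{\#}=x^{\#}x$, so that $x^{\dag}=x^{\#}$; the converse is immediate from $(xx^{\dag})^{*}=xx^{\dag}$. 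Taking $x=a^{m}$, condition $(1)$ becomes: $m$ is the smallest positive integer for which $(a^{m})^{\#}$ exists and $a^{m}(a^{m})^{\#}$ is symmetric.

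Next I would transfer this from $a^{m}$ back to $a$. By standard properties of the Drazin inverse, $(a^{m})^{\#}$ exists precisely when $a^{D_m}$ exists, and then $(a^{m})^{\#}=(a^{D_m})^{m}$. The computation that carries the argument is that, since $a$ commutes with $a^{D_m}$ and $aa^{D_m}$ is idempotent,
$$a^{m}(a^{m})^{\#}=a^{m}(a^{D_m})^{m}=(aa^{D_m})^{m}=aa^{D_m},$$
and symmetrically $(a^{m})^{\#}a^{m}=aa^{D_m}$. Consequently $a^{m}(a^{m})^{\#}$ is symmetric if and only if $aa^{D_m}$ is symmetric. Combined with the first paragraph, this yields, for each fixed exponent $m$, the equivalence ``$a^{m}$ is EP $\iff$ $a^{D_m}$ exists and $aa^{D_m}$ is symmetric'', which is $(1)\Leftrightarrow(2)$ once the word ``index'' is matched on the two sides.

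It remains to reconcile the index, and this is where I expect the only real subtlety to lie. The displayed identity makes the symmetry condition independent of the exponent: since the Drazin inverse is unique, $a^{D_j}=a^{D}$ and $a^{j}(a^{j})^{\#}=aa^{D}$ for every $j$ for which $a^{j}$ is group invertible. Hence, if $a$ is Drazin invertible with $aa^{D}$ symmetric, then $a^{j}$ is EP exactly when $a^{j}$ is group invertible, i.e. exactly when $j$ is at least the Drazin index; so the smallest such $j$ is the Drazin index. Conversely, if $a$ is *-DMP with index $m$, then $a^{m}$ is group invertible and $aa^{D}=a^{m}(a^{m})^{\#}$ is symmetric, so the foregoing applies and forces the Drazin index to be $m$. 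The one step that is not pure involution-and-commutation bookkeeping is the classical equivalence ``$a^{m}$ group invertible $\iff$ $a$ Drazin invertible with index at most $m$'', together with $(a^{m})^{\#}=(a^{D_m})^{m}$ (see \cite{D1958}); a self-contained check of it uses $a^{m}=a^{2m}(a^{m})^{\#}\in a^{m+1}S$ and $a^{m}=(a^{m})^{\#}a^{2m}\in Sa^{m+1}$, with $a^{D}$ then expressible explicitly through $(a^{m})^{\#}$. Everything else is routine.
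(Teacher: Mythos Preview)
Your argument is correct. The paper does not supply its own proof of this lemma; it is stated as a known result with citations to \cite{D2002} and \cite{D2004}, so there is no in-paper proof to compare against. One small imprecision worth tightening: the sentence ``$(a^{m})^{\#}$ exists precisely when $a^{D_m}$ exists'' is not literally right, since $(a^{m})^{\#}$ exists whenever the Drazin index of $a$ is at most $m$, whereas $a^{D_m}$ in the paper's convention means the index is exactly $m$. You clearly understand this distinction, since your final paragraph handles the index-matching carefully and correctly; just rephrase that middle sentence so it does not clash with the later discussion.
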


\begin{lem}\label{1}~Let $a\in S$. Then the following are equivalent:\\
$(1)$~$a$ is *-DMP with index $m$;\\
$(2)$~$a^{D_m}$ and $(a^m)^{\dag}$ exist with $(a^{D_m})^m=(a^m)^{\dag}$;\\
$(3)$~$a^{\scriptsize\textcircled{\tiny D}_m}$ exists with $a^{\scriptsize\textcircled{\tiny D}_m}=a^{D_m}$;\\
$(4)$~$a^{\scriptsize\textcircled{\tiny D}_m}$ and $(a^m)^{\dag}$ exist with $(a^{\scriptsize\textcircled{\tiny D}_m})^m=(a^m)^{\dag}$.
\end{lem}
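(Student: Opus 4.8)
The plan is to prove the cycle $(1)\Rightarrow(2)\Rightarrow(3)\Rightarrow(4)\Rightarrow(1)$, leaning on Lemma~\ref{2}, Lemma~\ref{4}, and two standard facts: first, $a^{D_m}$ exists if and only if $(a^{m})^{\#}$ exists, and in that case $(a^{m})^{\#}=(a^{D_m})^{m}$; second, $aa^{D_m}$ is idempotent, so $(aa^{D_m})^{k}=aa^{D_m}$ for every $k\ge 1$. (This second fact is also what lets one read the integer $m$ consistently across the four conditions, since for a *-DMP element the *-DMP index, the Drazin index and the pseudo core index coincide.)

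The implications $(1)\Rightarrow(2)$, $(2)\Rightarrow(3)$, $(3)\Rightarrow(4)$ are short. For $(1)\Rightarrow(2)$: $a^{m}$ EP gives $(a^{m})^{\#}=(a^{m})^{\dag}$, and since $(a^{m})^{\#}=(a^{D_m})^{m}$ exists once $a^{D_m}$ exists, this is exactly $(2)$. For $(2)\Rightarrow(3)$: by Lemma~\ref{4}(2), $a^{\scriptsize\textcircled{\tiny D}_m}=a^{D_m}a^{m}(a^{m})^{\dag}$; substituting $(a^{m})^{\dag}=(a^{D_m})^{m}$ and using that $a$ commutes with $a^{D_m}$ turns the tail $a^{m}(a^{D_m})^{m}$ into $(aa^{D_m})^{m}=aa^{D_m}$, so $a^{\scriptsize\textcircled{\tiny D}_m}=a^{D_m}aa^{D_m}=a^{D_m}$. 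For $(3)\Rightarrow(4)$: from $a^{\scriptsize\textcircled{\tiny D}_m}=a^{D_m}$, the defining relation $(aa^{\scriptsize\textcircled{\tiny D}_m})^{*}=aa^{\scriptsize\textcircled{\tiny D}_m}$ forces $aa^{D_m}$ to be symmetric, so Lemma~\ref{2} makes $a^{m}$ EP; then $(a^{m})^{\dag}$ exists and $(a^{\scriptsize\textcircled{\tiny D}_m})^{m}=(a^{D_m})^{m}=(a^{m})^{\#}=(a^{m})^{\dag}$.

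The substantive step is $(4)\Rightarrow(1)$. Write $x=a^{\scriptsize\textcircled{\tiny D}_m}$ and $d=a^{D_m}$ (which exists by Lemma~\ref{4}(1)). By Lemma~\ref{4}(2), $x=da^{m}(a^{m})^{\dag}$, and the hypothesis $x^{m}=(a^{m})^{\dag}$ gives $x=da^{m}x^{m}$. Iterating the defining relation $ax^{2}=x$ yields $a^{k}x^{k+1}=x$ for every $k\ge 1$, so $x^{2}=(da^{m}x^{m})x=da^{m}x^{m+1}=dx$, and hence $x^{k}=d^{k-1}x$ for all $k\ge 1$. Also $xa^{m}=da^{m}(a^{m})^{\dag}a^{m}=da^{m}$. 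Combining, $x^{m}a^{m}=d^{m-1}(xa^{m})=d^{m-1}da^{m}=d^{m}a^{m}=(aa^{D_m})^{m}=aa^{D_m}$; but $x^{m}=(a^{m})^{\dag}$, so $x^{m}a^{m}=(a^{m})^{\dag}a^{m}$ is symmetric, and therefore $aa^{D_m}$ is symmetric. Lemma~\ref{2} then delivers $(1)$.

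The only place I expect real friction is $(4)\Rightarrow(1)$: the trick is to combine $x=da^{m}(a^{m})^{\dag}$, the hypothesis $x^{m}=(a^{m})^{\dag}$, and $ax^{2}=x$ to collapse $x^{m}a^{m}$ down to the idempotent $aa^{D_m}$, after which its symmetry (inherited from the Moore--Penrose identity for $(a^{m})^{\dag}a^{m}$) together with Lemma~\ref{2} closes the loop. Everything else is routine manipulation with the defining equations and Lemmas~\ref{2} and~\ref{4}.
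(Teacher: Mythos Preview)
Your proof is correct and follows essentially the same cycle $(1)\Rightarrow(2)\Rightarrow(3)\Rightarrow(4)\Rightarrow(1)$ as the paper, landing on the same key identity $aa^{D_m}=(a^{m})^{\dag}a^{m}$ in the step $(4)\Rightarrow(1)$. The only minor differences are cosmetic: in $(3)\Rightarrow(4)$ you read off the symmetry of $aa^{D_m}$ directly from the defining equation $(aa^{\scriptsize\textcircled{\tiny D}_m})^{*}=aa^{\scriptsize\textcircled{\tiny D}_m}$ (the paper instead manipulates the formula from Lemma~\ref{4}(1)), and in $(4)\Rightarrow(1)$ you derive $x^{2}=dx$ explicitly from the hypothesis, whereas the paper tacitly uses the general identity $(a^{\scriptsize\textcircled{\tiny D}_m})^{m}=(a^{D_m})^{m}a^{m}(a^{m})^{(1,3)}$ --- in fact $x^{2}=a^{D}x$ always holds, since $a^{D}a\,a^{\scriptsize\textcircled{\tiny D}}=a^{\scriptsize\textcircled{\tiny D}}$ combined with $a(a^{\scriptsize\textcircled{\tiny D}})^{2}=a^{\scriptsize\textcircled{\tiny D}}$ gives it without invoking the hypothesis of~(4).
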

\begin{proof}~$(1)\Rightarrow(2)$. Suppose $a$ is *-DMP with index $m$, then $m$ is the smallest positive integer such that $(a^m)^{\dag}=(a^m)^{\#}$. So, $a^{D_m}$ exists with $(a^{D_m})^m=(a^m)^{\#}=(a^m)^{\dag}$.\\
$(2)\Rightarrow(3)$. Suppose $a^{D_m}$ and $(a^m)^{\dag}$ exist with $(a^{D_m})^m=(a^m)^{\dag}$. By Lemma \ref{4}, $a^{\scriptsize\textcircled{\tiny D}_m}$ exists with $a^{\scriptsize\textcircled{\tiny D}_m}=a^{D_m}a^m(a^m)^{\dag}=a^{D_m}a^m(a^{D_m})^m=a^{D_m}$.\\
$(3)\Rightarrow(4)$. Applying Lemma \ref{4}, $a^{\scriptsize\textcircled{\tiny D}_m}$ exists if and only if $a^{D_m}$ exists and $a^m\in S^{\{1,3\}}$, in which case, $a^{\scriptsize\textcircled{\tiny D}_m}=a^{D_m}a^m(a^m)^{(1,3)}$. From $a^{\scriptsize\textcircled{\tiny D}_m}=a^{D_m}$, it follows that $a^{D_m}a^m(a^m)^{(1,3)}=a^{D_m}$. Then, $aa^{D_m}=a^m(a^m)^{(1,3)}$. So, $(a^m)^{\dag}$ exists with $(a^m)^{\dag}=(a^{D_m})^m=(a^{\scriptsize\textcircled{\tiny D}_m})^m$.\\
$(4)\Rightarrow(1)$. Since  $(a^{D_m})^ma^m(a^m)^{(1,3)}=(a^{\scriptsize\textcircled{\tiny D}_m})^m=(a^m)^{\dag}$, then $aa^{D_m}=(a^m)^{\dag}a^m$. Therefore $aa^{D_m}$ is symmetric.
Hence $a$ is *-DMP with index $m$ by Lemma \ref{2}.
\end{proof}
\vspace{2mm}

The following result characterizes *-DMP elements in terms of  $\{1,3\}$-inverses.
\begin{thm}\label{3}~Let $a\in S$. Then $a$ is *-DMP with index $m$ if and only if $m$ is the smallest positive integer such that $a^m\in S^{\{1,3\}}$ and one of the following equivalent conditions holds:\\
$(1)$~$a(a^m)^{(1,3)}=(a^m)^{(1,3)}a$ for some $(a^m)^{(1,3)}\in a^m\{1,3\}$;\\
$(2)$~$a^m(a^m)^{(1,3)}=(a^m)^{(1,3)}a^m$ for some $(a^m)^{(1,3)}\in a^m\{1,3\}$.
\end{thm}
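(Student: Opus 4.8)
The plan is to reduce Theorem~\ref{3} to the defining property of a $*$-DMP element --- that $m$ be the smallest power for which $a^m$ is EP --- by means of a single equivalence proved power by power: \emph{for every positive integer $n$, the element $a^n$ is EP if and only if $a^n\in S^{\{1,3\}}$ and $(1)_n$ holds, equivalently if and only if $a^n\in S^{\{1,3\}}$ and $(2)_n$ holds}. Once this is available for all $n$, the least $n$ for which $a^n$ is EP coincides with the least $n$ for which the right-hand condition holds, which is precisely the assertion of the theorem; moreover the equivalence of $(1)$ and $(2)$ drops out. So the whole proof amounts to establishing this pointwise equivalence, and I would arrange it as the cycle $(1)\Rightarrow(2)\Rightarrow[\,a^m\text{ is EP}\,]\Rightarrow(1)$.

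The implication $(1)\Rightarrow(2)$ is immediate: if $x$ is a $\{1,3\}$-inverse of $a^m$ with $ax=xa$, then $a^kx=xa^k$ for every $k$ by induction, so the same $x$ witnesses $(2)$. The heart of the matter is $(2)\Rightarrow[\,a^m\text{ is EP}\,]$. Given a $\{1,3\}$-inverse $x$ of $a^m$ with $a^mx=xa^m$, I would pass to the reflexivised candidate $x':=xa^mx$. Rewriting words in $x$ and $a^m$ by means of $a^mxa^m=a^m$ shows that $a^mx'a^m=a^m$ and $x'a^mx'=x'$, and that $a^mx'=a^mx$ while $x'a^m=xa^m$; since $a^mx=xa^m$ by hypothesis and $a^mx$ is symmetric, it follows that $a^mx'=x'a^m=a^mx$ is a symmetric idempotent commuting with $a^m$. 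Hence $x'$ satisfies all four Moore-Penrose equations and commutes with $a^m$, so $x'=(a^m)^{\dag}=(a^m)^{\#}$ and $a^m$ is EP. This argument never refers to the value of the exponent, so it simultaneously yields the ``if'' half of the pointwise equivalence for every $n$.

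For $[\,a^m\text{ is EP}\,]\Rightarrow(1)$ I would recall --- as already used inside the proof of Lemma~\ref{1} --- that the existence of $(a^m)^{\#}$ forces $a^{D_m}$ to exist with $(a^{D_m})^m=(a^m)^{\#}$; since $a^{D_m}$ commutes with $a$, the element $x:=(a^m)^{\dag}=(a^m)^{\#}=(a^{D_m})^m$ is a $\{1,3\}$-inverse of $a^m$ commuting with $a$, which is $(1)$. Closing the cycle yields both $(1)\Leftrightarrow(2)$ and the pointwise equivalence, and since the latter holds for every $n<m$ as well, minimality carries over and the theorem follows. One could instead route the last two implications through Lemmas~\ref{2} and~\ref{4}, observing that $x'=xa^mx=(a^m)^{\#}$ forces $aa^{D_m}=a^m(a^m)^{\dag}$, which is symmetric. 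I expect the one genuinely delicate point to be the reflexivisation step --- checking that replacing $x$ by $xa^mx$ simultaneously preserves the commutation with $a^m$, produces a reflexive (hence outer) inverse, and keeps both idempotents symmetric, so that $x'$ really is the Moore-Penrose (equivalently group) inverse of $a^m$. Everything else is either purely formal (the transfer of minimality) or already contained in the cited lemmas.
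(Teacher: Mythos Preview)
Your proof is correct and follows essentially the same route as the paper: the forward direction exhibits $(a^m)^{\#}=(a^{D})^m$ as a $\{1,3\}$-inverse of $a^m$ commuting with $a$, and the converse hinges on the reflexivisation $x'=xa^mx$, which the paper writes in one line as ``$(a^m)^{\dag}=(a^m)^{(1,3)}a^m(a^m)^{(1,3)}=(a^m)^{\#}$'' and you spell out in full. Your explicit pointwise formulation (for every $n$, $a^n$ is EP iff $a^n\in S^{\{1,3\}}$ together with $(1)_n$/$(2)_n$) is a genuine improvement in presentation, since it makes the transfer of minimality transparent; the paper simply asserts ``so $m$ is the smallest positive integer such that $(a^m)^{\dag}=(a^m)^{\#}$'' and leaves that step implicit. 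One cosmetic point: in your $[\,a^m\text{ EP}\,]\Rightarrow(1)$ step you write $a^{D_m}$, but for the pointwise version you only know the Drazin index is at most $n$, so it is cleaner to write $a^{D}$ there---the identity $(a^{D})^n=(a^n)^{\#}$ and the commutation $aa^{D}=a^{D}a$ are all that is used.
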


\begin{proof}~If $a$ is *-DMP with index $m$, then $m$ is the smallest positive integer such that $(a^m)^{\dag}$ and $(a^m)^{\#}$ exist with $(a^m)^{\dag}=(a^m)^{\#}$. So we may regard $(a^m)^{\#}$ as one of the $\{1,3\}$-inverses of $a^m$. Therefore (1) holds (see \cite[Theorem 1]{D1958}).

Conversely, we take $(a^m)^{(1,3)}\in a^m\{1,3\}$.\\
$(1)\Rightarrow(2)$ is obvious.\\
$(2)$.~Equality $a^m(a^m)^{(1,3)}=(a^m)^{(1,3)}a^m$ yields that $(a^m)^{\dag}=(a^m)^{(1,3)}a^m(a^m)^{(1,3)}=(a^m)^{\#}$. So $m$ is the smallest positive integer such that $(a^m)^{\dag}=(a^m)^{\#}$. Hence $a$ is $*$-DMP with index $m$.
\end{proof}

\begin{cor}~Let $a\in S$. Then $a$ is EP if and only if $a\in S^{\{1,3\}}$ and $aa^{(1,3)}=a^{(1,3)}a$ for some $a^{(1,3)}\in a\{1,3\}$.
\end{cor}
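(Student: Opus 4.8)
The plan is to deduce this corollary as the special case $m=1$ of Theorem~\ref{3}. The first step is to record the elementary fact that, straight from the definitions, an element $a\in S$ is EP if and only if $a$ is *-DMP with index $1$. Indeed $a^{1}=a$, so the requirement ``$a^{1}$ is EP'' is literally ``$a$ is EP'', and since there is no positive integer smaller than $1$, the minimality clause in the definition of the *-DMP index is automatically satisfied. Thus ``EP'' and ``*-DMP with index $1$'' name the same property, and the corollary is exactly Theorem~\ref{3} read at $m=1$.

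The second step is simply to instantiate Theorem~\ref{3} at $m=1$. The standing hypothesis ``$a^{m}\in S^{\{1,3\}}$'' becomes ``$a\in S^{\{1,3\}}$''; condition $(1)$, namely ``$a(a^{m})^{(1,3)}=(a^{m})^{(1,3)}a$ for some $(a^{m})^{(1,3)}\in a^{m}\{1,3\}$'', becomes ``$aa^{(1,3)}=a^{(1,3)}a$ for some $a^{(1,3)}\in a\{1,3\}$''; and condition $(2)$ collapses to the same statement, so it suffices to list it once. Combining this with the first step yields precisely the asserted equivalence.

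There is essentially no obstacle here: the content is carried entirely by Theorem~\ref{3}, and the only thing to watch is that the ``smallest positive integer'' phrasing of that theorem is vacuous when $m=1$. Accordingly I would write the proof in a single sentence, pointing to Theorem~\ref{3} together with the identification of EP elements with *-DMP elements of index $1$. (If a self-contained argument were preferred, one could instead note directly that the group inverse of an EP element is a commuting $\{1,3\}$-inverse, and conversely that a commuting $\{1,3\}$-inverse $a^{(1,3)}$ forces $a^{(1,3)}aa^{(1,3)}$ to be simultaneously the group inverse and the Moore-Penrose inverse of $a$; but routing through Theorem~\ref{3} is shorter.)
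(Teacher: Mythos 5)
Your proposal is correct and matches the paper's (implicit) argument exactly: the corollary is stated without proof precisely because it is Theorem~\ref{3} specialized to $m=1$, using the identification of EP elements with *-DMP elements of index $1$. Your remark that the minimality clause is vacuous at $m=1$ and that conditions $(1)$ and $(2)$ coincide there is the only point worth making explicit, and you make it.
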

\vspace{4mm}

In \cite[Theorem 7.3]{D2002}, Koliha and Patr\'{i}cio characterized EP elements by using the group inverse. Similarly, we
characterize *-DMP elements in terms of the Drazin inverse.
\begin{thm}\label{18}~Let $a\in S$. Then $a$ is *-DMP with index $m$ if and only if $a^{D_m}$ exists and one of the following equivalent conditions holds:\\
$(1)$~$a^{D_m}=a^{D_m}(aa^{D_m})^*$;\\
$(2)$~$a^{D_m}=(a^{D_m}a)^*a^{D_m}$.\\
If $S$ is a $*$-ring, then $(1)$-$(2)$ are equivalent to\\
$(3)$~$a^{D_m}(1-aa^{D_m})^*=(1-aa^{D_m})(a^{D_m})^*$.
\end{thm}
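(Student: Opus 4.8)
The plan is to reduce everything to the characterization of *-DMP elements already available, namely Lemma~\ref{2}: $a$ is *-DMP with index $m$ if and only if $a^{D_m}$ exists and $aa^{D_m}$ is symmetric. So throughout I assume $a^{D_m}$ exists (this is common to both sides of the claimed equivalence, since each of (1), (2), (3) presupposes $a^{D_m}$), and I must show that ``$aa^{D_m}$ symmetric'' is equivalent to each of (1), (2), (3). Write $p=aa^{D_m}=a^{D_m}a$, which is an idempotent commuting with $a$ and with $a^{D_m}$; note $pa^{D_m}=a^{D_m}p=a^{D_m}$.

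First I would treat (1). If $p$ is symmetric, then $a^{D_m}(aa^{D_m})^*=a^{D_m}p^*=a^{D_m}p=a^{D_m}$, giving (1). Conversely, suppose $a^{D_m}=a^{D_m}p^*$. Multiply on the left by $a$: since $ap=pa$ and $ap\cdot a^{D_m}=p$, we get $p=aa^{D_m}=aa^{D_m}p^*=pp^*$. Then $p=pp^*$ is symmetric because $(pp^*)^*=pp^*$. So $p$ is symmetric and Lemma~\ref{2} applies. Part (2) is the exact left-right dual: if $p$ symmetric then $(a^{D_m}a)^*a^{D_m}=p^*a^{D_m}=pa^{D_m}=a^{D_m}$; conversely from $a^{D_m}=p^*a^{D_m}$ multiply on the right by $a$ to get $p=p^*p$, which is symmetric. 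Hence (1) $\Leftrightarrow$ $p$ symmetric $\Leftrightarrow$ (2).

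For (3), assuming $S$ is a $*$-ring: the identity $a^{D_m}(1-aa^{D_m})^*=(1-aa^{D_m})(a^{D_m})^*$ rearranges to $a^{D_m}-a^{D_m}p^*=(a^{D_m})^*-p(a^{D_m})^*$. Taking $*$ of (1) gives $(a^{D_m})^*=p(a^{D_m})^*$ (note $(a^{D_m}(aa^{D_m})^*)^* = aa^{D_m}(a^{D_m})^* = p(a^{D_m})^*$), i.e. (1)$^*$ is the statement $(a^{D_m})^*-p(a^{D_m})^*=0$; similarly (1) itself is $a^{D_m}-a^{D_m}p^*=0$. So (3) is precisely the equation ``(left side of (1)) $=$ (left side of (1)$^*$),'' and I should show (3) is equivalent to both sides being zero. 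If $p$ is symmetric, both sides of (3) vanish (by (1) and its conjugate), so (3) holds. Conversely, suppose (3) holds. Multiply (3) on the left by $p$: since $pa^{D_m}=a^{D_m}$ and $p(1-p)=0$, the left side becomes $a^{D_m}(1-p)^* = a^{D_m}-a^{D_m}p^*$... wait, I must be careful, $p$ does not obviously commute with $(a^{D_m})^*$. Multiply (3) on the left by $p$: LHS $= p a^{D_m}(1-p)^* = a^{D_m}(1-p)^*$; RHS $= p(1-p)(a^{D_m})^* = 0$. So $a^{D_m}(1-p)^* = 0$, which is exactly (1). Then by the equivalence (1)$\Leftrightarrow$($p$ symmetric) already established, we are done.

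The main obstacle, and the point requiring the most care, is the bookkeeping in part (3): keeping straight which products of $p$ and $(a^{D_m})^*$ are known to collapse (only $pa^{D_m}=a^{D_m}$ and $p^2=p$ are ``free''; $p(a^{D_m})^*$ is not simplifiable a priori) and choosing the right one-sided multiplier to extract (1) from (3). Once (3)$\Rightarrow$(1) is secured by left-multiplying by $p$, and (1)$\Rightarrow$(3) by conjugating (1), everything funnels back through Lemma~\ref{2}. I would present the argument in the order: establish (1)$\Leftrightarrow$($aa^{D_m}$ symmetric), then (2)$\Leftrightarrow$($aa^{D_m}$ symmetric) by duality, then in the $*$-ring case (1)$\Leftrightarrow$(3), and conclude by Lemma~\ref{2}.
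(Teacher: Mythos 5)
Your proof is correct, and it takes a genuinely different and cleaner route than the paper's. The paper handles the converse direction by funnelling both (1) and (2) into (3) (computing $a^{D_m}(1-aa^{D_m})^*=a^{D_m}\bigl((1-aa^{D_m})aa^{D_m}\bigr)^*=0$) and then extracting symmetry of $aa^{D_m}$ from (3) by rewriting it as $(a^{D_m})^*=a^{D_m}\bigl(1-a^*(a^{D_m})^*+a(a^{D_m})^*\bigr)$ and post-multiplying by $(a^{D_m})^*(a^2)^*$; every step of that converse manipulates $1-aa^{D_m}$, i.e.\ uses the ring structure, even though the theorem asserts (1) and (2) in a bare $*$-semigroup. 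You instead isolate the idempotent $p=aa^{D_m}$ and show directly that (1) forces $p=pp^*$ (left-multiply by $a$) and (2) forces $p=p^*p$ (right-multiply by $a$), each of which is manifestly symmetric; this is purely multiplicative, so it genuinely covers the semigroup case, and it is shorter. Your treatment of (3) --- left-multiplying by $p$ to annihilate the right-hand side and recover (1), after checking that $p\,a^{D_m}=a^{D_m}$ and $p(1-p)=0$ --- is likewise simpler than the paper's post-multiplication trick. Both arguments then conclude via Lemma~\ref{2}.
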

\begin{proof}~If $a$ is *-DMP with index $m$, then $a^{D_m}$ exists and
 $aa^{D_m}$ is symmetric by Lemma~\ref{2}. It is not difficult to verify that conditions (1)-(3) hold.

 Conversely, we assume that $a^{D_m}$ exists.\\
 $(1)\Rightarrow(3)$.~Since $a^{D_m}=a^{D_m}(aa^{D_m})^*$, we have $$a^{D_m}(1-aa^{D_m})^*=a^{D_m}(aa^{D_m})^*(1-aa^{D_m})^*=a^{D_m}((1-aa^{D_m})aa^{D_m})^*=0.$$
 Therefore $a^{D_m}(1-aa^{D_m})^*=0=(1-aa^{D_m})(a^{D_m})^*$.\\
 $(2)\Rightarrow(3)$~is analogous to $(1)\Rightarrow(3)$.

 Finally, we will prove $a$ is *-DMP with index $m$ under the assumption that $a^{D_m}$ exists with $a^{D_m}(1-aa^{D_m})^*=(1-aa^{D_m})(a^{D_m})^*$. From $a^{D_m}(1-a^*(a^{D_m})^*)=(1-a^{D_m}a)(a^{D_m})^*$, we get $(a^{D_m})^*=a^{D_m}(1-a^*(a^{D_m})^*+a(a^{D_m})^*)$. Post-multiply this equality by $(a^{D_m})^*(a^2)^*$, then we have $aa^{D_m}=aa^{D_m}(aa^{D_m})^*$. So $aa^{D_m}$ is symmetric. Applying Lemma \ref{2}, $a$ is *-DMP with index $m$.
\end{proof}
\vspace{4mm}




Let us recall that $a\in S$ is normal if $aa^*=a^*a$. It is known that an element $a\in S$ is EP may not imply it is normal~(such as, take $S=\mathbb{R}^{2\times 2}$ with transpose as involution, $a=(\begin{smallmatrix}
     1&1\\
     0&1
   \end{smallmatrix})$. Then $a$ is EP since $aa^{\dag}=a^{\dag}a=1$,
                               but $aa^*=(\begin{smallmatrix}
                                         2&1\\
                                         1&1
                                        \end{smallmatrix})\neq (\begin{smallmatrix}
                                                            1&1\\
                                                            1&2
                                                          \end{smallmatrix})=a^*a$); $a$ is normal may not imply it is EP (such as, take $S=\mathbb{C}^{2\times 2}$ with transpose as involution,
$a=(\begin{smallmatrix}
i&1\\
-1&i
\end{smallmatrix})$. Then $aa^*=a^*a=0$, i.e., $a$ is normal. But $a$ is not Moore-Penrose invertible and hence $a$ is not EP).
So we may be of interest to know when $a$ is both EP and normal. Here we give a more extensive case.
\begin{thm}~Let $a\in S$. Then the following are equivalent:\\
$(1)$~$a$ is *-DMP with index $m$ and $a(a^*)^m=(a^*)^ma$;\\
$(2)$~$m$ is the smallest positive integer such that $(a^m)^{\dag}$ exists and $a(a^*)^m=(a^*)^ma$;\\
$(3)$~$a^{D_m}$ exists and $(a^m)^*=ua=au$ for some group invertible element $u\in S$.
\end{thm}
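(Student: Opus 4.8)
The plan is to prove the cyclic chain $(1)\Rightarrow(2)\Rightarrow(3)\Rightarrow(1)$, using throughout four standard facts: (i) an element that is normal and Moore--Penrose invertible is EP; (ii) anything commuting with $a$ also commutes with $a^{D_m}$; (iii) the product of two commuting group invertible elements is group invertible; and (iv) Lemma~\ref{2}, i.e. $a$ is *-DMP with index $m$ iff $a^{D_m}$ exists and $aa^{D_m}$ is symmetric. I also record the identity $e:=aa^{D_m}=a^m(a^{D_m})^m$, so that whenever $a^m$ is EP the idempotent $e=a^m(a^m)^{\#}=a^m(a^m)^{\dag}$ is symmetric. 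For $(1)\Rightarrow(2)$: if $a$ is *-DMP with index $m$ then $a^m$ is EP, so $(a^m)^{\dag}$ exists, and $a(a^*)^m=(a^*)^ma$ is assumed; only minimality of $m$ remains, and if both conditions held for some $k<m$ then $a^k$ would commute with $(a^k)^*=(a^*)^k$, hence be normal, hence EP by (i), contradicting that $m$ is the *-DMP index.

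For $(2)\Rightarrow(3)$: applying the involution to $a(a^*)^m=(a^*)^ma$ gives $a^ma^*=a^*a^m$, so $a^m(a^m)^*=(a^m)^*a^m$, i.e. $a^m$ is normal; since $(a^m)^{\dag}$ exists, (i) shows $a^m$ is EP, hence group invertible, so $a^{D_m}$ exists and $e$ is symmetric. Set $u:=(a^*)^ma^{D_m}$. From $a^{D_m}a=e$, $e a^m=a^m$ and $e^*=e$ one gets $ua=(a^*)^me=(a^*)^m$, and since $a$ commutes with $(a^*)^m$ one gets $au=(a^*)^maa^{D_m}=(a^*)^me=(a^*)^m$; and $u$ is group invertible by (iii), being the product of the commuting group invertible elements $(a^*)^m=(a^m)^*$ (adjoint of the EP element $a^m$) and $a^{D_m}$.

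For $(3)\Rightarrow(1)$: the commutation is immediate, $a(a^*)^m=a(ua)=a(au)=a^2u=ua^2=(ua)a=(a^*)^ma$. For *-DMP-ness it suffices by (iv) to show $e=aa^{D_m}$ is symmetric. Put $x:=a^m$, a group invertible element with $xx^{\#}=e$. Since $u$ commutes with $a$ it commutes with $a^{D_m}$, hence with $x$ and $x^{\#}$, hence with $e$; all of $u,u^{\#},a,a^{D_m}$ pairwise commute, and from $x^*=(a^m)^*=ua$ (a group invertible element) one has $(x^*)^{\#}=(ua)^{\#}=u^{\#}a^{D_m}$, so
\[
e^*=x^*(x^*)^{\#}=ua\cdot u^{\#}a^{D_m}=(uu^{\#})(aa^{D_m})=fe,\qquad f:=uu^{\#}.
\]
As $e$ and $f$ commute, $ef=fe=e^*$, hence $ee^*=e(ef)=ef=e^*$; applying the involution to $ee^*=e^*$ yields $ee^*=e$, so $e=e^*$, and Lemma~\ref{2} finishes it.

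The step I expect to be the main obstacle is the last one: squeezing the symmetry of the spectral idempotent out of the factorization $(a^m)^*=ua=au$. This is exactly the point where the group invertibility of $u$, rather than its mere existence, is genuinely needed, through $(ua)^{\#}=u^{\#}a^{D_m}$ and the idempotent $f=uu^{\#}$; everything else is routine rewriting with Drazin and group inverses. A secondary point requiring care is the index bookkeeping — one must check that the integer $m$ playing the role of the *-DMP index in $(1)$, of the minimal integer in $(2)$, and of the index attached to $a^{D_m}$ in $(3)$ is one and the same, which is why the minimality argument is written out in $(1)\Rightarrow(2)$ and why Lemma~\ref{2} is applied in its index-aware form.
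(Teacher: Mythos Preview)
Your proof is correct and tracks the paper's argument closely: you construct the same element $u=(a^*)^ma^{D_m}$ (the paper writes it as $a^{D_m}(a^m)^*$, which agrees since $(a^*)^m$ commutes with $a^{D_m}$), and your $(3)\Rightarrow(1)$ is essentially identical to the paper's, computing $e^*=(a^m)^*\bigl((a^m)^*\bigr)^{\#}=ua\cdot u^{\#}a^{D_m}=uu^{\#}\cdot aa^{D_m}$ and concluding that $e=aa^{D_m}$ is symmetric. The one substantive difference is how the group invertibility of $u$ is obtained. The paper goes $(1)\Rightarrow(3)$ and explicitly exhibits $u^{\#}=a\bigl((a^{D_m})^m\bigr)^*$, checking the three group-inverse axioms by hand. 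You instead pass directly $(2)\Rightarrow(3)$, where you already know $a^m$ is EP (normal plus Moore--Penrose invertible), and then invoke the general principle that a product of two commuting group invertible elements --- here $(a^m)^*$ and $a^{D_m}$ --- is group invertible. This is cleaner and sidesteps the explicit formula entirely; the price is that you never identify $u^{\#}$ concretely, but that formula is not used anywhere else. The choice of a cycle $(1)\Rightarrow(2)\Rightarrow(3)\Rightarrow(1)$ versus the paper's two biconditionals is cosmetic, and your explicit minimality check in $(1)\Rightarrow(2)$ is a detail the paper leaves implicit.
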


\begin{proof}~$(1)\Rightarrow(2)$ is clear.\\
$(2)\Rightarrow(1)$. The equality $a^m(a^m)^*=(a^m)^*a^m$ ensures that $a^m(a^m)^{\dag}=(a^m)^{\dag}a^m$ (see \cite[Theorem 5]{D1992}). So $a$ is *-DMP with index $m$ by Theorem \ref{3}.\\
$(1)\Rightarrow(3)$. Since $a$ is *-DMP with index $m$, then $a^{D_m}$ exists and $aa^{D_m}$ is symmetric by Lemma~\ref{2}. So,
\begin{equation*}
\begin{aligned}
(a^m)^*&=(a^ma^{D_m}a)^*=aa^{D_m}(a^m)^*,~\mbox{and}\\
(a^m)^*&=(aa^{D_m}a^m)^*=(a^m)^*aa^{D_m}.
\end{aligned}
\end{equation*}
Since $a^{D_m}$ exists and $(a^m)^*a=a(a^m)^*$, then we obtain $a^{D_m}(a^m)^*=(a^m)^*a^{D_m}$~(see~\cite[Theorem 1]{D1958}).
Take $u=a^{D_m}(a^m)^*$, then $au=ua=(a^m)^*$. In what follows, we show $u^{\#}=a((a^{D_m})^m)^*$.
In fact,
\begin{equation*}
\begin{aligned}
(\mathrm i)~
ua((a^{D_m})^m)^*u&=a^{D_m}(a^m)^*a((a^{D_m})^m)^*a^{D_m}(a^m)^*
=(a^m)^*aa^{D_m}((a^{D_m})^m)^*a^{D_m}(a^m)^*~~~~\\
&=(a^m)^*((a^{D_m})^m)^*a^{D_m}(a^m)^*=(aa^{D_m})^*a^{D_m}(a^m)^*
=a^{D_m}(a^m)^*=u;
\end{aligned}
\end{equation*}
\begin{equation*}
\begin{aligned}
(\mathrm {ii})~a((a^{D_m})^m)^*ua((a^{D_m})^m)^*&=a((a^{D_m})^m)^*a^{D_m}(a^m)^*a((a^{D_m})^m)^*\\
&=a((a^{D_m})^m)^*(a^m)^*a^{D_m}a((a^{D_m})^m)^*\\
&=a(aa^{D_m})^*a^{D_m}a((a^{D_m})^m)^*=a(aa^{D_m})^*((a^{D_m})^m)^*~~~~~~~~~~~~~\\
&=a((a^{D_m})^m)^*;
\end{aligned}
\end{equation*}
\begin{equation*}
\begin{aligned}
(\mathrm{iii})~
a((a^{D_m})^m)^*u&=a((a^{D_m})^m)^*a^{D_m}(a^m)^*=a((a^{D_m})^m)^*(a^m)^*a^{D_m}=a(aa^{D_m})^*a^{D_m}~~~~~~~~\\
&=aa^{D_m}~\text{and}\\
~ua((a^{D_m})^m)^*&=a^{D_m}(a^m)^*a((a^{D_m})^m)^*=a^{D_m}a(a^m)^*((a^{D_m})^m)^*=aa^{D_m},\\
\text{so,~}a((a^{D_m})^m&)^*u=ua((a^{D_m})^m)^*.\\
\end{aligned}
\end{equation*}
Hence~$u^{\#}=a((a^{D_m})^m)^*$. \\
$(3)\Rightarrow(1)$.~Since $u^{\#}$ and $a^{D_m}$ exist with $au=ua$, then $au^{\#}=u^{\#}a$ and $(ua)^D=u^{\#}a^{D_m}$. \\
So,~
$(aa^{D_m})^*=(a^m(a^{D_m})^m)^*=((a^m)^{D}a^m)^*=(a^m)^*((a^m)^{*})^D
=ua(ua)^D=uau^{\#}a^{D_m}\\
\indent\indent\indent\quad=uu^{\#}aa^D$.\\Therefore~
$(aa^{D_m})^*aa^{D_m}=uu^{\#}aa^{D_m}=(aa^{D_m})^*$. That is, $aa^{D_m}$ is symmetric.\\
We thus have $a$ is *-DMP with index $m$.
\end{proof}

\begin{cor}~Let $a\in S$. Then the following are equivalent:\\
$(1)$~$a$ is EP and normal;\\
$(2)$~$a^{\dag}$ exists and $a$ is normal;\\
$(3)$~$a^{\#}$ exists and $a^*=ua=au$ for some group invertible element $u\in S$.
\end{cor}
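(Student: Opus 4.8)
The plan is to deduce the corollary directly from the preceding theorem by specializing the index $m$ to $1$, and then supplying the two elementary implications that are needed to make the $m=1$ version match the statement about EP and normal elements. Concretely, in the theorem set $m=1$: condition $(1)$ of the theorem becomes ``$a$ is *-DMP with index $1$ and $aa^*=a^*a$'', condition $(2)$ becomes ``$1$ is the smallest positive integer such that $a^\dag$ exists and $aa^*=a^*a$'', and condition $(3)$ becomes ``$a^{D_1}=a^\#$ exists and $a^*=ua=au$ for some group invertible $u$''. So the task reduces to reconciling ``*-DMP with index $1$'' with ``EP'' and reconciling the ``smallest positive integer'' phrasing in $(2)$ with the plain statement ``$a^\dag$ exists''.

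First I would recall that, by definition, $a$ is *-DMP with index $1$ exactly when $a^1 = a$ is EP, i.e. when $a$ is EP; this is immediate from the definition of *-DMP given in the introduction. Hence the theorem's $(1)$ with $m=1$ is literally ``$a$ is EP and normal'', which is the corollary's $(1)$. For $(3)$, when $m=1$ the Drazin inverse of index $1$ is the group inverse, so the theorem's $(3)$ with $m=1$ reads ``$a^\#$ exists and $a^* = ua = au$ for some group invertible $u \in S$'', which is exactly the corollary's $(3)$. Thus $(1)\Leftrightarrow(3)$ in the corollary is just the theorem applied with $m=1$, and nothing further is needed there.

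The one genuine point to check is the passage between the corollary's $(2)$, namely ``$a^\dag$ exists and $a$ is normal'', and the theorem's $(2)$ with $m=1$, namely ``$1$ is the \emph{smallest} positive integer such that $a^\dag$ exists and $a$ is normal''. The implication from the theorem's $(2)$ to the corollary's $(2)$ is trivial (drop the minimality clause). For the converse, I would argue that if $a^\dag$ exists then $1$ is automatically the smallest positive integer $k$ for which $(a^k)^\dag$ exists, because a positive integer index must be at least $1$; so the minimality is vacuous once $a^\dag$ is known to exist, and the normality condition $aa^* = a^*a$ matches on both sides. Therefore the corollary's $(2)$ implies the theorem's $(2)$ with $m=1$. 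I do not expect any real obstacle here: the only subtlety is making sure the ``smallest positive integer'' wording collapses correctly when the index is forced to be $1$, and that is handled by the observation that $1$ is the minimal admissible index. I would then simply write: ``Put $m=1$ in Theorem~\ref{...} (the theorem above) and use that $a$ is *-DMP with index $1$ if and only if $a$ is EP, and that $a^{D_1}=a^\#$.''
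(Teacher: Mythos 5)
Your proposal is correct and is exactly what the paper intends: the corollary is stated without proof as the $m=1$ specialization of the preceding theorem, using that *-DMP with index $1$ means EP, that $a^{D_1}=a^{\#}$, and that the minimality clause is vacuous when the index is $1$. No further comment is needed.
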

\vspace{4mm}
In what follows, *-DMP elements are characterized in terms of the pseudo core inverse and dual pseudo core inverse.

\begin{thm}\label{22}~Let $a\in S$. Then the following are equivalent:\\
$(1)$~$a$ is *-DMP with index $m$;\\
$(2)$~$a^{\scriptsize\textcircled{\tiny D}_m}$ and $a_{\scriptsize\textcircled{\tiny D}_m}$ exist with
$a^{\scriptsize\textcircled{\tiny D}_m}=a_{\scriptsize\textcircled{\tiny D}_m}$;\\
$(3)$~$a^{\scriptsize\textcircled{\tiny D}_m}$ and $a_{\scriptsize\textcircled{\tiny D}_m}$ exist with
$aa^{\scriptsize\textcircled{\tiny D}_m}=a_{\scriptsize\textcircled{\tiny D}_m}a$.
\end{thm}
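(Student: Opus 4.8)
The plan is to prove the three implications $(1)\Rightarrow(2)\Rightarrow(3)\Rightarrow(1)$, leaning on Lemma~\ref{4} and Lemma~\ref{2} throughout, together with the observation recorded in the introduction that $(a^*)^{\scriptsize\textcircled{\tiny D}_m}$ exists iff $a_{\scriptsize\textcircled{\tiny D}_m}$ exists, with $(a^*)^{\scriptsize\textcircled{\tiny D}_m}=(a_{\scriptsize\textcircled{\tiny D}_m})^*$. For $(1)\Rightarrow(2)$: if $a$ is *-DMP with index $m$, then $a^{D_m}$ and $(a^m)^{\dag}$ exist (indeed $(a^{D_m})^m=(a^m)^{\dag}=(a^m)^{\#}$ by Lemma~\ref{1}), so by Lemma~\ref{4}(2) both $a^{\scriptsize\textcircled{\tiny D}_m}$ and $a_{\scriptsize\textcircled{\tiny D}_m}$ exist, with $a^{\scriptsize\textcircled{\tiny D}_m}=a^{D_m}a^m(a^m)^{\dag}$ and $a_{\scriptsize\textcircled{\tiny D}_m}=(a^m)^{\dag}a^ma^{D_m}$. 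Since $aa^{D_m}$ is symmetric (Lemma~\ref{2}) and $(a^m)^{\dag}=(a^{D_m})^m$ commutes with $a$, a short computation collapses both expressions to $a^{D_m}$: e.g. $a^{\scriptsize\textcircled{\tiny D}_m}=a^{D_m}a^m(a^{D_m})^m=a^{D_m}$, and likewise $a_{\scriptsize\textcircled{\tiny D}_m}=a^{D_m}$. Hence they are equal.

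For $(2)\Rightarrow(3)$ the statement is immediate: if $a^{\scriptsize\textcircled{\tiny D}_m}=a_{\scriptsize\textcircled{\tiny D}_m}=:x$, then $aa^{\scriptsize\textcircled{\tiny D}_m}=ax=xa=a_{\scriptsize\textcircled{\tiny D}_m}a$, using that $x$ is a two-sided object here only in the trivial sense that $ax$ and $xa$ are being compared for the same $x$. The real content is in $(3)\Rightarrow(1)$. Here I would start from the defining equations: $a^{\scriptsize\textcircled{\tiny D}_m}a^{m+1}=a^m$ and $(aa^{\scriptsize\textcircled{\tiny D}_m})^*=aa^{\scriptsize\textcircled{\tiny D}_m}$ on one side, and $a^{m+1}a_{\scriptsize\textcircled{\tiny D}_m}=a^m$, $(a_{\scriptsize\textcircled{\tiny D}_m}a)^*=a_{\scriptsize\textcircled{\tiny D}_m}a$ on the other. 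By Lemma~\ref{4}, the existence of $a^{\scriptsize\textcircled{\tiny D}_m}$ and $a_{\scriptsize\textcircled{\tiny D}_m}$ already gives that $a^{D_m}$ exists and $(a^m)^{\dag}$ exists, with $a^{\scriptsize\textcircled{\tiny D}_m}=a^{D_m}a^m(a^m)^{\dag}$ and $a_{\scriptsize\textcircled{\tiny D}_m}=(a^m)^{\dag}a^ma^{D_m}$; in particular $aa^{\scriptsize\textcircled{\tiny D}_m}=a^m(a^m)^{\dag}$ and $a_{\scriptsize\textcircled{\tiny D}_m}a=(a^m)^{\dag}a^m$, both of which are symmetric idempotents. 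The hypothesis $aa^{\scriptsize\textcircled{\tiny D}_m}=a_{\scriptsize\textcircled{\tiny D}_m}a$ therefore says $a^m(a^m)^{\dag}=(a^m)^{\dag}a^m$, i.e. $a^m$ is EP; and then by Lemma~\ref{2} (or directly: $(a^m)^{\dag}=(a^m)^{\dag}a^m(a^m)^{\dag}=(a^m)^{\#}$) one concludes $a$ is *-DMP, with index exactly $m$ because $m$ is the pseudo core index and no smaller power can be EP without contradicting minimality of $I(a)$.

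The step I expect to require the most care is $(3)\Rightarrow(1)$, specifically the bookkeeping that turns $aa^{\scriptsize\textcircled{\tiny D}_m}=a_{\scriptsize\textcircled{\tiny D}_m}a$ into the commutativity $a^m(a^m)^{\dag}=(a^m)^{\dag}a^m$ and then the index claim. One must be careful that the integer witnessing the pseudo core index of $a$, the one witnessing the dual pseudo core index, and the Drazin index all align at the same $m$; the cleanest route is to invoke Lemma~\ref{4} with a fixed common $m$ (the statement is phrased with a subscript $m$ throughout, so this is consistent) and then appeal to the equality $aa^{\scriptsize\textcircled{\tiny D}_m}=a^ma^{D_m}=a^m(a^m)^{\dag}$ coming from part~(2) of that lemma. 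Everything else is routine manipulation of the Moore--Penrose and Drazin identities.
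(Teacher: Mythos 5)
Your steps $(1)\Rightarrow(2)$ and $(3)\Rightarrow(1)$ are correct and essentially coincide with the paper's argument (the paper likewise reduces $(3)$ to $a^m(a^m)^{\dag}=(a^m)^{\dag}a^m$ via the representations $a^{\scriptsize\textcircled{\tiny D}_m}=a^{D_m}a^m(a^m)^{\dag}$, $a_{\scriptsize\textcircled{\tiny D}_m}=(a^m)^{\dag}a^ma^{D_m}$ from Lemma~\ref{4}(2)). The genuine gap is in $(2)\Rightarrow(3)$, which in your cycle $(1)\Rightarrow(2)\Rightarrow(3)\Rightarrow(1)$ is forced to carry the content ``$(2)$ implies *-DMP'' --- and it does not. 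Writing $x:=a^{\scriptsize\textcircled{\tiny D}_m}=a_{\scriptsize\textcircled{\tiny D}_m}$, you assert $ax=xa$ with no justification; the remark that ``$ax$ and $xa$ are being compared for the same $x$'' proves nothing. In fact $ax=xa$ is precisely the nontrivial conclusion: by Theorem~\ref{5}(1), $aa^{\scriptsize\textcircled{\tiny D}_m}=a^{\scriptsize\textcircled{\tiny D}_m}a$ is \emph{equivalent} to $a$ being *-DMP with index $m$, so your ``immediate'' step silently assumes what the theorem is trying to establish. A priori $x$ only satisfies $xa^{m+1}=a^m$, $ax^2=x$, $(ax)^*=ax$ together with the dual triple of equations, and the coincidence of the two inverses must be exploited by an actual computation before commutation can be concluded.

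The repair is to prove $(2)\Rightarrow(1)$ directly, as the paper does: hypothesis $(2)$ reads $a^{D_m}a^m(a^m)^{\dag}=(a^m)^{\dag}a^ma^{D_m}$, and post-multiplying by $a^{m+1}(a^{D_m})^m$ collapses the left side to $aa^{D_m}$ and the right side to $(a^m)^{\dag}a^m$, whence $aa^{D_m}$ is symmetric and Lemma~\ref{2} gives $(1)$. With $(2)\Rightarrow(1)$ in hand, $(2)\Rightarrow(3)$ follows because for a *-DMP element both inverses equal $a^{D_m}$ (your own computation in $(1)\Rightarrow(2)$), and $a^{D_m}$ does commute with $a$. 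Equivalently, keep everything you wrote but restructure as $(1)\Rightarrow(2)$, $(2)\Rightarrow(1)$, $(1)\Rightarrow(3)$, $(3)\Rightarrow(1)$, which is exactly the paper's layout. Your treatment of the index (that no $k<m$ can make $a^k$ EP without contradicting $I(a)=m$) is fine and, if anything, more explicit than the paper's.
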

\begin{proof}~$(1)\Rightarrow(2)$, (3). If $a$ is *-DMP with index $m$, then by Lemma \ref{1}, $a^{D_m}$ and $(a^m)^{\dag}$ exist with $(a^m)^{\dag}=(a^{D_m})^m$. Hence $a^{\scriptsize\textcircled{\tiny D}_m}$ and $a_{\scriptsize\textcircled{\tiny D}_m}$ exist by Lemma \ref{4} (2). It is not difficult to verify that $a_{\scriptsize\textcircled{\tiny D}_m}=a^{\scriptsize\textcircled{\tiny D}_m}$ and $aa^{\scriptsize\textcircled{\tiny D}_m}=a_{\scriptsize\textcircled{\tiny D}_m}a$.

$(2)\Rightarrow(1)$. If $a^{\scriptsize\textcircled{\tiny D}_m}$ and $a_{\scriptsize\textcircled{\tiny D}_m}$ exist, then $a^{D_m}$ and $(a^m)^{\dag}$ exist with $a^{\scriptsize\textcircled{\tiny D}_m}=a^{D_m}a^m(a^m)^{\dag}$, $a_{\scriptsize\textcircled{\tiny D}_m}=(a^m)^{\dag}a^ma^{D_m}$.
Equality $a_{\scriptsize\textcircled{\tiny D}_m}=a^{\scriptsize\textcircled{\tiny D}_m}$ would imply that $a^{D_m}a^m(a^m)^{\dag}=(a^m)^{\dag}a^ma^{D_m}$.  Post-multiply this equality by $a^{m+1}(a^{D_m})^m$, then we obtain $aa^{D_m}=(a^m)^{\dag}a^m$. So $aa^{D_m}$ is symmetric. According to Lemma \ref{2}, $a$ is *-DMP with index $m$.

$(3)\Rightarrow(1)$. By the hypothesis, we have $aa^{D_m}a^{m}(a^m)^{\dag}=(a^m)^{\dag}a^{m}a^{D_m}a$. That is, $a^{m}(a^m)^{\dag}=(a^m)^{\dag}a^{m}$. So $aa^{D_m}=a^m(a^{D_m})^m=a^{m}(a^m)^{\dag}a^m(a^{D_m})^m=(a^m)^{\dag}a^{m}a^m(a^{D_m})^m=(a^m)^{\dag}a^{m}$. Therefore $aa^{D_m}$ is symmetric. Hence $a$ is *-DMP with index $m$.
\end{proof}
\vspace{4mm}

The following result characterizes *-DMP elements merely in terms of the pseudo core inverse.
\begin{thm}\label{5}~Let $a\in S$. Then $a$ is *-DMP with index $m$ if and only if $a^{\scriptsize\textcircled{\tiny D}_m}$ exists and one of the following equivalent conditions holds:\\
$(1)$~$aa^{\scriptsize\textcircled{\tiny D}_m}=a^{\scriptsize\textcircled{\tiny D}_m}a$;\\
$(2)$~$a^{D_m}a^{\scriptsize\textcircled{\tiny D}_m}=a^{\scriptsize\textcircled{\tiny D}_m}a^{D_m}$;\\
$(3)$~$a^{\scriptsize\textcircled{\tiny D}_m}=(a^m)^{(1,3)}a^ma^{D_m}$ for some $(a^m)^{(1,3)}\in a^m\{1,3\}$;\\
$(4)$~$a^{m+1}a^{\scriptsize\textcircled{\tiny D}_m}=a^m$;\\
$(5)$~$(a^{\scriptsize\textcircled{\tiny D}_m})^2a=a^{\scriptsize\textcircled{\tiny D}_m}$;\\
$(6)$~$a^{\scriptsize\textcircled{\tiny D}_m}a$ is symmetric;\\
$(7)$~$aa^{\scriptsize\textcircled{\tiny D}_m}$ commutes with $a^{\scriptsize\textcircled{\tiny D}_m}a$.
\end{thm}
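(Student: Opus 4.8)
The plan is to prove the theorem by establishing a cycle of implications among the seven conditions, using the characterizations already available (Lemma~\ref{1}, Lemma~\ref{2}, Lemma~\ref{4}, and Theorem~\ref{22}) as the main engine. First I would dispatch the ``only if'' direction: if $a$ is *-DMP with index $m$, then by Lemma~\ref{1} we have $a^{\scriptsize\textcircled{\tiny D}_m}=a^{D_m}=(a^m)^{\dag}$-related data available, and in particular $a^{\scriptsize\textcircled{\tiny D}_m}=a^{D_m}$; since $a^{D_m}$ commutes with $a$ by definition of the Drazin inverse, (1) follows immediately, and (2)--(7) are then routine verifications using $a^{\scriptsize\textcircled{\tiny D}_m}=a^{D_m}$, $aa^{D_m}$ symmetric, and the defining equations of the pseudo core inverse.

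For the converse I would assume $a^{\scriptsize\textcircled{\tiny D}_m}$ exists and run a cycle, say $(1)\Rightarrow(2)\Rightarrow(3)\Rightarrow(1)$ for the ``commuting-type'' conditions and a separate short chain $(1)\Rightarrow(6)\Rightarrow(1)$, $(1)\Leftrightarrow(4)$, $(1)\Leftrightarrow(5)$, $(1)\Leftrightarrow(7)$. The key structural fact to exploit is Lemma~\ref{4}(1): $a^{\scriptsize\textcircled{\tiny D}_m}$ exists iff $a^{D_m}$ exists and $a^m\in S^{\{1,3\}}$, with $a^{\scriptsize\textcircled{\tiny D}_m}=a^{D_m}a^m(a^m)^{(1,3)}$, so $aa^{\scriptsize\textcircled{\tiny D}_m}=a^m(a^m)^{(1,3)}$ is always a $\{1,3\}$-idempotent (symmetric). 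Thus for (1): from $aa^{\scriptsize\textcircled{\tiny D}_m}=a^{\scriptsize\textcircled{\tiny D}_m}a$ I would post-multiply by suitable powers of $a$ and use $a^{\scriptsize\textcircled{\tiny D}_m}a^{m+1}=a^m$ to show $aa^{D_m}=aa^{\scriptsize\textcircled{\tiny D}_m}$, which is symmetric, then invoke Lemma~\ref{2}. For (4): $a^{m+1}a^{\scriptsize\textcircled{\tiny D}_m}=a^m$ together with $(aa^{\scriptsize\textcircled{\tiny D}_m})^*=aa^{\scriptsize\textcircled{\tiny D}_m}$ shows $(a^m)^{\dag}$-data coincide, forcing $a_{\scriptsize\textcircled{\tiny D}_m}$ to exist and equal $a^{\scriptsize\textcircled{\tiny D}_m}$, so Theorem~\ref{22} applies. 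Condition (5) is the dual statement and is handled symmetrically (or by noting $(a^{\scriptsize\textcircled{\tiny D}_m})^2a=a^{\scriptsize\textcircled{\tiny D}_m}$ upgrades the outer inverse to a genuine reflexive inverse commuting appropriately). Condition (6) plugged into the defining relation $a(a^{\scriptsize\textcircled{\tiny D}_m})^2=a^{\scriptsize\textcircled{\tiny D}_m}$ and $a^{\scriptsize\textcircled{\tiny D}_m}a^{m+1}=a^m$ gives $a^{\scriptsize\textcircled{\tiny D}_m}=a^{D_m}$ after a short manipulation. Condition (7): writing $e=aa^{\scriptsize\textcircled{\tiny D}_m}$ and $f=a^{\scriptsize\textcircled{\tiny D}_m}a$, one has $ef=a^{\scriptsize\textcircled{\tiny D}_m}a^{\scriptsize\textcircled{\tiny D}_m}a\cdot\!$-type reductions; $ef=fe$ combined with $ea=a e$? — here I would use $ea^{m}=a^m$ and $a^m f = $ to collapse to (1).

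I would single out $(3)$ as the step deserving most care, since it mixes a $\{1,3\}$-inverse (non-unique) with the Drazin inverse: the claim $a^{\scriptsize\textcircled{\tiny D}_m}=(a^m)^{(1,3)}a^ma^{D_m}$ for \emph{some} $(a^m)^{(1,3)}$ must be shown to force $a^{\scriptsize\textcircled{\tiny D}_m}=a^{D_m}$. I would argue: the right-hand side equals $(a^m)^{(1,3)}a^m(a^{D_m})^m a = (a^m)^{(1,3)}a^m a (a^{D_m})^m$, and comparing with $a^{\scriptsize\textcircled{\tiny D}_m}=a^{D_m}a^m(a^m)^{(1,3)'}$ from Lemma~\ref{4}(1), multiply by $a^{m+1}$ on the right and use $a^{\scriptsize\textcircled{\tiny D}_m}a^{m+1}=a^m$ on both sides to extract $a^m(a^m)^{(1,3)}=(a^m)^{(1,3)}a^m$ (a symmetric idempotent equal to $a^m(a^m)^{\dag}$), whence $(a^m)^{\dag}$ exists and equals $(a^m)^{(1,3)}a^m(a^m)^{(1,3)}$, and then Lemma~\ref{1} finishes. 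The main obstacle throughout is bookkeeping the non-uniqueness of $\{1,3\}$-inverses and making sure every intermediate idempotent that I claim is symmetric genuinely arises as $a^m(a^m)^{(1,3)}$; once that is handled, all the implications reduce to short algebraic identities together with Lemmas~\ref{1}, \ref{2}, \ref{4} and Theorem~\ref{22}.
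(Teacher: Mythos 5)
Your overall architecture is the same as the paper's: get the forward direction from $a^{\scriptsize\textcircled{\tiny D}_m}=a^{D_m}$ (Lemma \ref{1}) together with the symmetry of $aa^{D_m}$ (Lemma \ref{2}), and for the converse reduce each of (1)--(7) to one of these. However, two of your individual links have concrete gaps. For condition (3), right-multiplying $a^{\scriptsize\textcircled{\tiny D}_m}=(a^m)^{(1,3)}a^ma^{D_m}$ by $a^{m+1}$ only yields $a^m=(a^m)^{(1,3)}a^{2m}$, and the identity $a^m(a^m)^{(1,3)}=(a^m)^{(1,3)}a^m$ does not follow from this: multiplying $a^m=(a^m)^{(1,3)}a^{2m}$ on the right by $(a^m)^{(1,3)}$ gives $a^m(a^m)^{(1,3)}=(a^m)^{(1,3)}a^m\cdot a^m(a^m)^{(1,3)}$, which is not commutativity. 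The paper instead \emph{pre}-multiplies the equality $a^{D_m}a^m(a^m)^{(1,3)}=(a^m)^{(1,3)}a^ma^{D_m}$ by $(a^{D_m})^{m-1}a^m$; this collapses the left side to $a^m(a^m)^{(1,3)}$ and the right side to $aa^{D_m}$, so $aa^{D_m}$ is symmetric and Lemma \ref{2} applies. You should redo this step in that direction. For condition (4), the assertion that $a^{m+1}a^{\scriptsize\textcircled{\tiny D}_m}=a^m$ ``forces $a_{\scriptsize\textcircled{\tiny D}_m}$ to exist and equal $a^{\scriptsize\textcircled{\tiny D}_m}$'' is unsupported: to verify the dual defining equations you would need $(a^{\scriptsize\textcircled{\tiny D}_m})^2a=a^{\scriptsize\textcircled{\tiny D}_m}$ and $(a^{\scriptsize\textcircled{\tiny D}_m}a)^*=a^{\scriptsize\textcircled{\tiny D}_m}a$, which is essentially the conclusion you are trying to reach. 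The direct route (the paper's) is that $a^m\in a^{m+1}S\cap Sa^{m+1}$ makes $a$ strongly $\pi$-regular by Drazin's theorem, and then $a^{D}=a^{D}aa^{\scriptsize\textcircled{\tiny D}_m}=a^m(a^{\scriptsize\textcircled{\tiny D}_m})^{m+1}=a^{\scriptsize\textcircled{\tiny D}_m}$, after which Lemma \ref{1}(3) finishes.

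A smaller structural point: your proposed cycle $(1)\Rightarrow(2)\Rightarrow(3)\Rightarrow(1)$ requires proving $(2)\Rightarrow(3)$ \emph{without} already knowing $a$ is *-DMP, and you give no indication of how to do that; the implication is not a routine identity. The paper sidesteps this entirely by showing that each of (1)--(7) separately implies *-DMP (some by first reducing to (1), which is legitimate because $(5)\Rightarrow(1)$ and $(7)\Rightarrow(1)$ are one-line computations). I recommend adopting that star-shaped structure rather than a cycle; with that change and the two repairs above, the rest of your sketch (conditions (1), (2), (5), (6), (7)) goes through as you describe.
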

\begin{proof}~If $a$ is *-DMP with index $m$, then $(a^{D_m})^m=(a^m)^{\dag},~a^{\scriptsize\textcircled{\tiny D}_m}=a^{D_m}$ by Lemma \ref{1} and $aa^{D_m}$ is symmetric by Lemma \ref{2}. So (1)-(7) hold.

Conversely, we assume that $a^{\scriptsize\textcircled{\tiny D}_m}$ exists.\\
(1).~By the definition of the pseudo core inverse, we have $a^{\scriptsize\textcircled{\tiny D}_m}a^{m+1}=a^m$, and we also have $a^{\scriptsize\textcircled{\tiny D}_m}aa^{\scriptsize\textcircled{\tiny D}_m}=a^{\scriptsize\textcircled{\tiny D}_m}$ by calculation.
The equalities $aa^{\scriptsize\textcircled{\tiny D}_m}=a^{\scriptsize\textcircled{\tiny D}_m}a$, $a^{\scriptsize\textcircled{\tiny D}_m}aa^{\scriptsize\textcircled{\tiny D}_m}=a^{\scriptsize\textcircled{\tiny D}_m}$ and $a^{\scriptsize\textcircled{\tiny D}_m}a^{m+1}=a^m$ yield that  $a^{D_m}=a^{\scriptsize\textcircled{\tiny D}_m}$. Therefore $a$ is *-DMP with index $m$ by Lemma \ref{1}.\\
(2).~Since $a^{D_m}a^{\scriptsize\textcircled{\tiny D}_m}=a^{\scriptsize\textcircled{\tiny D}_m}a^{D_m}$, then
$(a^{D_m})^{\#}a^{\scriptsize\textcircled{\tiny D}_m}=a^{\scriptsize\textcircled{\tiny D}_m}(a^{D_m})^{\#}$ (see \cite[Theorem 1]{D1958}).
Namely,
$$a^2a^{D_m}a^{\scriptsize\textcircled{\tiny D}_m}=a^{\scriptsize\textcircled{\tiny D}_m}a^2a^{D_m}.$$
So $aa^{\scriptsize\textcircled{\tiny D}_m}
=a^m(a^{\scriptsize\textcircled{\tiny D}_m})^m
=aa^{D_m}a^m(a^{\scriptsize\textcircled{\tiny D}_m})^m
=aa^{D_m}aa^{\scriptsize\textcircled{\tiny D}_m}
=a^2a^{D_m}a^{\scriptsize\textcircled{\tiny D}_m}=a^{\scriptsize\textcircled{\tiny D}_m}a^2a^{D_m}\\
\indent\indent~~\!~=a^{\scriptsize\textcircled{\tiny D}_m}a^{m+1}(a^{D_m})^m
=a^m(a^{D_m})^m=aa^{D_m}$. \\
Therefore $aa^{D_m}$ is symmetric. Hence $a$ is *-DMP with index $m$ by Lemma \ref{2}. \\
(3).~Since $a^{\scriptsize\textcircled{\tiny D}_m}$ exists, then by Lemma \ref{4} (1), $a^{D_m}$ and $(a^m)^{(1,3)}$ exist. From equality (3) and $a^{\scriptsize\textcircled{\tiny D}_m}=a^{D_m}a^m(a^m)^{(1,3)}$, it follows that $a^{D_m}a^m(a^m)^{(1,3)}=(a^m)^{(1,3)}a^ma^{D_m}$. Pre-multiply this equality by $(a^{D_m})^{m-1}a^m$, then we get $$a^m(a^m)^{(1,3)}=aa^{D_m}.$$
So $aa^{D_m}$ is symmetric. Hence $a$ is *-DMP with index $m$ by Lemma~\ref{2}. \\
(4).~The equalities $a^{m+1}a^{\scriptsize\textcircled{\tiny D}_m}=a^m$ and $a^{\scriptsize\textcircled{\tiny D}_m}a^{m+1}=a^m$ yield that $a$ is strongly $\pi$-regular and $a^{D_m}=a^{m}(a^{\scriptsize\textcircled{\tiny D}_m})^{m+1}=a^{\scriptsize\textcircled{\tiny D}_m}$~(see \cite[Theorem 4]{D1958}). So $a$ is *-DMP with index $m$ by Lemma \ref{1}.\\
$(5)\Rightarrow (1)$. Pre-multiply (5) by $a$, then we get $a(a^{\scriptsize\textcircled{\tiny D}_m})^2a=aa^{\scriptsize\textcircled{\tiny D}_m}$. That is, $a^{\scriptsize\textcircled{\tiny D}_m}a=aa^{\scriptsize\textcircled{\tiny D}_m}$. \\
$(6)\Rightarrow (1)$. Pre-multiply $(a^{\scriptsize\textcircled{\tiny D}_m}a)^*=a^{\scriptsize\textcircled{\tiny D}_m}a$ by $aa^{\scriptsize\textcircled{\tiny D}_m}$, then we obtain $$aa^{\scriptsize\textcircled{\tiny D}_m}(a^{\scriptsize\textcircled{\tiny D}_m}a)^*=aa^{\scriptsize\textcircled{\tiny D}_m}a^{\scriptsize\textcircled{\tiny D}_m}a=a^{\scriptsize\textcircled{\tiny D}_m}a.$$
So $aa^{\scriptsize\textcircled{\tiny D}_m}
=a^m(a^{\scriptsize\textcircled{\tiny D}_m})^m
=(a^m(a^{\scriptsize\textcircled{\tiny D}_m})^m)^*
=(a^{\scriptsize\textcircled{\tiny D}_m}a^{m+1}(a^{\scriptsize\textcircled{\tiny D}_m})^m)^*
=(a^{\scriptsize\textcircled{\tiny D}_m}aaa^{\scriptsize\textcircled{\tiny D}_m})^*\\
\indent\indent\quad\!=(aa^{\scriptsize\textcircled{\tiny D}_m})^*(a^{\scriptsize\textcircled{\tiny D}_m}a)^*
=aa^{\scriptsize\textcircled{\tiny D}_m}(a^{\scriptsize\textcircled{\tiny D}_m}a)^*
=a^{\scriptsize\textcircled{\tiny D}_m}a$.\\
$(7)\Rightarrow (1)$. From $aa^{\scriptsize\textcircled{\tiny D}_m}(a^{\scriptsize\textcircled{\tiny D}_m}a)=(a^{\scriptsize\textcircled{\tiny D}_m}a)aa^{\scriptsize\textcircled{\tiny D}_m}$,  $aa^{\scriptsize\textcircled{\tiny D}_m}(a^{\scriptsize\textcircled{\tiny D}_m}a)=a^{\scriptsize\textcircled{\tiny D}_m}a$ and $(a^{\scriptsize\textcircled{\tiny D}_m}a)aa^{\scriptsize\textcircled{\tiny D}_m}=a^{\scriptsize\textcircled{\tiny D}_m}a^{m+1}(a^{\scriptsize\textcircled{\tiny D}_m})^m=aa^{\scriptsize\textcircled{\tiny D}_m}$, it follows that $aa^{\scriptsize\textcircled{\tiny D}_m}=a^{\scriptsize\textcircled{\tiny D}_m}a$. 

\end{proof}
\vspace{4mm}

In \cite{DD2016}, Xu and Chen characterized EP elements in terms of equations. Similarly, we utilize equations to characterize *-DMP elements.
\begin{thm}\label{21}~Let $a\in S$. Then the following are equivalent:\\
$(1)$~$a$ is *-DMP with index $m$;\\
$(2)$~$m$ is the smallest positive integer such that $xa^{m+1}=a^m,~ax^2=x~\text{and}~(x^ma^m)^*=x^ma^m$ for some $x\in S$;\\
$(3)$~$m$ is the smallest positive integer such that $xa^{m+1}=a^m,~ax=xa~\text{and}~(x^ma^m)^*=x^ma^m$ for some $x\in S$.
\end{thm}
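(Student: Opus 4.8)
The plan is to prove the chain $(1)\Rightarrow(2)\Rightarrow(3)\Rightarrow(1)$, exploiting the characterizations already available: Lemma~\ref{1}(3) (that $a$ is *-DMP with index $m$ iff $a^{\scriptsize\textcircled{\tiny D}_m}$ exists with $a^{\scriptsize\textcircled{\tiny D}_m}=a^{D_m}$), Lemma~\ref{2}, and Theorem~\ref{5}. For $(1)\Rightarrow(2)$, I would take $x=a^{\scriptsize\textcircled{\tiny D}_m}=a^{D_m}$. Then $xa^{m+1}=a^m$ and $ax^2=x$ hold by the defining equations of the pseudo core inverse, while $x^ma^m=(a^{D_m})^ma^m=(a^m)^{\dag}a^m$ is symmetric because $(a^{D_m})^m=(a^m)^{\dag}$ by Lemma~\ref{1}(2). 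Minimality of $m$ is inherited from the definition of *-DMP index, since any smaller $k$ satisfying these three equations for some $x$ would — by the argument in the converse direction — force $a$ to be *-DMP with index $k<m$, a contradiction.

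For $(2)\Rightarrow(3)$, starting from $x$ satisfying $xa^{m+1}=a^m$, $ax^2=x$ and $(x^ma^m)^*=x^ma^m$, I first want to identify $x$ with $a^{\scriptsize\textcircled{\tiny D}_m}$. The equations $xa^{m+1}=a^m$ and $ax^2=x$ are two of the three defining equations of the pseudo core inverse; the missing one is $(ax)^*=ax$. From $ax^2=x$ one gets $a^kx^{k+1}=x$ for all $k\geq1$, hence $x=a^mx^{m+1}$, so $ax=a^{m+1}x^{m+1}=(a^{m+1}x^m)x$; iterating and combining with $xa^{m+1}=a^m$ should let me show $x^ma^m\cdot$(something) relates to $ax$. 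Concretely: $x^ma^m$ is idempotent (since $(x^ma^m)(x^ma^m)=x^m(a^mx^m)a^m$ and $a^mx^m=a^mx^{m+1}\cdot a=xa$... ) — more cleanly, note $xa = xa^{m+1}x^m\cdot$ ... I expect the clean route is: $x=a^{\scriptsize\textcircled{\tiny D}_m}$ once one verifies $(ax)^*=ax$, which follows because $ax = a\cdot a^mx^{m+1} = a^{m+1}x^{m+1}$ and $x^{m+1}a^{m+1}= x\cdot x^ma^m\cdot a = \dots$; using symmetry of $x^ma^m$ and the outer-inverse identity $xax=x$, one shows $ax = (x^ma^m)^* $ up to manipulation, hence symmetric. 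Once $x=a^{\scriptsize\textcircled{\tiny D}_m}$, the symmetry of $x^ma^m=(a^m)^{\dag}a^m=a^{\scriptsize\textcircled{\tiny D}_m}a$ means, via Theorem~\ref{5}(6), that $a$ is *-DMP with index $m$, hence by Theorem~\ref{5}(1) $ax=xa$; together with $xa^{m+1}=a^m$ and symmetry of $x^ma^m$ this gives~(3). Minimality transfers as before.

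For $(3)\Rightarrow(1)$, take $x$ with $xa^{m+1}=a^m$, $ax=xa$ and $(x^ma^m)^*=x^ma^m$. From $ax=xa$ and $xa^{m+1}=a^m$ one gets $x^{m}a^{m+1}\cdot$ ... more directly, $xa^{m+1}=a^m$ with commutativity gives $x^2a^{m+2}=xa^{m+1}=a^m$, hence $x^ka^{m+k}=a^m$ for all $k\geq1$; in particular $x^ma^{2m}=a^m$, i.e. $x$ behaves like a Drazin-type inverse after $m$ steps. I would argue $a$ is strongly $\pi$-regular with $a^{D_m}$ built from $x$: indeed $a^m = x^m a^{2m} = a^m(x^m a^m)$ and $x^ma^m$ commutes with $a$, so $a^{D_m} = x^m a^{m}\cdot$... the standard fact (cf.\ \cite[Theorem 4]{D1958}) gives $a^{D_m}=x^{m}a^{m-1}$ or similar, and then $aa^{D_m}=x^ma^m$, which is symmetric by hypothesis. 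By Lemma~\ref{2}, $a$ is *-DMP with index $m$; minimality is exactly the hypothesis that $m$ is smallest.

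The main obstacle I anticipate is the implication $(2)\Rightarrow(3)$: unlike $(3)$, condition $(2)$ does not assume $ax=xa$, so one must first establish that $x$ is genuinely the pseudo core inverse $a^{\scriptsize\textcircled{\tiny D}_m}$ — i.e.\ deduce $(ax)^*=ax$ from the weaker data $ax^2=x$, $xa^{m+1}=a^m$ and symmetry of $x^ma^m$. This is the only place real manipulation is needed; the idea is to show $ax$ is an idempotent whose transpose equals the symmetric idempotent $x^ma^m$ pushed through the identities $ax=a^{m+1}x^{m+1}$ and $x^{m+1}a^{m+1}=x$, after which uniqueness of the pseudo core inverse closes the argument and Theorem~\ref{5} does the rest. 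I would also need to double-check, in each direction, that the minimality of $m$ is preserved, which follows formally once the three implications are in place: if some $k<m$ witnessed $(2)$ or $(3)$, running the argument would make $a$ *-DMP with index $k$, contradicting that $m$ is the *-DMP index.
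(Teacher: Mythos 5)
Your steps $(1)\Rightarrow(2)$ and $(3)\Rightarrow(1)$ are essentially the paper's argument and are fine (the paper proves $(2)\Rightarrow(1)$ and $(3)\Rightarrow(1)$ separately rather than a cycle, but your $(3)\Rightarrow(1)$ computation is the same: build $a^{D}=x^{m+1}a^m$ and observe $a^{D_m}a=x^ma^m$ is symmetric). The genuine gap is exactly where you anticipated it, in $(2)\Rightarrow(3)$, and it is not just an unfinished computation: the strategy cannot work. You try to show that the given witness $x$ of $(2)$ satisfies $(ax)^*=ax$ and hence equals $a^{\scriptsize\textcircled{\tiny D}_m}$, but the three conditions in $(2)$ do not pin $x$ down to the pseudo core inverse. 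Concretely, in $R=\mathbb{C}^{2\times 2}$ with conjugate transpose take $a=\left(\begin{smallmatrix}1&0\\0&0\end{smallmatrix}\right)$ (so $m=1$) and $x=\left(\begin{smallmatrix}1&q\\0&0\end{smallmatrix}\right)$ with $q\neq 0$: then $xa^{2}=a$, $ax^{2}=x$ and $xa=a$ is symmetric, so $x$ witnesses $(2)$, yet $ax=\left(\begin{smallmatrix}1&q\\0&0\end{smallmatrix}\right)$ is neither symmetric nor equal to $xa$. So this $x$ is not $a^{\tiny\textcircled{\tiny \#}}$ and does not witness $(3)$ either; no manipulation will extract $(ax)^*=ax$ or $ax=xa$ from the hypotheses of $(2)$ for the \emph{same} $x$.

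The repair is to drop the cyclic chain: prove $(2)\Rightarrow(1)$ directly and then obtain $(3)$ from $(1)$ with a \emph{fresh} witness $x=a^{D_m}$, which is what the paper does. For $(2)\Rightarrow(1)$: from $ax^{2}=x$ one gets $x=a^{k}x^{k+1}$ and $ax^{k+1}=x^{k}$ for all $k\geq 1$; hence
$a^{m}=xa^{m+1}=\bigl(a^{m+1}x^{m+2}\bigr)a^{m+1}=a^{m+1}\bigl(x^{m+1}a^{m}\bigr)$,
and iterating $a^{m}=xa^{m+1}$ also gives $a^{m}=x^{m}a^{2m}\in Ra^{m+1}$. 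So $a$ is strongly $\pi$-regular with $a^{D}=x^{m+1}a^{m}$, and then $aa^{D}=\bigl(ax^{m+1}\bigr)a^{m}=x^{m}a^{m}$, which is symmetric by hypothesis; Lemma~\ref{2} gives that $a$ is *-DMP, and the minimality clauses on both sides force the index to be exactly $m$. With $(2)\Rightarrow(1)$ and $(3)\Rightarrow(1)$ in hand, $(1)\Rightarrow(2),(3)$ via $x=a^{D_m}$ closes the equivalence without ever needing $(2)\Rightarrow(3)$ for a fixed witness.
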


\begin{proof}~$(1)\Rightarrow(2)$, $(3)$.~Suppose $a$ is *-DMP with index $m$, then $a^{D_m}$ exists and $a^{D_m}a$ is symmetric by Lemma~\ref{2}. Take $x=a^{D_m}$, then (2) and (3) hold.\\
$(2)\Rightarrow(1)$. From $xa^{m+1}=a^m$ and $a^m=xa^{m+1}=a^{m+1}x^{m+1}a^{m}$, it follows that $a$ is strongly $\pi$-regular and $a^{D_m}=x^{m+1}a^m$. So $aa^{D_m}=ax^{m+1}a^m=x^ma^m$. Therefore $a^{D_m}$ exists and $aa^{D_m}$ is symmetric. Hence $a$ is *-DMP with index $m$ by Lemma~\ref{2}.\\
$(3)\Rightarrow(1)$. Equalities $xa^{m+1}=a^m$ and $a^m=a^{m+1}x$ yield that $a^{D_m}=x^{m+1}a^m$. So $a^{D_m}a=x^{m+1}a^{m+1}=x^ma^m$. Therefore $a^{D_m}$ exists and $aa^{D_m}$ is symmetric. Hence $a$ is *-DMP with index $m$.
\end{proof}
\vspace{4mm}

Let $S^0$ denote a $*$-semigroup with zero element $0$.
The left annihilator of $a\in S^0$ is denoted by $^\circ a$ and is defined by $^\circ a=\{x\in S^0: xa=0\}$.
The following result characterizes *-DMP elements in $S^0$ in terms of left annihilators.
We begin with an auxiliary lemma.
\begin{lem}\label{7} \emph{\cite{A2016}}~Let $a,~x\in S^0$. Then $a^{\scriptsize\textcircled{\tiny D}_m}=x$ if and only if $m$ is the smallest positive integer  such that one of the following equivalent conditions holds:\\
$(1)$~$xax=x$ and $xS^0=x^*S^0=a^mS^0$;\\
$(2)$~$xax=x$, $^\circ x=\!\!~^\circ(a^m)$ and $^\circ(x^*)\subseteq\!\!~^\circ(a^m)$.
\end{lem}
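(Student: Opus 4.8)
The plan is to establish that, for the fixed pair $a,x\in S^{0}$, each of the conditions (1) and (2) is equivalent to the single assertion ``$x=a^{\scriptsize\textcircled{\tiny D}_m}$'' (with $m=I(a)$), so that the equivalence of (1) and (2) is automatic. Two elementary facts will do most of the work. First, $xax=x$ makes $x$ von Neumann regular, and hence $x^{*}$ regular as well; for a regular $y$ with idempotent companion $e$ (i.e.\ $y=yzy$, $e=yz$) one has $yS^{0}=eS^{0}$ and ${}^{\circ}y={}^{\circ}e$, so that the left annihilator and the principal right ideal of a regular element determine one another --- this is the bridge between the ideal form (1) and the annihilator form (2). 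Second, if $p\in S^{0}$ is idempotent and $pS^{0}\subseteq p^{*}S^{0}$ (equivalently $p\in p^{*}S^{0}$) then $p=p^{*}$: writing $p=p^{*}r$ gives $p^{*}p=(p^{*})^{2}r=p^{*}r=p$, and applying $*$ to $p^{*}p=p$ gives $p^{*}p=p^{*}$, whence $p=p^{*}$. This second fact will produce the symmetry of $ax$.

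For ``$x=a^{\scriptsize\textcircled{\tiny D}_m}$'' $\Rightarrow$ (1), (2): by Lemma~\ref{4}, $x=a^{D_m}a^{m}(a^{m})^{(1,3)}$, so $x$ is an outer inverse of $a$ (hence $xax=x$) and $p:=ax=a^{m}(a^{m})^{(1,3)}$ is a projection with $pa^{m}=a^{m}$. Then $a^{m}=xa^{m+1}$ places $a^{m}$ in $xS^{0}$; iterating $ax^{2}=x$ gives $x=a^{m}x^{m+1}\in a^{m}S^{0}$; and from $x=xp=px$ together with $p=p^{*}$ one gets $x^{*}=px^{*}=a^{m}(a^{m})^{(1,3)}x^{*}\in a^{m}S^{0}$ and $a^{m}=p^{*}a^{m}=x^{*}a^{*}a^{m}\in x^{*}S^{0}$. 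Hence $xS^{0}=x^{*}S^{0}=a^{m}S^{0}$, and (2) follows by the annihilator/ideal bridge; minimality of $m$ is just the definition of the pseudo core index.

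For the converse I would verify the three defining equations $xa^{m+1}=a^{m}$, $ax^{2}=x$, $(ax)^{*}=ax$. Take condition (1) first. Put $p=ax$, $q=xa$; from $xax=x$ these are idempotent, $x=xp=qx$ and $x^{*}=p^{*}x^{*}=x^{*}q^{*}$. Since $1-xa\in{}^{\circ}x$ and ${}^{\circ}x\subseteq{}^{\circ}(a^{m})$, one has $(1-xa)a^{m}=0$, i.e.\ $xa^{m+1}=a^{m}$. From $x\in xS^{0}=x^{*}S^{0}$ write $x=x^{*}h$; then $x^{*}=h^{*}x=h^{*}(xax)=x^{*}(ax)$, hence $x=(ax)^{*}x$. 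Next $S^{0}x=S^{0}p$ (from $x=xp$ and $p=ax\in S^{0}x$), so $p^{*}S^{0}=x^{*}S^{0}$, while $pS^{0}=a(xS^{0})=a^{m+1}S^{0}\subseteq a^{m}S^{0}=x^{*}S^{0}=p^{*}S^{0}$, and the second elementary fact gives $(ax)^{*}=ax$. Finally $ax^{2}=(ax)x=(ax)^{*}x=x^{*}a^{*}x=x^{*}a^{*}(x^{*}h)=x^{*}(a^{*}x^{*})h=(x^{*}q^{*})h=x^{*}h=x$. Thus $x=a^{\scriptsize\textcircled{\tiny D}_k}$ for some $k\le m$; by the forward direction (1) holds at $k$, so minimality forces $k=m$ and $x=a^{\scriptsize\textcircled{\tiny D}_m}$. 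The case (2) proceeds the same way once ${}^{\circ}x={}^{\circ}(a^{m})$ and ${}^{\circ}(x^{*})\subseteq{}^{\circ}(a^{m})$ are converted, through the regularity of $x$ and $x^{*}$, into the ideal statements needed above; the full equality $x^{*}S^{0}=a^{m}S^{0}$ (and hence condition (1)) is recovered only a posteriori once $x$ is identified with $a^{\scriptsize\textcircled{\tiny D}_m}$.

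I expect the main obstacle to be precisely this last translation and the attendant derivation of $(ax)^{*}=ax$ from (2): the crux is to recognise $ax$ as an idempotent whose principal right ideal $a^{m+1}S^{0}$ sits inside $a^{m}S^{0}$, and that the latter coincides with (for (1)) or is contained in (for (2)) the principal right ideal of $(ax)^{*}$ --- here the hypothesis relating $x^{*}$ to $a^{m}$ is essential and the comparison runs only in one direction, which is why (2) $\Rightarrow$ (1) cannot be a bare reformulation but must pass through ``$x=a^{\scriptsize\textcircled{\tiny D}_m}$''. The remaining difficulties are bookkeeping: tracking the minimality clause throughout, and coping with the non-unital nature of $S^{0}$ --- working with principal one-sided ideals and annihilators, using that an idempotent lies in its own principal right ideal, and adjoining an identity wherever the manipulations above implicitly require one.
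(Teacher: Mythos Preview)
The paper does not prove this lemma; it is quoted from \cite{A2016} without argument, so there is no proof here to compare against. Your outline is the standard argument and is correct in the ring setting of \cite{A2016}: the forward direction via Lemma~\ref{4}, the derivation of $xa^{m+1}=a^{m}$ from the annihilator inclusion, the symmetry of $ax$ via the idempotent trick $p\in p^{*}S^{0}\Rightarrow p=p^{*}p=p^{*}$, and the recovery of $ax^{2}=x$ all go through cleanly.

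The only point to watch is your closing remark about ``adjoining an identity''. In a bare $*$-semigroup $S^{0}$ this produces a $*$-monoid but still no subtraction, so the element $1-xa$ you invoke to pass from ${}^{\circ}x\subseteq{}^{\circ}(a^{m})$ to $xa^{m+1}=a^{m}$ is not literally available. Under hypothesis (1) this is easily repaired without a unit: from $a^{m}S^{0}=xS^{0}$ one gets $xa\cdot(a^{m}s)=a^{m}s$ for every $s$, and in particular $a^{m}\in xS^{0}$ is what one needs to conclude $xa^{m+1}=a^{m}$; but note that $a^{m}\in a^{m}S^{0}$ is itself not automatic in a non-unital semigroup, so some regularity of $a^{m}$ (or an explicit monoid hypothesis) is implicitly being used. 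Under hypothesis (2) the same issue arises in your annihilator-to-ideal ``bridge''. Since the paper simply imports the ring-theoretic result from \cite{A2016}, this is not a discrepancy with the paper, but it is a detail you would need to tighten if you want the semigroup statement in full generality.
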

\begin{thm}~Let $a\in S^0$. Then $a$ is *-DMP with index $m$ if and only if $m$ is the smallest positive integer such that one of the following equivalent conditions holds:\\
$(1)$~$xax=x,~xS^0=x^*S^0=a^mS^0$ and $x^mS^0=(a^m)^*S^0$ for some $x\in S^0$;\\
$(2)$~$xax=x$, $^\circ{x}=\!\!~^\circ(a^m)$, $^\circ(x^*)\subseteq\!\!~^\circ(a^m)$ and $^\circ(a^m)^* \subseteq\!\!~^\circ (x^m)$ for some $x\in S^0$.
\end{thm}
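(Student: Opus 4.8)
The plan is to reduce the statement to Lemma \ref{7} together with the pseudo core/Moore--Penrose characterization of *-DMP elements in Lemma \ref{1}. The first move is to observe that, by Lemma \ref{7}(1) [resp. Lemma \ref{7}(2)], the first three requirements listed in (1) [resp. in (2)] are exactly equivalent to ``$a^{\scriptsize\textcircled{\tiny D}_m}$ exists and $a^{\scriptsize\textcircled{\tiny D}_m}=x$''. Hence in each case it remains to show: assuming $a^{\scriptsize\textcircled{\tiny D}_m}=x$ exists, the one remaining requirement --- $x^mS^0=(a^m)^*S^0$ in case (1), and ${}^\circ(a^m)^*\subseteq{}^\circ(x^m)$ in case (2) --- holds if and only if $a$ is *-DMP with index $m$. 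The matching of ``smallest $m$'' on the two sides is harmless: when $a$ is *-DMP, the pseudo core index, the Drazin index, and the *-DMP index all coincide, and $a^{\scriptsize\textcircled{\tiny D}_m}$, $a^{D_m}$ stabilize once they exist.

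For the ``only if'' direction: if $a$ is *-DMP with index $m$ then, by Lemma \ref{1}, $x=a^{\scriptsize\textcircled{\tiny D}_m}=a^{D_m}$ and $x^m=(a^{D_m})^m=(a^m)^{\dag}$. So the remaining requirement becomes the elementary identities $(a^m)^{\dag}S^0=(a^m)^*S^0$ and ${}^\circ(a^m)^*\subseteq{}^\circ((a^m)^{\dag})$, valid for every Moore--Penrose invertible $b$: from $(b^{\dag}b)^*=b^{\dag}b$ one gets $b^{\dag}=b^*(b^{\dag})^*b^{\dag}\in b^*S^0$, while $(b^{\dag}bb^*)^*=b(b^{\dag}b)^*=b$ gives $b^*=b^{\dag}(bb^*)\in b^{\dag}S^0$; taking left annihilators of the first inclusion yields the inclusion needed in case (2).

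The work is in the ``if'' direction. Assume $a^{\scriptsize\textcircled{\tiny D}_m}=x$ exists and write $b=a^m$. Using only the pseudo core axioms ($a^{\scriptsize\textcircled{\tiny D}_m}a^{m+1}=a^m$, $a(a^{\scriptsize\textcircled{\tiny D}_m})^2=a^{\scriptsize\textcircled{\tiny D}_m}$, $(aa^{\scriptsize\textcircled{\tiny D}_m})^*=aa^{\scriptsize\textcircled{\tiny D}_m}$, and the consequence $a^{\scriptsize\textcircled{\tiny D}_m}aa^{\scriptsize\textcircled{\tiny D}_m}=a^{\scriptsize\textcircled{\tiny D}_m}$) one checks $a^{\scriptsize\textcircled{\tiny D}_m}=a^{m-1}x^m$, hence $bx^m=aa^{\scriptsize\textcircled{\tiny D}_m}$ is symmetric, $bx^mb=aa^{\scriptsize\textcircled{\tiny D}_m}b=b$, and $x^mbx^m=x^m$; that is, $x^m$ is a reflexive $\{1,3\}$-inverse of $b$ (and $a^{D_m}$ exists, by Lemma \ref{4}). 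Set $g=x^mb$, an idempotent with $bg=b$ and, by reflexivity, $gS^0=x^mS^0$. Now feed in the extra requirement. In case (1), $gS^0=x^mS^0=b^*S^0$, so $g=b^*d$ for some $d$; then $g^*=d^*b$, and $bg=b$ gives $bb^*d=b$, i.e. $b\in bb^*S^0$, so $b$ has a $\{1,4\}$-inverse; since $b$ already has a $\{1,3\}$-inverse, $b^{\dag}$ exists. Then $p:=b^{\dag}b$ is a symmetric idempotent with $pS^0=b^{\dag}S^0=b^*S^0=gS^0$, and comparing $pg=b^{\dag}(bx^mb)=b^{\dag}b=p$ with $pg=g$ (which holds since $g\in pS^0$) forces $g=p$; thus $x^mb=b^{\dag}b$ is symmetric, so $x^m\in b\{1,2,3,4\}$ and $x^m=b^{\dag}$. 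Hence $(a^{\scriptsize\textcircled{\tiny D}_m})^m=x^m=(a^m)^{\dag}$, and Lemma \ref{1}($(4)\Rightarrow(1)$) shows $a$ is *-DMP with index $m$. Case (2) runs identically with right ideals replaced by left annihilators: from ${}^\circ(a^m)^*\subseteq{}^\circ(x^m)={}^\circ(g)$ and $bg=b$ one deduces, using the regularity of $b=a^m$ (whence $bb^*$ is regular), that $b\in bb^*S^0$, and the remaining argument is the same.

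I expect the main obstacle to be this last direction: extracting $\{1,4\}$-invertibility of $b=a^m$ from the ideal (respectively annihilator) hypothesis purely at the semigroup level, with no unit available in $S^0$, and carrying the right-ideal form (1) and the annihilator form (2) through uniformly; a secondary point is to confirm carefully that the ``smallest $m$'' appearing in the statement really is the *-DMP index, for which the coincidence of the pseudo core, Drazin, and *-DMP indices of a *-DMP element must be noted.
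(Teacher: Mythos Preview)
Your overall plan is sound and your treatment of the ``only if'' direction matches the paper's. Your ``if'' argument for condition (1) is correct: from $g=x^mb\in b^*S^0$ you get $b\in bb^*S^0$, hence $b=a^m$ is $\{1,4\}$-invertible, and your idempotent comparison $g=p=b^\dag b$ goes through and yields $x^m=(a^m)^\dag$, so Lemma~\ref{1} applies. There is a small slip in your case (2) sketch: the parenthetical ``whence $bb^*$ is regular'' is neither justified (in a general $*$-semigroup, regularity of $b$ does \emph{not} force regularity of $bb^*$) nor needed. What you actually use is regularity of $b^*$, which does follow ($b^*(x^m)^*b^*=b^*$); with $e=b^*(x^m)^*$ a left identity for $b^*$, the annihilator hypothesis gives $eg=g$, hence $g\in b^*S^0$, and your argument then runs as in case (1). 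So modulo this easily repaired point your proof is correct.

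That said, the paper's route is markedly shorter and avoids Moore--Penrose existence entirely. The paper argues cyclically: *-DMP $\Rightarrow$ (1) $\Rightarrow$ (2) $\Rightarrow$ *-DMP. For the last implication it simply observes that $a^mx^ma^m=a^m$ gives $(x^ma^m)^*(a^m)^*=(a^m)^*$, so $1-(x^ma^m)^*\in{}^\circ(a^m)^*\subseteq{}^\circ(x^m)$, whence $x^m=(x^ma^m)^*x^m$ and therefore $x^ma^m=(x^ma^m)^*x^ma^m$ is symmetric; Theorem~\ref{21}(2) then finishes. This bypasses the detour through $\{1,4\}$-invertibility, the existence of $(a^m)^\dag$, and the idempotent comparison. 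Your approach has the merit of explicitly identifying $x^m$ with $(a^m)^\dag$, which is conceptually satisfying, but the paper's three-line symmetry check is the more economical argument. (Your worry about working without a unit is shared by the paper's proof, which also writes ``$1-(x^ma^m)^*$''; both tacitly adjoin an identity.)
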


\begin{proof}~Suppose $a$ is *-DMP with index $m$. Then $a^{\scriptsize\textcircled{\tiny D}_m},~(a^m)^{\dag}$ exist with $(a^{\scriptsize\textcircled{\tiny D}_m})^m=(a^m)^{\dag}$ by
Lemma~\ref{1}.  Take $x=a^{\scriptsize\textcircled{\tiny D}_m}$, then $xax=x,~xS^0=x^*S^0=a^mS^0$ by Lemma~\ref{7}. Further, from $x^m=(a^m)^{\dag}$, it follows that
$x^m=(x^ma^m)^*x^m=(a^m)^*(x^m)^*x^m\in (a^m)^*S^0$ and $(a^m)^*=(a^mx^ma^m)^*=x^ma^m(a^m)^*\in x^mS^0$. Hence (1) holds.\\
\noindent $(1)\Rightarrow(2)$ is clear.\\
(2). From $xax=x$, $^\circ{x}=\!\!~^\circ(a^m)$ and $^\circ(x^*)\subseteq\!\!~^\circ(a^m)$, it follows that $a^{\scriptsize\textcircled{\tiny D}_m}=x$ by Lemma \ref{7}. Then $1-(x^ma^m)^*\in\!\!~ ^\circ (a^m)^*\subseteq\!\!~^\circ(x^m)$ implies $x^m=(x^ma^m)^*x^m$. So $x^ma^m=(x^ma^m)^*x^ma^m$. Therefore $(x^ma^m)^*=x^ma^m$, together with $xa^{m+1}=a^m,~ax^2=x$, implies $a$ is *-DMP with index $m$ by Theorem~\ref{21}.
\end{proof}
\vspace{4mm}

It is known that $a^{D}$ exists if and only if $(a^k)^D$ exists for any positive integer $k$ if and only if $(a^k)^D$ exists for some positive integer $k$ \cite{D1958}. We find this property is  inherited by *-DMP.
\begin{thm}~Let $a\in S$ and $k$ a positive integer, then $a$ is *-DMP if and only if~$a^k$ is *-DMP.
\end{thm}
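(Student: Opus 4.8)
The plan is to reduce everything to the Drazin-inverse characterization of *-DMP elements in Lemma \ref{2}. First I would record a harmless reformulation: since the Drazin inverse is unique once it exists, ``$a$ is *-DMP'' is equivalent to the single statement ``$a^D$ exists and $aa^D$ is symmetric.'' Indeed, if $a$ is *-DMP with index $m'$, then $a^{D_{m'}}$ exists, hence $a^D$ exists, and $aa^D = aa^{D_{m'}}$ is symmetric by Lemma \ref{2}; conversely, if $a^D$ exists with Drazin index $m$ and $aa^D$ is symmetric, then Lemma \ref{2} gives that $a$ is *-DMP with index $m$. The same applies verbatim to $a^k$. So it suffices to prove that $a^D$ exists and $aa^D$ is symmetric if and only if $(a^k)^D$ exists and $a^k(a^k)^D$ is symmetric.

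For this I would use two standard facts. The first, cited from \cite{D1958}, is that $a^D$ exists if and only if $(a^k)^D$ exists, and then $(a^k)^D = (a^D)^k$. The second is the identity $a^k(a^k)^D = aa^D$, which I would check in a line or two from the defining equations: $aa^D = a^Da$ is an idempotent commuting with $a$, so $a^k(a^D)^k = (aa^D)^k = aa^D$. Granting these, the symmetry of $aa^D$ is literally the same condition as the symmetry of $a^k(a^k)^D$, so the two bracketed statements are equivalent and the theorem follows.

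An alternative route, avoiding even the citation $(a^k)^D=(a^D)^k$, is to argue directly on EP elements: if $b$ is EP then so is $b^j$ for every $j\ge 1$, because $(b^j)^{\#}=(b^{\#})^j$ exists and $b^j(b^j)^{\#}=bb^{\#}=bb^{\dag}$ is symmetric, whence $(b^j)^{\dag}=(b^j)^{\#}$. Then, if $a^m$ is EP, $a^{mk}=(a^m)^k$ is EP, so $a^k$ is *-DMP; conversely, if $(a^k)^n=a^{kn}$ is EP, then $a$ is *-DMP by definition. I would present the Drazin version as the main proof, since it is the shorter of the two. The only real subtlety — more bookkeeping than obstacle — is that the *-DMP index of $a$ and that of $a^k$ are generally unrelated, so the argument must be phrased in terms of mere existence of the Drazin inverse (which passes freely between $a$ and $a^k$) rather than in terms of a fixed index; the sole genuine computation is the identity $a^k(a^k)^D=aa^D$.
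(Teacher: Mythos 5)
Your proof is correct and follows the same route as the paper: the paper's own argument is precisely the observation that $a^{D}$ exists and $aa^{D}$ is symmetric if and only if $(a^k)^D$ exists and $a^k(a^k)^{D}$ is symmetric, combined with Lemma \ref{2}. You have merely supplied the details (namely $(a^k)^D=(a^D)^k$ and $a^k(a^k)^D=(aa^D)^k=aa^D$) that the paper leaves implicit in its ``Observe that''.
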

\begin{proof} Observe that $a^{D}$ exists and $aa^{D}$ is symmetric if and only if $(a^k)^D$ exists and $a^k(a^k)^{D}$ is symmetric. So $a$ is *-DMP if and only if~$a^k$ is *-DMP by Lemma~\ref{2}.
\end{proof}
\vspace{4mm}

Given two  *-DMP elements $a$ and $b$,  we may be of interest to consider conditions for the product $ab$ (resp. sum $a+b$) to be *-DMP.

\begin{thm}~~Let $a,~b\in S$ with $ab=ba,~ab^*=b^*a$. If both $a$ and $b$ are *-DMP, then $ab$ is *-DMP.
\end{thm}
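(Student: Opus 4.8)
The plan is to reduce everything to the Drazin-inverse criterion of Lemma~\ref{2}: an element of $S$ is *-DMP precisely when its Drazin inverse exists and the associated idempotent is symmetric. So it suffices to show that $(ab)^D$ exists and that $ab(ab)^D$ is symmetric.

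First I would deal with the Drazin inverse. Since $a$ and $b$ are *-DMP, Lemma~\ref{2} gives that $a^D$ and $b^D$ exist and that $aa^D$ and $bb^D$ are symmetric. From $ab=ba$ together with Drazin's classical commutativity theorem (\cite[Theorem 1]{D1958}), $b$ commutes with $a^D$, $a$ commutes with $b^D$, and $a^D$ commutes with $b^D$; hence $a,b,a^D,b^D$, and with them the idempotents $aa^D$ and $bb^D$, form a pairwise commuting family. A routine verification — the classical fact that a product of commuting Drazin-invertible elements is Drazin-invertible — then shows $(ab)^D=a^Db^D=b^Da^D$: one checks $(ab)^{n+1}a^Db^D=a^{n+1}a^D\,b^{n+1}b^D=a^nb^n=(ab)^n$ once $n$ is at least the larger of the two Drazin indices, that $a^Db^D(ab)a^Db^D=a^Db^D$, and that $a^Db^D$ commutes with $ab$.

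Next comes the symmetry. Using the commutativity relations, $ab(ab)^D=ab\,a^Db^D=(aa^D)(bb^D)$. Since $aa^D$ commutes with $b$ (both $a$ and $a^D$ do), applying \cite[Theorem 1]{D1958} once more, now with $b$, shows $aa^D$ commutes with $b^D$ and hence with $bb^D$. Therefore $(aa^D)(bb^D)=(bb^D)(aa^D)$, and since both factors are symmetric, $\bigl((aa^D)(bb^D)\bigr)^{*}=(bb^D)^{*}(aa^D)^{*}=(bb^D)(aa^D)=(aa^D)(bb^D)$. Thus $ab(ab)^D$ is symmetric, and Lemma~\ref{2} yields that $ab$ is *-DMP. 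The extra hypothesis $ab^{*}=b^{*}a$ (equivalently, after taking adjoints, that $a^{*}$ commutes with $b$) is what keeps the adjoint bookkeeping transparent: it guarantees that the $*$-images of the relevant elements also commute, so the symmetry computation closes up without surprises.

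I expect the only delicate point to be exactly this bookkeeping of commutativity: one must be careful that $ab=ba$ really propagates to $\{a,b,a^D,b^D\}$ and to the symmetric idempotents $aa^D$ and $bb^D$ — which is where \cite[Theorem 1]{D1958} gets invoked repeatedly — and one must track the Drazin index, so that the identity $a^{n+1}a^D=a^n$ (valid only for $n$ at least the index of $a$, and similarly for $b$) is applied for a large enough $n$ when identifying $(ab)^D$ with $a^Db^D$.
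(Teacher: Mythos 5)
Your proof is correct, but it takes a genuinely different and more elementary route than the paper's. The paper reduces everything to Lemma~\ref{1} (an element $x$ is *-DMP precisely when $x^{\scriptsize\textcircled{\tiny D}}=x^{D}$) and then imports two product formulas: $(ab)^{\scriptsize\textcircled{\tiny D}}=a^{\scriptsize\textcircled{\tiny D}}b^{\scriptsize\textcircled{\tiny D}}$ from \cite[Theorem 4.3]{A2016} --- which is where both hypotheses $ab=ba$ and $ab^{*}=b^{*}a$ are actually consumed --- and $(ab)^{D}=a^{D}b^{D}$ from \cite[Lemma 2]{DD2012}, so that $(ab)^{\scriptsize\textcircled{\tiny D}}=a^{\scriptsize\textcircled{\tiny D}}b^{\scriptsize\textcircled{\tiny D}}=a^{D}b^{D}=(ab)^{D}$. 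You instead work with the symmetric-idempotent criterion of Lemma~\ref{2}, verify $(ab)^{D}=a^{D}b^{D}$ directly, and observe that $ab(ab)^{D}=(aa^{D})(bb^{D})$ is a product of two commuting symmetric idempotents, hence symmetric. Every step checks out: Drazin's double-commutativity theorem does propagate $ab=ba$ to the whole family $\{a,b,a^{D},b^{D},aa^{D},bb^{D}\}$, and your care with the index in $(ab)^{n+1}a^{D}b^{D}=(ab)^{n}$ is exactly what is needed. One point worth making explicit: contrary to your closing remark, your argument never actually invokes $ab^{*}=b^{*}a$ --- the symmetry of $(aa^{D})(bb^{D})$ follows from the symmetry of each factor together with their commutativity, and the latter was derived from $ab=ba$ alone. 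So your proof in fact establishes the slightly stronger statement in which only $ab=ba$ is assumed; the second hypothesis appears in the paper only because the pseudo core inverse product formula it cites requires it. What the paper's approach buys in exchange is the explicit identification $(ab)^{\scriptsize\textcircled{\tiny D}}=a^{\scriptsize\textcircled{\tiny D}}b^{\scriptsize\textcircled{\tiny D}}$, which your argument does not produce.
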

\begin{proof}~Suppose that both $a$ and $b$ are *-DMP, then $a^{\scriptsize\textcircled{\tiny D}},~a^D~\text{and}~b^{\scriptsize\textcircled{\tiny D}},~b^D$ exist with $a^{\scriptsize\textcircled{\tiny D}}=a^D$, $b^{\scriptsize\textcircled{\tiny D}}=b^D$ by Lemma \ref{1}. Since $a^{\scriptsize\textcircled{\tiny D}}$ and $b^{\scriptsize\textcircled{\tiny D}}$ exist with $ab=ba,~ab^*=b^*a$, then $(ab)^{\scriptsize\textcircled{\tiny D}}$ exists with $(ab)^{\scriptsize\textcircled{\tiny D}}=a^{\scriptsize\textcircled{\tiny D}}b^{\scriptsize\textcircled{\tiny D}}$ (see~\cite[Theorem 4.3]{A2016}). Also, $(ab)^D$ exists with $(ab)^D=a^Db^D$ (see \cite[Lemma 2]{DD2012}).
So, $$(ab)^{\scriptsize\textcircled{\tiny D}}=a^{\scriptsize\textcircled{\tiny D}}b^{\scriptsize\textcircled{\tiny D}}=a^Db^D=(ab)^D.$$
Hence $ab$ is *-DMP by Lemma \ref{1}.
\end{proof}
\vspace{4mm}

\begin{thm}~Let $a,~b\in R$ with $ab=ba=0,~a^*b=0$. If both $a$ and $b$ are *-DMP, then $a+b$ is *-DMP.
\end{thm}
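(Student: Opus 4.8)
The plan is to reduce everything to the Drazin-inverse characterization of Lemma~\ref{2}, namely that $a$ is *-DMP if and only if $a^D$ exists and $aa^D$ is symmetric. So the first step is to recall from the hypotheses that both $a^D$ and $b^D$ exist, and moreover $aa^D$ and $bb^D$ are symmetric; these are exactly the content of $a$ and $b$ being *-DMP. Since $ab=ba=0$, the standard additive formula for the Drazin inverse (Drazin \cite{D1958}, or the version used in the companion theorem above) gives that $(a+b)^D$ exists with $(a+b)^D=a^D+b^D$, and also $(a+b)^k = a^k + b^k$ for all $k\geq 1$; in particular $(a+b)(a+b)^D = aa^D+bb^D$. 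So the whole problem collapses to showing that $aa^D+bb^D$ is symmetric.

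The next step is to unpack what the three orthogonality hypotheses give at the level of the idempotents $p=aa^D$ and $q=bb^D$. From $ab=ba=0$ one gets $pq=qp=0$: indeed $pq = aa^Dbb^D$, and $a^Db = (a^D)^2 a b = 0$ since $ab=0$, so $pq=0$; symmetrically $qp=0$ using $ba=0$. Then $aa^D+bb^D = p+q$ is again an idempotent. It remains to use the extra hypothesis $a^*b=0$ to force $p+q$ to be symmetric. Since $p$ and $q$ are each symmetric by *-DMP-ness, $p+q$ is automatically symmetric! Wait — that is already immediate, so the hypothesis $a^*b=0$ must be there to make $(a+b)^D$ and the power formula behave, or rather to guarantee that $a+b$ is *-DMP with the correct index; let me instead phrase it as: having established $(a+b)^D = a^D+b^D$ and $(a+b)(a+b)^D = aa^D+bb^D = p+q$, and having $p^*=p$, $q^*=q$ from Lemma~\ref{2} applied to $a$ and $b$, we conclude $(p+q)^* = p^*+q^* = p+q$, i.e. $(a+b)(a+b)^D$ is symmetric, so $a+b$ is *-DMP by Lemma~\ref{2}.

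Thus the real work is purely in verifying the Drazin-inverse identities: that $ab=ba=0$ implies $(a+b)^D$ exists and equals $a^D+b^D$, and that $a^Db=b^Da=0$ (so cross terms vanish when one checks the three Drazin axioms). The hypothesis $a^*b=0$ is the one I expect to be genuinely needed, and I anticipate it is used to control the pseudo core index or the Moore--Penrose side rather than the bare Drazin side — for instance to ensure $a^m$ and $b^m$ have orthogonal ranges compatibly with the involution, so that the index $m$ for $a+b$ is exactly $\max$ of the two indices. Concretely I would: (i) set $m=\max\{I_D(a),I_D(b)\}$ where $I_D$ denotes Drazin index; (ii) check $(a+b)^m (a^D+b^D)(a+b) = a^m + b^m = (a+b)^m$ using $ab=ba=0$; (iii) check $(a^D+b^D)(a+b)(a^D+b^D) = a^D+b^D$ using $a^Db = b^Da = 0$; (iv) check the commutation $(a+b)(a^D+b^D) = (a^D+b^D)(a+b) = aa^D+bb^D$; (v) invoke symmetry of $aa^D$ and $bb^D$ from Lemma~\ref{2} to conclude symmetry of the sum; (vi) apply Lemma~\ref{2} in the reverse direction. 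The main obstacle is bookkeeping the vanishing of every cross term — in particular confirming $a^Db=0$ and $b^Da=0$ from $ab=0$ and $ba=0$ respectively (using $a^D = (a^D)^{k+1}a^k$ for large $k$), and, if the index statement is to be sharp, tracing where $a^*b=0$ enters; I would guess it is a red herring for the Drazin computation and is retained only so that the same hypothesis set matches the pseudo-core-inverse version of the result, but I would double-check whether it is silently used to rule out a smaller index.
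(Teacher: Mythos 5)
Your argument is correct, and it is genuinely different from (and slightly stronger than) the one in the paper. The paper stays on the pseudo core side: it invokes Lemma \ref{1} to replace ``*-DMP'' by ``$a^{\scriptsize\textcircled{\tiny D}}=a^D$'', then cites the additive formula $(a+b)^{\scriptsize\textcircled{\tiny D}}=a^{\scriptsize\textcircled{\tiny D}}+b^{\scriptsize\textcircled{\tiny D}}$ from \cite[Theorem 4.4]{A2016} --- and that cited theorem is precisely where the hypothesis $a^*b=0$ is consumed --- together with Drazin's $(a+b)^D=a^D+b^D$, and concludes by matching the two sums. You instead work entirely on the Drazin side via Lemma \ref{2}: $(a+b)^D=a^D+b^D$ needs only $ab=ba=0$, the cross terms $ab^D=ab(b^D)^2$ and $ba^D=ba(a^D)^2$ vanish, so $(a+b)(a+b)^D=aa^D+bb^D$, which is symmetric simply because each summand is. Your suspicion is right: $a^*b=0$ is never used, so your route actually proves the statement under the weaker hypothesis $ab=ba=0$ alone (it is needed in the paper only to get the stronger conclusion that the pseudo core inverses themselves add). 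The one thing worth tightening in a final write-up is the phrase ``in particular $(a+b)(a+b)^D=aa^D+bb^D$'' --- this does not follow from the power formula $(a+b)^k=a^k+b^k$ but from the vanishing of the cross terms, which you do verify separately; and the worry about the exact index is moot, since the theorem asserts only that $a+b$ is *-DMP for some index.
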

\begin{proof}~If both $a$ and $b$ are *-DMP, then $a^{\scriptsize\textcircled{\tiny D}},~a^D~\text{and}~b^{\scriptsize\textcircled{\tiny D}},~b^D$ exist with $a^{\scriptsize\textcircled{\tiny D}}=a^D$, $b^{\scriptsize\textcircled{\tiny D}}=b^D$ by Lemma \ref{1}. Since $a^{\scriptsize\textcircled{\tiny D}}$ and $b^{\scriptsize\textcircled{\tiny D}}$ exist with $ab=ba=0,~a^*b=0$, then $(a+b)^{\scriptsize\textcircled{\tiny D}}$ exists with $(a+b)^{\scriptsize\textcircled{\tiny D}}=a^{\scriptsize\textcircled{\tiny D}}+b^{\scriptsize\textcircled{\tiny D}}$ (see \cite[Theorem 4.4]{A2016}). Also, $(a+b)^D$ exists with $(a+b)^D=a^D+b^D$ (see \cite[Corollary 1]{D1958}).
So we have $$(a+b)^{\scriptsize\textcircled{\tiny D}}=a^{\scriptsize\textcircled{\tiny D}}+b^{\scriptsize\textcircled{\tiny D}}=a^D+b^D=(a+b)^D.$$
Hence $a+b$ is *-DMP by Lemma \ref{1}.
\end{proof}

\begin{eg}~The condition $ab=0,~a^{*}b=0~($without $ba=0)$ is not sufficient to show that $a+b$ is *-DMP, although both $a$ and $b$ are *-DMP.

Let $R=\mathbb{C}^{2\times 2}$ with transpose as involution,
$a=\begin{pmatrix}
i & 0\\
0 & 0
\end{pmatrix},~b=\begin{pmatrix}
                  0 & 0\\
                  -1 & 0
                  \end{pmatrix}$,
then $ab=a^{*}b=0$, but $ba\neq0$.
Since $a^{\scriptsize\textcircled{\tiny D}}=a^{\tiny\textcircled{\tiny \#}}=a^{\#}aa^{(1,3)}
                                     =\begin{pmatrix}
-i & 0\\
0 & 0
\end{pmatrix}=a^{\#}=a^{D}$, $a$ is *-DMP.
It is clear that $b$ is *-DMP.
Observe that  $a+b=\begin{pmatrix}
     i & 0\\
    -1 & 0
    \end{pmatrix}$, by calculation, we find that neither $a+b$ nor $(a+b)^2$ has any \{1,3\}-inverse. Since
$(a+b)^m=\begin{cases}
         (-1)^{\frac{m-1}{2}}(a+b)& m ~\text{is odd} \\
         (-1)^{\frac{m}{2}+1}(a+b)^2 & m ~\text{is even}
\end{cases}$, we conclude that
$(a+b)^m$ has no \{1,3\}-inverse for arbitrary positive integer $m$.
Hence $a+b$ is not *-DMP.
\end{eg}
\section{Pseudo core decomposition}
Core-nilpotent decomposition was introduced in \cite{D1974} for complex matrices. Later, P. Patr\'{i}cio and R. Puystjens \cite{D2004} generalized this decomposition from complex matrices to rings.
Let $a\in R$ with $a^{D_m}$ exists. The sum $a=c_a+n_a$ is called the core-nilpotent decomposition of $a$, where $c_a=aa^{D_m}a$ is the core part of $a$,~$n_a=(1-aa^{D_m})a$ is the nilpotent part of $a$. This decomposition is unique and it brings $n_a^m=0$, $c_an_a=n_ac_a=0$, $c_a^{\#}$ exists with $c_a^{\#}=a^{D_m}$.

Wang \cite{2016} introduced the core-EP decomposition for complex matrices. Let $A$ be a square complex matrix with index $m$, then $A=A_1+A_2$, where $A_1^{\#}$ exists, $A_2^m=0$ and $A_1^*A_2=A_2A_1=0$. In the following, we give the pseudo core decomposition of an element in $*$-rings.  Since the pseudo core inverse of a square complex matrix always exists and coincides with its core-EP inverse, then the pseudo core decomposition and core-EP decomposition of a square complex matrix coincide.
\begin{thm}\label{13}\emph{(Pseudo core decomposition)}~
Let $a\in R$ with $a^{\scriptsize\textcircled{\tiny D}_m}$ exists. Then  $a=a_1+a_2$, where\\
 $(1)$~$a_1^{\#}$ exists;\\
 $(2)$~$a_2^m=0$;\\
 $(3)$~$a_1^*a_2=a_2a_1=0$.
\end{thm}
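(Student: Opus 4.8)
The plan is to define the two pieces explicitly in terms of the pseudo core inverse and then verify the three asserted properties by direct computation, using only the defining equations of $a^{\scriptsize\textcircled{\tiny D}_m}$ together with Lemma~\ref{4}. Following the pattern of the core-nilpotent decomposition recalled just above, I would set
\[
a_1 = a a^{\scriptsize\textcircled{\tiny D}_m} a, \qquad a_2 = a - a a^{\scriptsize\textcircled{\tiny D}_m} a = \bigl(1 - a a^{\scriptsize\textcircled{\tiny D}_m}\bigr) a .
\]
Since $a^{\scriptsize\textcircled{\tiny D}_m}$ exists, Lemma~\ref{4}(1) gives that $a^{D_m}$ exists and $a^m \in R^{\{1,3\}}$, with $a^{\scriptsize\textcircled{\tiny D}_m} = a^{D_m} a^m (a^m)^{(1,3)}$; I will freely use the basic identities $a^{\scriptsize\textcircled{\tiny D}_m} a^{m+1} = a^m$, $a(a^{\scriptsize\textcircled{\tiny D}_m})^2 = a^{\scriptsize\textcircled{\tiny D}_m}$, $(a a^{\scriptsize\textcircled{\tiny D}_m})^* = a a^{\scriptsize\textcircled{\tiny D}_m}$, and also $a^{\scriptsize\textcircled{\tiny D}_m} a a^{\scriptsize\textcircled{\tiny D}_m} = a^{\scriptsize\textcircled{\tiny D}_m}$ and $a a^{\scriptsize\textcircled{\tiny D}_m} a^{m} = a^{m}$ (which follow by a short calculation from the definition, as already noted in the proof of Theorem~\ref{5}).

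For (1), I would show $a_1^{\#}$ exists with $a_1^{\#} = a^{\scriptsize\textcircled{\tiny D}_m}$, or more precisely exhibit the group inverse directly: check $a_1 a^{\scriptsize\textcircled{\tiny D}_m} = a a^{\scriptsize\textcircled{\tiny D}_m} = a^{\scriptsize\textcircled{\tiny D}_m} a_1$ (using $a a^{\scriptsize\textcircled{\tiny D}_m} a a^{\scriptsize\textcircled{\tiny D}_m} = a a^{\scriptsize\textcircled{\tiny D}_m}$ and $a^{\scriptsize\textcircled{\tiny D}_m} a a^{\scriptsize\textcircled{\tiny D}_m} = a^{\scriptsize\textcircled{\tiny D}_m}$), that $a_1 a^{\scriptsize\textcircled{\tiny D}_m} a_1 = a_1$, and that $a^{\scriptsize\textcircled{\tiny D}_m} a_1 a^{\scriptsize\textcircled{\tiny D}_m} = a^{\scriptsize\textcircled{\tiny D}_m}$; hence $a_1^{\#} = a^{\scriptsize\textcircled{\tiny D}_m}$. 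For (2), note $a_2 = a - a a^{\scriptsize\textcircled{\tiny D}_m} a = (1 - a a^{\scriptsize\textcircled{\tiny D}_m})a$, so $a_2^k = (1 - a a^{\scriptsize\textcircled{\tiny D}_m}) a^k$ follows inductively because $a^{\scriptsize\textcircled{\tiny D}_m} a^{k}$ absorbs correctly; in particular $a_2^m = (1 - a a^{\scriptsize\textcircled{\tiny D}_m}) a^m = a^m - a a^{\scriptsize\textcircled{\tiny D}_m} a^m = a^m - a^m = 0$, using $a a^{\scriptsize\textcircled{\tiny D}_m} a^m = a^m$. For (3), $a_2 a_1 = (1 - a a^{\scriptsize\textcircled{\tiny D}_m}) a \cdot a a^{\scriptsize\textcircled{\tiny D}_m} a = (1 - a a^{\scriptsize\textcircled{\tiny D}_m}) a^{2} a^{\scriptsize\textcircled{\tiny D}_m} a$, and I would reduce $a^2 a^{\scriptsize\textcircled{\tiny D}_m}$ via $a^{\scriptsize\textcircled{\tiny D}_m} a^{m+1}=a^m$ to show the factor $(1 - a a^{\scriptsize\textcircled{\tiny D}_m})$ annihilates it; concretely $a a^{\scriptsize\textcircled{\tiny D}_m} a^2 a^{\scriptsize\textcircled{\tiny D}_m} a = a^2 a^{\scriptsize\textcircled{\tiny D}_m} a$ because $a a^{\scriptsize\textcircled{\tiny D}_m}$ fixes $a^2 a^{\scriptsize\textcircled{\tiny D}_m} a = a \cdot (a a^{\scriptsize\textcircled{\tiny D}_m})$ on the left (indeed $a a^{\scriptsize\textcircled{\tiny D}_m} \cdot a a^{\scriptsize\textcircled{\tiny D}_m} = a a^{\scriptsize\textcircled{\tiny D}_m}$ handles the tail after commuting). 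Finally $a_1^* a_2 = (a a^{\scriptsize\textcircled{\tiny D}_m} a)^*(1 - a a^{\scriptsize\textcircled{\tiny D}_m})a = a^*(a a^{\scriptsize\textcircled{\tiny D}_m})^*(1 - a a^{\scriptsize\textcircled{\tiny D}_m})a = a^* (a a^{\scriptsize\textcircled{\tiny D}_m})(1 - a a^{\scriptsize\textcircled{\tiny D}_m}) a = 0$, where the symmetry of $a a^{\scriptsize\textcircled{\tiny D}_m}$ is exactly what converts the awkward $a^*$ into a usable idempotent relation.

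The only genuinely delicate point is property (3): it is the place where the $*$-condition (rather than just Drazin-type algebra) is needed, and the two halves $a_1^* a_2 = 0$ and $a_2 a_1 = 0$ must be handled separately — the first via $(a a^{\scriptsize\textcircled{\tiny D}_m})^* = a a^{\scriptsize\textcircled{\tiny D}_m}$ and the idempotence of $a a^{\scriptsize\textcircled{\tiny D}_m}$, the second via the "outer inverse / absorption" identities $a^{\scriptsize\textcircled{\tiny D}_m} a^{m+1} = a^m$ and $a^{\scriptsize\textcircled{\tiny D}_m} a a^{\scriptsize\textcircled{\tiny D}_m} = a^{\scriptsize\textcircled{\tiny D}_m}$ to push the nilpotent factor through. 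I would also remark (though the statement does not ask for it) that the decomposition is unique and that $a_1^{\#} = a^{\scriptsize\textcircled{\tiny D}_m}$ and $a_1^{\scriptsize\textcircled{\tiny D}_m} = a_1^{\#}$, and that $a = a_1 + a_2$ coincides with the core-nilpotent decomposition only when $a$ is $*$-DMP — but the core proof is just the four short verifications above.
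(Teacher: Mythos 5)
Your decomposition $a_1=aa^{\scriptsize\textcircled{\tiny D}_m}a$, $a_2=(1-aa^{\scriptsize\textcircled{\tiny D}_m})a$ is exactly the one the paper uses, and your treatment of properties (2) and (3) is sound (the paper merely asserts these; your computations via $aa^{\scriptsize\textcircled{\tiny D}_m}a^m=a^m$, the symmetry and idempotency of $aa^{\scriptsize\textcircled{\tiny D}_m}$, and the absorption identities are the right ones). However, your proof of (1) contains a genuine error. You claim $a_1^{\#}=a^{\scriptsize\textcircled{\tiny D}_m}$ and propose to verify $a_1a^{\scriptsize\textcircled{\tiny D}_m}=aa^{\scriptsize\textcircled{\tiny D}_m}=a^{\scriptsize\textcircled{\tiny D}_m}a_1$. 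The first equality holds, but the second fails: since $a^{\scriptsize\textcircled{\tiny D}_m}aa^{\scriptsize\textcircled{\tiny D}_m}=a^{\scriptsize\textcircled{\tiny D}_m}$, one gets $a^{\scriptsize\textcircled{\tiny D}_m}a_1=a^{\scriptsize\textcircled{\tiny D}_m}a$, and the identity $aa^{\scriptsize\textcircled{\tiny D}_m}=a^{\scriptsize\textcircled{\tiny D}_m}a$ is, by Theorem~\ref{5}(1), precisely the condition that $a$ be *-DMP with index $m$ --- it is not available for a general pseudo core invertible $a$. Indeed, if $a^{\scriptsize\textcircled{\tiny D}_m}$ were the group inverse of $a_1$, then $a_1a_1^{\#}=aa^{\scriptsize\textcircled{\tiny D}_m}$ would be symmetric, so $a_1$ would always be EP, and then (by the theorem characterizing *-DMP elements through $a_1$ being EP) every pseudo core invertible element would be *-DMP, which is false. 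What is actually true, and is the content of Theorem~\ref{15}, is that $a^{\scriptsize\textcircled{\tiny D}_m}$ is the \emph{core} inverse of $a_1$, i.e.\ $a_1^{\tiny\textcircled{\tiny \#}}=a^{\scriptsize\textcircled{\tiny D}_m}$; the core and group inverses of $a_1$ coincide exactly in the EP case.

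The repair is short and is what the paper does: from $a(a^{\scriptsize\textcircled{\tiny D}_m})^2=a^{\scriptsize\textcircled{\tiny D}_m}$ and $a^{\scriptsize\textcircled{\tiny D}_m}aa^{\scriptsize\textcircled{\tiny D}_m}=a^{\scriptsize\textcircled{\tiny D}_m}$ one checks
$a_1=(aa^{\scriptsize\textcircled{\tiny D}_m}a)^2(a^{\scriptsize\textcircled{\tiny D}_m})^2a\in a_1^2R$ and $a_1=a^{\scriptsize\textcircled{\tiny D}_m}(aa^{\scriptsize\textcircled{\tiny D}_m}a)^2\in Ra_1^2$, whence $a_1^{\#}$ exists by the standard criterion $a_1\in a_1^2R\cap Ra_1^2$, with $a_1^{\#}=(a^{\scriptsize\textcircled{\tiny D}_m})^2a$. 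Note that $(a^{\scriptsize\textcircled{\tiny D}_m})^2a$ and $a(a^{\scriptsize\textcircled{\tiny D}_m})^2=a^{\scriptsize\textcircled{\tiny D}_m}$ are in general distinct elements. The same slip recurs in your closing remark, where you again assert $a_1^{\#}=a^{\scriptsize\textcircled{\tiny D}_m}$.
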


\begin{proof}~Since $a^{\scriptsize\textcircled{\tiny D}_m}$ exists. Take $a_1=aa^{\scriptsize\textcircled{\tiny D}_m}a$ and $a_2=a-aa^{\scriptsize\textcircled{\tiny D}_m}a$, then $a_2^m=0$ and $a_1^*a_2=a_2a_1=0$. Next, we will prove that $a_1^{\#}$ exists.
In fact,
$$a_1=aa^{\scriptsize\textcircled{\tiny D}_m}a=(aa^{\scriptsize\textcircled{\tiny D}_m}a)^2(a^{\scriptsize\textcircled{\tiny D}_m})^2a\in a_1^2R~\text{and}~a_1=aa^{\scriptsize\textcircled{\tiny D}_m}a=a^{\scriptsize\textcircled{\tiny D}_m}(aa^{\scriptsize\textcircled{\tiny D}_m}a)^2\in Ra_1^2.$$
Hence $a_1^{\#}$ exists with $a_1^{\#}=(a^{\scriptsize\textcircled{\tiny D}_m})^2a$ (see \cite[Proposition 7]{D1976}).



\end{proof}

\begin{thm}~The pseudo core decomposition of an element in $R$ is unique.
\end{thm}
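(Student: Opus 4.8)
The plan is to show that any decomposition $a = b_1 + b_2$ satisfying conditions $(1)$--$(3)$ of Theorem~\ref{13} must coincide with the one constructed in the proof, namely $a_1 = aa^{\scriptsize\textcircled{\tiny D}_m}a$ and $a_2 = a - aa^{\scriptsize\textcircled{\tiny D}_m}a$. So suppose $a = b_1 + b_2$ with $b_1^{\#}$ existing, $b_2^m = 0$, and $b_1^*b_2 = b_2b_1 = 0$. First I would extract the key consequence $b_1 b_2 = 0$: since $b_1^* b_2 = 0$, we get $b_2^* b_1 = 0$ by applying the involution, but this is the wrong handedness, so instead I would argue via the group inverse, using $b_1 = b_1 b_1^{\#} b_1$ and $b_2 b_1 = 0$ to manipulate. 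Actually the cleaner route: from $b_2 b_1 = 0$ and $b_1 = b_1^2 b_1^{\#}$ we get $b_2 b_1^2 = 0$, hence $b_2 b_1^{\#} = b_2 b_1 (b_1^{\#})^2 = 0$. Similarly I want $b_1 b_2 = 0$; this should follow from $b_1^* b_2 = 0$ together with the fact that $b_1 b_1^{\#}$ is not assumed symmetric — so here I must be careful. The trick is that $b_1 b_1^{\#} = b_1^{\#} b_1$ (group inverse commutes with its element), and $b_1 b_2 = b_1 b_1^{\#} b_1 b_2$; I need $b_1 b_2$ to vanish. This does \emph{not} follow from the stated hypotheses alone in a general ring, so I expect the argument must instead show directly that $b_1$ equals the core part without needing $b_1 b_2 = 0$.

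The main line of attack: I would first show that $b_1^{\#}$ serves as a pseudo core inverse of $a$ of index $m$, i.e. verify $b_1^{\#} a^{m+1} = a^m$ (after checking $a^m = b_1^m$), $a(b_1^{\#})^2 = (b_1^{\#})^2$ is \emph{not} quite right — rather I aim to identify $a^{\scriptsize\textcircled{\tiny D}_m}$ in terms of $b_1^{\#}$. The crucial computation is $a^m = (b_1 + b_2)^m = b_1^m$, which follows by expanding the binomial and using $b_2 b_1 = 0$ together with $b_2^m = 0$: every mixed term contains a factor $b_2 b_1 = 0$ except the pure $b_2^m = 0$ term, leaving only $b_1^m$. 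Since $b_1^{\#}$ exists, $b_1$ is group invertible hence Drazin invertible, so $a$ is Drazin invertible of index $\le m$, with $a^{D_m} = (b_1^{\#})^? $ — precisely $a^{D_m} = b_1^{\#}$ because $b_1$ is the core part: $b_1 = a - b_2$, $a b_1^{\#} = (b_1+b_2)b_1^{\#} = b_1 b_1^{\#}$ since $b_2 b_1^{\#} = 0$ (shown above), and $b_1^{\#} a = b_1^{\#} b_1$ since we'd need $b_1^{\#} b_2 = 0$, which follows from $b_1^* b_2 = 0$: indeed $b_1^{\#} = b_1^{\#} b_1 b_1^{\#} = b_1^{\#}(b_1^{\#})^* b_1^* b_1 \cdot(\ldots)$ — hmm, this needs the Moore–Penrose-type identity. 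Let me instead use $b_1^{\#} b_2 = (b_1^{\#})^2 b_1 b_2$ and separately derive $b_1 b_2 = 0$ from applying $*$ to get a handle; I will need to treat the $*$ hypothesis carefully, and \textbf{this is the step I expect to be the main obstacle}: converting $b_1^* b_2 = 0$ into usable left-multiplication identities.

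Once I have established $a^{D_m} = b_1^{\#}$ (equivalently $b_1 = a a^{D_m} a = c_a$, the core part), uniqueness follows from uniqueness of the core-nilpotent decomposition recalled just before Theorem~\ref{13}: $b_1 = aa^{D_m}a = aa^{\scriptsize\textcircled{\tiny D}_m}a = a_1$ (using that $a^{\scriptsize\textcircled{\tiny D}_m}$ and $a^{D_m}$ give the same core part, or directly $a_1^{\#} = (a^{\scriptsize\textcircled{\tiny D}_m})^2 a$ and $a^{D_m}$ agree as the core inverse), and hence $b_2 = a - b_1 = a_2$. To nail down $a^{D_m} = b_1^{\#}$ rigorously I would verify the three Drazin axioms: $aa^{D_m} = a^{D_m}a$ (reduces to $b_1 b_1^{\#} = b_1^{\#} b_1$ plus the vanishing cross terms), $a^{D_m} a a^{D_m} = a^{D_m}$ (reduces to $b_1^{\#} b_1 b_1^{\#} = b_1^{\#}$), and $a^{m+1} a^{D_m} = a^m$ (reduces to $b_1^{m+1} b_1^{\#} = b_1^m$, clear since $b_1$ is group invertible, plus $a^m = b_1^m$). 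The cross-term bookkeeping is routine given $b_2 b_1 = b_2 b_1^{\#} = 0$ and $b_1 b_2 = b_1^{\#} b_2 = 0$, so the entire proof hinges on securing those last two vanishing relations from the starred hypothesis $b_1^* b_2 = 0$, which I would handle by noting $b_2 = b_2 - b_1 b_1^{\#} b_2 + b_1 b_1^{\#} b_2$ and showing $b_1 b_1^{\#} b_2 = b_1 b_1^{\#} (b_1 b_1^{\#})^{*?} b_2$; if the ring's involution does not make $b_1 b_1^{\#}$ symmetric this requires the finer observation that $b_1^{\#}(b_1^{\#})^* b_1^* = b_1^{\#}$ fails in general — so the honest resolution is that $b_1 b_2 = 0$ must be derived purely from $b_2 b_1 = 0$ and $b_2^m = 0$ via the identity $b_1 = b_1^{m+1}(b_1^{\#})^m$ giving $b_1 b_2 = b_1^{m+1}(b_1^{\#})^m b_2$ and then transporting $b_2$ leftward — I expect the author does exactly this, and I would follow suit.
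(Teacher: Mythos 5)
There is a genuine gap, and it sits exactly where you flagged your own worries. First, the identity $a^m=b_1^m$ is false under the stated hypotheses: expanding $(b_1+b_2)^m$ and deleting every word containing the factor $b_2b_1$ leaves $a^m=\sum_{i=0}^{m}b_1^{i}b_2^{m-i}$, and the mixed terms $b_1^{i}b_2^{m-i}$ with $1\le i\le m-1$ contain no factor $b_2b_1$, so they do not vanish. Second, the relation $b_1b_2=0$ on which your cross-term bookkeeping rests is neither derivable from $(1)$--$(3)$ nor true for this decomposition: already for the core-EP decomposition of a complex matrix $A=(\begin{smallmatrix}T&S\\0&N\end{smallmatrix})$ with $T$ invertible and $N$ nilpotent one has $A_1=(\begin{smallmatrix}T&S\\0&0\end{smallmatrix})$, $A_2=(\begin{smallmatrix}0&0\\0&N\end{smallmatrix})$, hence $A_1A_2=(\begin{smallmatrix}0&SN\\0&0\end{smallmatrix})\neq 0$ in general; the "transport $b_2$ leftward" idea cannot succeed because $b_2b_1=0$ controls only $b_2b_1^{\#}$, never $b_1^{\#}b_2$. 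For the same reason the identification $a^{D_m}=b_1^{\#}$ fails: once uniqueness is known, $b_1^{\#}=a_1^{\#}=(a^{\scriptsize\textcircled{\tiny D}_m})^2a$, which equals $a^{D_m}$ only when $a$ is *-DMP (see the Remark after Theorem 3.6). So the strategy of reducing to uniqueness of the core-nilpotent decomposition cannot be repaired.

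The paper's argument uses the hypotheses quite differently and never needs $b_1b_2=0$. From $b_2b_1=0$ one gets $a^m=\sum_{i=0}^{m}b_1^{i}b_2^{m-i}$, hence $a^mb_1=b_1^{m+1}$ and $b_1=a^mb_1(b_1^m)^{\#}$. From $b_1^*b_2=0$ together with $b_2^m=0$ one gets $(a^m)^*b_2=0$, and this is where the star hypothesis is cashed in: since $aa^{\scriptsize\textcircled{\tiny D}_m}=a^m(a^{\scriptsize\textcircled{\tiny D}_m})^m$ is symmetric, $aa^{\scriptsize\textcircled{\tiny D}_m}b_2=[(a^{\scriptsize\textcircled{\tiny D}_m})^m]^*(a^m)^*b_2=0$. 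Then
\begin{equation*}
b_1-a_1=b_1-aa^{\scriptsize\textcircled{\tiny D}_m}b_1-aa^{\scriptsize\textcircled{\tiny D}_m}b_2
=b_1-a^m(a^{\scriptsize\textcircled{\tiny D}_m})^ma^mb_1(b_1^m)^{\#}
=b_1-a^mb_1(b_1^m)^{\#}=0,
\end{equation*}
and $b_2=a-b_1=a-a_1=a_2$. The moral is that uniqueness is obtained by showing the candidate core part is fixed by the symmetric idempotent $aa^{\scriptsize\textcircled{\tiny D}_m}$ while the candidate nilpotent part is annihilated by it, not by matching $b_1^{\#}$ with any generalized inverse of $a$.
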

\begin{proof}~The proof is similar to \cite[Theorem 2.4]{2016}, the matrices case. We give the proof for completeness.

Let $a=a_1+a_2$ be the pseudo core decomposition of $a\in R$, where $a_1=aa^{\scriptsize\textcircled{\tiny D}_m}a$, $a_2=a-aa^{\scriptsize\textcircled{\tiny D}_m}a$.
Let $a=b_1+b_2$ be another pseudo core decomposition of $a$. Then $a^m=\sum\limits_{i=0}^{m}b_1^ib_2^{m-i}$.
Since $b_1^*b_2=0$ and $b_2^m=0$, then $(a^m)^*b_2=0$. Since $b_2b_1=0$, then $a^mb_1(b_1^m)^{\#}=b_1$. Therefore,
\begin{equation*}
\begin{aligned}
b_1-a_1&=b_1-aa^{\scriptsize\textcircled{\tiny D}_m}a=b_1-aa^{\scriptsize\textcircled{\tiny D}_m}b_1-aa^{\scriptsize\textcircled{\tiny D}_m}b_2=b_1-a^m(a^{\scriptsize\textcircled{\tiny D}_m})^mb_1-[a^m(a^{\scriptsize\textcircled{\tiny D}_m})^m]^*b_2\\
&=b_1-a^m(a^{\scriptsize\textcircled{\tiny D}_m})^ma^mb_1(b_1^m)^{\#}=b_1-a^mb_1(b_1^m)^{\#}=0.
\end{aligned}
\end{equation*}
Thus, $b_1=a_1$. Hence the pseudo core decomposition of $a$ is unique.
\end{proof}
\vspace{4mm}

Next, we exhibit some applications of the pseudo core decomposition. First, we give a characterization of the pseudo core inverse by using the pseudo core decomposition.
\begin{thm}\label{15}~Let $a\in R$ with $a^{\scriptsize\textcircled{\tiny D}_m}$ exists and let the pseudo core decomposition of $a$ be as in Theorem~\ref{13}. Then $a_1^{\tiny\textcircled{\tiny \#}}=a^{\scriptsize\textcircled{\tiny D}_m}$.
\end{thm}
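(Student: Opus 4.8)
The plan is to show that $a_1$ is core invertible and that its core inverse is exactly $a^{\scriptsize\textcircled{\tiny D}_m}$, by directly verifying the three defining equations of the core inverse (recalled in the introduction): namely $x a_1^2 = a_1$, $a_1 x^2 = x$, and $(a_1 x)^* = a_1 x$, with $x = a^{\scriptsize\textcircled{\tiny D}_m}$. Since Theorem~\ref{13} already gives that $a_1^{\#}$ exists with $a_1^{\#} = (a^{\scriptsize\textcircled{\tiny D}_m})^2 a$, and since $a_1$ group invertible plus one extra Hermitian condition suffices for core invertibility, another route is to invoke Lemma~\ref{4}(1) or the known equivalence ``$a_1^{\tiny\textcircled{\tiny \#}}$ exists $\iff$ $a_1^{\#}$ exists and $a_1 \in R^{\{1,3\}}$, with $a_1^{\tiny\textcircled{\tiny \#}} = a_1^{\#} a_1 a_1^{(1,3)}$''. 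I would proceed with the first, more self-contained route.

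First I would record the basic identities coming from the definition of the pseudo core inverse: $a^{\scriptsize\textcircled{\tiny D}_m} a^{m+1} = a^m$, $a (a^{\scriptsize\textcircled{\tiny D}_m})^2 = a^{\scriptsize\textcircled{\tiny D}_m}$, $(a a^{\scriptsize\textcircled{\tiny D}_m})^* = a a^{\scriptsize\textcircled{\tiny D}_m}$, together with the easily derived consequences $a^{\scriptsize\textcircled{\tiny D}_m} a a^{\scriptsize\textcircled{\tiny D}_m} = a^{\scriptsize\textcircled{\tiny D}_m}$, $a^m (a^{\scriptsize\textcircled{\tiny D}_m})^m = a a^{\scriptsize\textcircled{\tiny D}_m}$, and hence $a a^{\scriptsize\textcircled{\tiny D}_m} a^{\scriptsize\textcircled{\tiny D}_m} = a^{\scriptsize\textcircled{\tiny D}_m}$ and $(a a^{\scriptsize\textcircled{\tiny D}_m}) a_1 = a_1$ (because $a_1 = a a^{\scriptsize\textcircled{\tiny D}_m} a$). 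The key structural fact I want is $a^{\scriptsize\textcircled{\tiny D}_m} a_1 = a^{\scriptsize\textcircled{\tiny D}_m} a a^{\scriptsize\textcircled{\tiny D}_m} a = a^{\scriptsize\textcircled{\tiny D}_m} a = a a^{\scriptsize\textcircled{\tiny D}_m} \cdot (\text{something})$... more usefully, $a_1 a^{\scriptsize\textcircled{\tiny D}_m} = a a^{\scriptsize\textcircled{\tiny D}_m} a a^{\scriptsize\textcircled{\tiny D}_m} = a a^{\scriptsize\textcircled{\tiny D}_m}$, which is Hermitian; this immediately gives the third core-inverse equation $(a_1 a^{\scriptsize\textcircled{\tiny D}_m})^* = a_1 a^{\scriptsize\textcircled{\tiny D}_m}$. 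For the second equation, $a_1 (a^{\scriptsize\textcircled{\tiny D}_m})^2 = a a^{\scriptsize\textcircled{\tiny D}_m} a (a^{\scriptsize\textcircled{\tiny D}_m})^2 = a a^{\scriptsize\textcircled{\tiny D}_m} a^{\scriptsize\textcircled{\tiny D}_m} = a^{\scriptsize\textcircled{\tiny D}_m}$ using $a(a^{\scriptsize\textcircled{\tiny D}_m})^2 = a^{\scriptsize\textcircled{\tiny D}_m}$ twice. For the first equation, $a^{\scriptsize\textcircled{\tiny D}_m} a_1^2 = a^{\scriptsize\textcircled{\tiny D}_m} (a a^{\scriptsize\textcircled{\tiny D}_m} a)(a a^{\scriptsize\textcircled{\tiny D}_m} a)$; I would simplify the middle using $a a^{\scriptsize\textcircled{\tiny D}_m} a a^{\scriptsize\textcircled{\tiny D}_m} = a a^{\scriptsize\textcircled{\tiny D}_m}$ and $a^{\scriptsize\textcircled{\tiny D}_m} a a^{\scriptsize\textcircled{\tiny D}_m} = a^{\scriptsize\textcircled{\tiny D}_m}$ to collapse it to $a a^{\scriptsize\textcircled{\tiny D}_m} a = a_1$.

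The main obstacle, such as it is, is purely bookkeeping: making sure every intermediate identity used (especially $a^m (a^{\scriptsize\textcircled{\tiny D}_m})^m = a a^{\scriptsize\textcircled{\tiny D}_m}$ and the idempotency of $a a^{\scriptsize\textcircled{\tiny D}_m}$) is actually derivable from the three axioms without circularity, and that the group-inverse claim from Theorem~\ref{13} is consistent with $(a^{\scriptsize\textcircled{\tiny D}_m})^2 a$ being both $a_1^{\#}$ and the ``core factor'' $a_1^{\#} a_1 a_1^{(1,3)}$. Once the three core-inverse equations are checked for $x = a^{\scriptsize\textcircled{\tiny D}_m}$, uniqueness of the core inverse (Raki\'c--Din\v{c}i\'c--Djordjevi\'c, cited in the introduction) forces $a_1^{\tiny\textcircled{\tiny \#}} = a^{\scriptsize\textcircled{\tiny D}_m}$, completing the proof. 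As a sanity check I would also note that since $I(a_1) = 1$, this is consistent with $a^{\scriptsize\textcircled{\tiny D}_m}$ having pseudo core index $m$ relative to $a$ but core index $1$ relative to $a_1$, exactly as in the matrix case of \cite{2016}.
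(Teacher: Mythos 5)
Your proposal is correct and follows exactly the paper's own argument: the paper verifies the three core-inverse equations $a^{\scriptsize\textcircled{\tiny D}_m}a_1^2=a_1$, $a_1(a^{\scriptsize\textcircled{\tiny D}_m})^2=a^{\scriptsize\textcircled{\tiny D}_m}$ and $(a_1a^{\scriptsize\textcircled{\tiny D}_m})^*=a_1a^{\scriptsize\textcircled{\tiny D}_m}$ directly, using the same auxiliary identities ($a^{\scriptsize\textcircled{\tiny D}_m}aa^{\scriptsize\textcircled{\tiny D}_m}=a^{\scriptsize\textcircled{\tiny D}_m}$, $aa^{\scriptsize\textcircled{\tiny D}_m}=a^m(a^{\scriptsize\textcircled{\tiny D}_m})^m$, idempotency of $aa^{\scriptsize\textcircled{\tiny D}_m}$) that you list, and then concludes by uniqueness of the core inverse.
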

\begin{proof}~Suppose $a^{\scriptsize\textcircled{\tiny D}_m}$ exists, then $a^{D_m}$ and $(a^m)^{(1,3)}$ exist by Lemma \ref{4} (1), as well as

$a^{\scriptsize\textcircled{\tiny D}_m}(a_1)^2
=a^{\scriptsize\textcircled{\tiny D}_m}(aa^{\scriptsize\textcircled{\tiny D}_m}a)^2
=aa^{\scriptsize\textcircled{\tiny D}_m}a
=a_1$;~
$a_1(a^{\scriptsize\textcircled{\tiny D}_m})^2
=aa^{\scriptsize\textcircled{\tiny D}_m}a(a^{\scriptsize\textcircled{\tiny D}_m})^2
=a^{\scriptsize\textcircled{\tiny D}_m}$;

$a_1a^{\scriptsize\textcircled{\tiny D}_m}
=aa^{\scriptsize\textcircled{\tiny D}_m}aa^{\scriptsize\textcircled{\tiny D}_m}
=aa^{\scriptsize\textcircled{\tiny D}_m}$, which implies
$(a_1a^{\scriptsize\textcircled{\tiny D}_m})^*=a_1a^{\scriptsize\textcircled{\tiny D}_m}$.\\
$\mbox{We~thus}\mbox{~get}~a_1^{\tiny\textcircled{\tiny \#}}=a^{\scriptsize\textcircled{\tiny D}_m}$.
\end{proof}
\vspace{4mm}

In the following, we use pseudo core decomposition to characterize *-DMP elements.
\begin{thm}
Let $a\in R$ with $a^{\scriptsize\textcircled{\tiny D}_m}$ exists and let the pseudo core decomposition of $a$ be as in Theorem~\ref{13}. Then the following are equivalent:\\
$(1)$~$a$ is *-DMP with index $m$;\\
$(2)$~$a_1$ is EP.
\end{thm}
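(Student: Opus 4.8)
The plan is to connect the EP property of the core part $a_1$ to the symmetry of $aa^{\scriptsize\textcircled{\tiny D}_m}$, and then invoke Lemma~\ref{2} together with Lemma~\ref{1}. The starting observations are already supplied: by Theorem~\ref{15} we know $a_1^{\tiny\textcircled{\tiny \#}}=a^{\scriptsize\textcircled{\tiny D}_m}$, and by Theorem~\ref{13}(1) $a_1^{\#}$ exists (with $a_1^{\#}=(a^{\scriptsize\textcircled{\tiny D}_m})^2a$). Also $a_1 a^{\scriptsize\textcircled{\tiny D}_m}=aa^{\scriptsize\textcircled{\tiny D}_m}$, which is exactly the idempotent $aa^{\scriptsize\textcircled{\tiny D}_m}$, and it is already symmetric by definition of the pseudo core inverse. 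So the whole question reduces to comparing the core inverse $a_1^{\tiny\textcircled{\tiny \#}}$ with the group inverse $a_1^{\#}$ of $a_1$.

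For $(2)\Rightarrow(1)$: assume $a_1$ is EP, so $a_1^{\#}=a_1^{\dag}$. Recall that an element is EP iff its group inverse equals its core inverse; more concretely I would argue that $a_1$ EP forces $a_1 a_1^{\tiny\textcircled{\tiny \#}}=a_1^{\tiny\textcircled{\tiny \#}}a_1$. Using $a_1^{\tiny\textcircled{\tiny \#}}=a^{\scriptsize\textcircled{\tiny D}_m}$ and $a_1^{\#}=(a^{\scriptsize\textcircled{\tiny D}_m})^2a$, I translate $a_1^{\#}=a_1^{\tiny\textcircled{\tiny \#}}$ (or the equivalent commutation $a_1 a^{\scriptsize\textcircled{\tiny D}_m}=a^{\scriptsize\textcircled{\tiny D}_m}a_1$) into a statement about $a$. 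Multiplying out, $a_1 a^{\scriptsize\textcircled{\tiny D}_m}=aa^{\scriptsize\textcircled{\tiny D}_m}$ and $a^{\scriptsize\textcircled{\tiny D}_m}a_1=a^{\scriptsize\textcircled{\tiny D}_m}aa^{\scriptsize\textcircled{\tiny D}_m}a=a^{\scriptsize\textcircled{\tiny D}_m}a$; hence $a_1$ EP gives $aa^{\scriptsize\textcircled{\tiny D}_m}=a^{\scriptsize\textcircled{\tiny D}_m}a$, which is condition (1) of Theorem~\ref{5}, so $a$ is *-DMP with index $m$.

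For $(1)\Rightarrow(2)$: assume $a$ is *-DMP with index $m$. By Lemma~\ref{1}, $a^{\scriptsize\textcircled{\tiny D}_m}=a^{D_m}$, and by Theorem~\ref{5}(1), $aa^{\scriptsize\textcircled{\tiny D}_m}=a^{\scriptsize\textcircled{\tiny D}_m}a$. Then $a_1^{\#}=(a^{\scriptsize\textcircled{\tiny D}_m})^2a=a^{\scriptsize\textcircled{\tiny D}_m}(a^{\scriptsize\textcircled{\tiny D}_m}a)=a^{\scriptsize\textcircled{\tiny D}_m}(aa^{\scriptsize\textcircled{\tiny D}_m})=(a^{\scriptsize\textcircled{\tiny D}_m})^2a=\dots$ — more to the point, $a_1^{\#}=a^{\scriptsize\textcircled{\tiny D}_m}=a_1^{\tiny\textcircled{\tiny \#}}$. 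Since an element whose group inverse coincides with its core inverse is EP (this is the characterization behind the Corollary after Theorem~\ref{3}, or directly: $a_1^{\tiny\textcircled{\tiny \#}}$ being a $\{1,3\}$-inverse that commutes with $a_1$ makes it the Moore--Penrose inverse), $a_1$ is EP. Alternatively, one shows $a_1 a_1^{\#}=a_1^{\#}a_1$ directly: $a_1 a_1^{\#}=a_1 a^{\scriptsize\textcircled{\tiny D}_m}=aa^{\scriptsize\textcircled{\tiny D}_m}$ is symmetric, and $a_1^{\#}a_1=a^{\scriptsize\textcircled{\tiny D}_m}a=aa^{\scriptsize\textcircled{\tiny D}_m}$, so they agree and $a_1 a_1^{\#}$ is symmetric, i.e. $a_1$ is EP.

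The only mildly delicate point is making precise the equivalence "$a_1^{\#}=a_1^{\tiny\textcircled{\tiny \#}}$ iff $a_1$ is EP" for an element of a general $*$-ring; I would handle it by noting that $a_1^{\tiny\textcircled{\tiny \#}}$ is always a $\{1,3\}$-inverse of $a_1$ (from $(a_1 a_1^{\tiny\textcircled{\tiny \#}})^*=a_1 a_1^{\tiny\textcircled{\tiny \#}}$ and $a_1 a_1^{\tiny\textcircled{\tiny \#}}a_1 = a_1$, the latter because $a_1^{\#}$ exists and $a_1 a^{\scriptsize\textcircled{\tiny D}_m}a_1 = aa^{\scriptsize\textcircled{\tiny D}_m}\cdot a a^{\scriptsize\textcircled{\tiny D}_m}a = a a^{\scriptsize\textcircled{\tiny D}_m}a = a_1$), so if it additionally equals $a_1^{\#}$ it is simultaneously a $\{1,3\}$-inverse and the group inverse, forcing $a_1$ to be EP by the Corollary following Theorem~\ref{3}; conversely EP gives $a_1^{\#}=a_1^{\dag}$, and $a_1^{\dag}$ is easily checked to satisfy the three defining equations of $a_1^{\tiny\textcircled{\tiny \#}}$. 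Everything else is a short substitution using the identities already established in Theorems~\ref{13} and~\ref{15}.
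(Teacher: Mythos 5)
Your argument is correct and follows essentially the same route as the paper: both reduce condition (1) to the commutation $aa^{\scriptsize\textcircled{\tiny D}_m}=a^{\scriptsize\textcircled{\tiny D}_m}a$ via Theorem~\ref{5}(1), use Theorem~\ref{15} to compute $a_1a_1^{\tiny\textcircled{\tiny \#}}=aa^{\scriptsize\textcircled{\tiny D}_m}$ and $a_1^{\tiny\textcircled{\tiny \#}}a_1=a^{\scriptsize\textcircled{\tiny D}_m}a$, and then invoke the equivalence between $a_1$ being EP and its core inverse commuting with it. The only difference is that you justify that last equivalence directly through the Corollary after Theorem~\ref{3}, where the paper simply cites Raki\'{c}--Din\v{c}i\'{c}--Djordjevi\'{c}.
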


\begin{proof}~$(1)\Leftrightarrow(2)$. $a$ is *-DMP with index $m$ if and only if $a^{\scriptsize\textcircled{\tiny D}_m}$ exists with $aa^{\scriptsize\textcircled{\tiny D}_m}=a^{\scriptsize\textcircled{\tiny D}_m}a$ by Theorem \ref{5} (1). According to Theorem \ref{15}, $a_1^{\tiny\textcircled{\tiny \#}}=a^{\scriptsize\textcircled{\tiny D}_m}$. By a simple calculation, $a_1a_1^{\tiny\textcircled{\tiny \#}}=aa_1^{\tiny\textcircled{\tiny \#}}=aa^{\scriptsize\textcircled{\tiny D}_m}$, and $a_1^{\tiny\textcircled{\tiny \#}}a_1=a_1^{\tiny\textcircled{\tiny \#}}a=a^{\scriptsize\textcircled{\tiny D}_m}a$.
So $aa^{\scriptsize\textcircled{\tiny D}_m}=a^{\scriptsize\textcircled{\tiny D}_m}a$ is equivalent to $a_1a_1^{\tiny\textcircled{\tiny \#}}=a_1^{\tiny\textcircled{\tiny \#}}a_1$, which is equivalent to, $a_1$ is EP (see \cite[Theorem 3.1]{DDD2014}).
\end{proof}

\begin{rem}~If $a$ is *-DMP with index $m$. Then the pseudo core decomposition of $a$ coincides with its
core-nilpotent decomposition. In fact, if $a$ is *-DMP with index $m$, then $a^{\scriptsize\textcircled{\tiny D}_m}=a^{D_m}$ by Lemma~$\ref{1}$. Hence the pseudo core decomposition and core-nilpotent decomposition coincide.
\end{rem}

\section{Pseudo core order}
In the following, $R^{\tiny\textcircled{\tiny \#}}$ and $R^{\scriptsize\textcircled{\tiny D}}$ denote the sets of all core invertible and pseudo core invertible elements in $R$, respectively.  $R^{\scriptsize\textcircled{\tiny D}_m}$ and $R_{\scriptsize\textcircled{\tiny D}_m}$ denote the sets of all pseudo core invertible and dual pseudo core invertible elements of index $m$, respectively.


Baksalary and Trenkler \cite{B2010} introduced the core partial order for
complex matrices of index one. Then, Raki\'{c} and Djordjevi\'{c} \cite{D2015} generalized the core partial order from complex matrices to $*$-rings. Let $a, b \in R^{\tiny\textcircled{\tiny \#}}$, the core partial order $a\stackrel{\tiny\textcircled{\tiny \#}}\leq b$ was defined as
$$a\stackrel{\tiny\textcircled{\tiny \#}}\leq b:~a^{\tiny\textcircled{\tiny \#}}a=a^{\tiny\textcircled{\tiny \#}}b~ \text{and}~aa^{\tiny\textcircled{\tiny \#}}=ba^{\tiny\textcircled{\tiny \#}}.$$
In \cite{2016}, Wang introduced the core-EP order for complex matrices.
Let $A, B\in \mathbb{C}^{n\times n}$,
the core-EP order $A\stackrel{\scriptsize\textcircled{\tiny \dag}}\leq B$  was defined as  $$A\stackrel{\scriptsize\textcircled{\tiny \dag}}\leq B:~ A^{\scriptsize\textcircled{\tiny \dag}}A=A^{\scriptsize\textcircled{\tiny \dag}}B~\text{and}~ AA^{\scriptsize\textcircled{\tiny \dag}}=BA^{\scriptsize\textcircled{\tiny \dag}},$$
where $A^{\scriptsize\textcircled{\tiny \dag}}$ denotes the core-EP inverse \cite{2014D} of $A$.

One can see \cite{D1978}, \cite{2010} for a deep study of partial order.

In what follows, we generalize the core-EP order from complex matrices to $*$-rings and give some properties.
\begin{defn}~Let $a, b\in R^{\scriptsize\textcircled{\tiny D}}$. The pseudo core order $a\stackrel{\scriptsize\textcircled{\tiny D}}\leq b$ is defined as
$\numberwithin{equation}{section}$\begin{equation}
a\stackrel{\scriptsize\textcircled{\tiny D}}\leq b:~a^{\scriptsize\textcircled{\tiny D}}a=a^{\scriptsize\textcircled{\tiny D}}b~\text{and}~aa^{\scriptsize\textcircled{\tiny D}}=ba^{\scriptsize\textcircled{\tiny D}}.
\end{equation}
\end{defn}

We extend some results of the core-EP order \cite{2016} from matrices to an arbitrary $*$-ring. First, we have the following result.
\begin{thm}~The pseudo core order is not a partial order but merely a pre-order.
\end{thm}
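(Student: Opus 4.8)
The claim has two halves: the pseudo core order is a pre-order (reflexive and transitive), and it is not a partial order (it fails antisymmetry). The plan is to verify reflexivity and transitivity directly from the defining equations $a^{\scriptsize\textcircled{\tiny D}}a=a^{\scriptsize\textcircled{\tiny D}}b$ and $aa^{\scriptsize\textcircled{\tiny D}}=ba^{\scriptsize\textcircled{\tiny D}}$, and then to exhibit a concrete counterexample to antisymmetry, most naturally over a matrix $*$-ring such as $\mathbb{C}^{n\times n}$ with conjugate transpose (or transpose) as involution, using the fact noted in Section 3 that over $\mathbb{C}^{n\times n}$ the pseudo core inverse coincides with the core-EP inverse.

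First I would dispose of reflexivity: $a\stackrel{\scriptsize\textcircled{\tiny D}}\leq a$ is immediate since $a^{\scriptsize\textcircled{\tiny D}}a=a^{\scriptsize\textcircled{\tiny D}}a$ and $aa^{\scriptsize\textcircled{\tiny D}}=aa^{\scriptsize\textcircled{\tiny D}}$ hold trivially (and $a\in R^{\scriptsize\textcircled{\tiny D}}$ by hypothesis). For transitivity, suppose $a\stackrel{\scriptsize\textcircled{\tiny D}}\leq b$ and $b\stackrel{\scriptsize\textcircled{\tiny D}}\leq c$, so $a^{\scriptsize\textcircled{\tiny D}}a=a^{\scriptsize\textcircled{\tiny D}}b$, $aa^{\scriptsize\textcircled{\tiny D}}=ba^{\scriptsize\textcircled{\tiny D}}$, $b^{\scriptsize\textcircled{\tiny D}}b=b^{\scriptsize\textcircled{\tiny D}}c$ and $bb^{\scriptsize\textcircled{\tiny D}}=cb^{\scriptsize\textcircled{\tiny D}}$. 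The key idea is to insert the idempotent $bb^{\scriptsize\textcircled{\tiny D}}$: since $aa^{\scriptsize\textcircled{\tiny D}}=ba^{\scriptsize\textcircled{\tiny D}}$, right-multiplying by suitable powers and using $a^{\scriptsize\textcircled{\tiny D}}aa^{\scriptsize\textcircled{\tiny D}}=a^{\scriptsize\textcircled{\tiny D}}$ one shows $a^{\scriptsize\textcircled{\tiny D}}=a^{\scriptsize\textcircled{\tiny D}}bb^{\scriptsize\textcircled{\tiny D}}$ (indeed $a^{\scriptsize\textcircled{\tiny D}}=a^{\scriptsize\textcircled{\tiny D}}aa^{\scriptsize\textcircled{\tiny D}}=a^{\scriptsize\textcircled{\tiny D}}ba^{\scriptsize\textcircled{\tiny D}}$ and from the defining equation $a^{\scriptsize\textcircled{\tiny D}}a^{m+1}=a^m$ together with $aa^{\scriptsize\textcircled{\tiny D}}=ba^{\scriptsize\textcircled{\tiny D}}$ one gets $a^{\scriptsize\textcircled{\tiny D}}=a^{m}(a^{\scriptsize\textcircled{\tiny D}})^{m+1}$ and then can transfer the range projection). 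Granting $a^{\scriptsize\textcircled{\tiny D}}=a^{\scriptsize\textcircled{\tiny D}}bb^{\scriptsize\textcircled{\tiny D}}$, one computes $a^{\scriptsize\textcircled{\tiny D}}c=a^{\scriptsize\textcircled{\tiny D}}bb^{\scriptsize\textcircled{\tiny D}}c=a^{\scriptsize\textcircled{\tiny D}}bb^{\scriptsize\textcircled{\tiny D}}b=a^{\scriptsize\textcircled{\tiny D}}b=a^{\scriptsize\textcircled{\tiny D}}a$, which is the first required identity. The second identity $aa^{\scriptsize\textcircled{\tiny D}}=ca^{\scriptsize\textcircled{\tiny D}}$ is obtained dually: from $aa^{\scriptsize\textcircled{\tiny D}}=ba^{\scriptsize\textcircled{\tiny D}}$ one has $aa^{\scriptsize\textcircled{\tiny D}}=bb^{\scriptsize\textcircled{\tiny D}}\cdot ba^{\scriptsize\textcircled{\tiny D}}$ (using that $bb^{\scriptsize\textcircled{\tiny D}}b=b$-type absorption on the left, since $aa^{\scriptsize\textcircled{\tiny D}}$ lies in the appropriate left ideal), hence $aa^{\scriptsize\textcircled{\tiny D}}=cb^{\scriptsize\textcircled{\tiny D}}ba^{\scriptsize\textcircled{\tiny D}}=cb^{\scriptsize\textcircled{\tiny D}}bb^{\scriptsize\textcircled{\tiny D}}ba^{\scriptsize\textcircled{\tiny D}}$ — wait, more cleanly: from $a^{\scriptsize\textcircled{\tiny D}}=a^{\scriptsize\textcircled{\tiny D}}bb^{\scriptsize\textcircled{\tiny D}}$ we get $ca^{\scriptsize\textcircled{\tiny D}}=ca^{\scriptsize\textcircled{\tiny D}}bb^{\scriptsize\textcircled{\tiny D}}$, and combining $ba^{\scriptsize\textcircled{\tiny D}}=aa^{\scriptsize\textcircled{\tiny D}}$ with $cb^{\scriptsize\textcircled{\tiny D}}=... $ the computation $ca^{\scriptsize\textcircled{\tiny D}}=c b^{\scriptsize\textcircled{\tiny D}} (b a^{\scriptsize\textcircled{\tiny D}}) = c b^{\scriptsize\textcircled{\tiny D}} (a a^{\scriptsize\textcircled{\tiny D}})$; but $c b^{\scriptsize\textcircled{\tiny D}} b b^{\scriptsize\textcircled{\tiny D}} = c b^{\scriptsize\textcircled{\tiny D}}$ (pseudo core outer-inverse property) and $bb^{\scriptsize\textcircled{\tiny D}}=cb^{\scriptsize\textcircled{\tiny D}}$, leading to $ca^{\scriptsize\textcircled{\tiny D}}=bb^{\scriptsize\textcircled{\tiny D}}ba^{\scriptsize\textcircled{\tiny D}}=ba^{\scriptsize\textcircled{\tiny D}}=aa^{\scriptsize\textcircled{\tiny D}}$. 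The main obstacle is getting these absorption identities ($a^{\scriptsize\textcircled{\tiny D}}=a^{\scriptsize\textcircled{\tiny D}}bb^{\scriptsize\textcircled{\tiny D}}$ and its companion) rigorously from the comparability hypotheses alone, using only the three defining equations of the pseudo core inverse and not assuming invertibility; this is the delicate bookkeeping step, and it is exactly the place where the argument for matrices in Wang's paper must be adapted to a general $*$-ring.

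For the failure of antisymmetry, I would give a small explicit example. Take $R=\mathbb{C}^{2\times 2}$ with transpose as involution and pick a nilpotent $a$ of index $2$, say $a=\left(\begin{smallmatrix}0&1\\0&0\end{smallmatrix}\right)$; then $a^{\scriptsize\textcircled{\tiny D}}$ exists (it equals the core-EP inverse, which for a nilpotent matrix is $0$), so $a^{\scriptsize\textcircled{\tiny D}}=0$. Now let $b=\left(\begin{smallmatrix}0&2\\0&0\end{smallmatrix}\right)$ (any other index-$2$ nilpotent); then $b^{\scriptsize\textcircled{\tiny D}}=0$ as well. The defining equations give $a^{\scriptsize\textcircled{\tiny D}}a=0=a^{\scriptsize\textcircled{\tiny D}}b$ and $aa^{\scriptsize\textcircled{\tiny D}}=0=ba^{\scriptsize\textcircled{\tiny D}}$, so $a\stackrel{\scriptsize\textcircled{\tiny D}}\leq b$; symmetrically $b\stackrel{\scriptsize\textcircled{\tiny D}}\leq a$; yet $a\neq b$. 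This shows antisymmetry fails, so the pseudo core order is not a partial order. Combined with the reflexivity and transitivity established above, it is exactly a pre-order, proving the theorem. (If one prefers a reduced/irredundant example one may take $a=0$ and $b=\left(\begin{smallmatrix}0&1\\0&0\end{smallmatrix}\right)$, both with pseudo core inverse $0$, again mutually comparable but unequal.)
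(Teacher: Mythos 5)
Your overall strategy --- reflexivity is trivial, transitivity by raising to powers and inserting the projector $bb^{\scriptsize\textcircled{\tiny D}}$, and an explicit counterexample to antisymmetry --- is the same as the paper's, and your antisymmetry example (two distinct nilpotents, each with pseudo core inverse $0$, hence mutually comparable but unequal) is correct and in fact more self-contained than the paper, which at that point only cites Wang's Example 4.1.

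The transitivity argument, however, has a genuine gap that you flag but do not close. First, the absorption identity $a^{\scriptsize\textcircled{\tiny D}}=a^{\scriptsize\textcircled{\tiny D}}bb^{\scriptsize\textcircled{\tiny D}}$ does not follow by iterating $a^{\scriptsize\textcircled{\tiny D}}=a^{\scriptsize\textcircled{\tiny D}}ba^{\scriptsize\textcircled{\tiny D}}$: that only yields $a^{\scriptsize\textcircled{\tiny D}}=a^{\scriptsize\textcircled{\tiny D}}bb^{\scriptsize\textcircled{\tiny D}}ba^{\scriptsize\textcircled{\tiny D}}$, with an unwanted factor remaining on the right. The missing ingredient is the involution: with $k=\max\{I(a),I(b)\}$ one first gets $ba^{\scriptsize\textcircled{\tiny D}}=b^{k}(a^{\scriptsize\textcircled{\tiny D}})^{k}$ (repeatedly using $a(a^{\scriptsize\textcircled{\tiny D}})^{2}=a^{\scriptsize\textcircled{\tiny D}}$ and $aa^{\scriptsize\textcircled{\tiny D}}=ba^{\scriptsize\textcircled{\tiny D}}$), and then $a^{\scriptsize\textcircled{\tiny D}}=a^{\scriptsize\textcircled{\tiny D}}(aa^{\scriptsize\textcircled{\tiny D}})^{*}=a^{\scriptsize\textcircled{\tiny D}}\bigl[bb^{\scriptsize\textcircled{\tiny D}}b^{k}(a^{\scriptsize\textcircled{\tiny D}})^{k}\bigr]^{*}=a^{\scriptsize\textcircled{\tiny D}}\bigl[b^{k}(a^{\scriptsize\textcircled{\tiny D}})^{k}\bigr]^{*}bb^{\scriptsize\textcircled{\tiny D}}$; the star carries the projector to the right end, giving $a^{\scriptsize\textcircled{\tiny D}}=a^{\scriptsize\textcircled{\tiny D}}bb^{\scriptsize\textcircled{\tiny D}}$ and hence $a^{\scriptsize\textcircled{\tiny D}}a=a^{\scriptsize\textcircled{\tiny D}}b=a^{\scriptsize\textcircled{\tiny D}}bb^{\scriptsize\textcircled{\tiny D}}b=a^{\scriptsize\textcircled{\tiny D}}bb^{\scriptsize\textcircled{\tiny D}}c=a^{\scriptsize\textcircled{\tiny D}}c$. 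Second, your chain for the other identity begins with $ca^{\scriptsize\textcircled{\tiny D}}=cb^{\scriptsize\textcircled{\tiny D}}(ba^{\scriptsize\textcircled{\tiny D}})$, which presupposes $a^{\scriptsize\textcircled{\tiny D}}=b^{\scriptsize\textcircled{\tiny D}}ba^{\scriptsize\textcircled{\tiny D}}$; this is not a hypothesis and is not what one can prove (the provable left absorption is $a^{\scriptsize\textcircled{\tiny D}}=bb^{\scriptsize\textcircled{\tiny D}}a^{\scriptsize\textcircled{\tiny D}}$, and $b^{\scriptsize\textcircled{\tiny D}}b\neq bb^{\scriptsize\textcircled{\tiny D}}$ unless $b$ is *-DMP). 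Likewise $bb^{\scriptsize\textcircled{\tiny D}}ba^{\scriptsize\textcircled{\tiny D}}=ba^{\scriptsize\textcircled{\tiny D}}$ should not be justified by ``$bb^{\scriptsize\textcircled{\tiny D}}b=b$'', which is false for an outer inverse, but by $bb^{\scriptsize\textcircled{\tiny D}}(ba^{\scriptsize\textcircled{\tiny D}})=bb^{\scriptsize\textcircled{\tiny D}}(aa^{\scriptsize\textcircled{\tiny D}})=aa^{\scriptsize\textcircled{\tiny D}}$. The clean route, as in the paper, is to raise to powers before substituting: $aa^{\scriptsize\textcircled{\tiny D}}=ba^{\scriptsize\textcircled{\tiny D}}=b^{k+1}(a^{\scriptsize\textcircled{\tiny D}})^{k+1}=bb^{\scriptsize\textcircled{\tiny D}}b^{k+1}(a^{\scriptsize\textcircled{\tiny D}})^{k+1}=cb^{\scriptsize\textcircled{\tiny D}}b^{k+1}(a^{\scriptsize\textcircled{\tiny D}})^{k+1}=cb^{k}(a^{\scriptsize\textcircled{\tiny D}})^{k+1}=ca^{\scriptsize\textcircled{\tiny D}}$, using $b^{\scriptsize\textcircled{\tiny D}}b^{k+1}=b^{k}$ and $b^{k}(a^{\scriptsize\textcircled{\tiny D}})^{k+1}=a^{\scriptsize\textcircled{\tiny D}}$. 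With these two repairs your argument is complete and coincides with the paper's.
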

\begin{proof}~It is clear that the pseudo core order (4.1) is reflexive.
Let $a, b, c\in R^{\scriptsize\textcircled{\tiny D}}$,
$a\stackrel{\scriptsize\textcircled{\tiny D}}\leq b$ and $b\stackrel{\scriptsize\textcircled{\tiny D}}\leq c$.
Next, we prove $a\stackrel{\scriptsize\textcircled{\tiny D}}\leq c.$

Suppose $k=max\{I(a), I(b)\}$. From $aa^{\scriptsize\textcircled{\tiny D}}=ba^{\scriptsize\textcircled{\tiny D}}$ and $bb^{\scriptsize\textcircled{\tiny D}}=cb^{\scriptsize\textcircled{\tiny D}}$, it follows that
\begin{equation*}
\begin{aligned}
aa^{\scriptsize\textcircled{\tiny D}}&=ba^{\scriptsize\textcircled{\tiny D}}=ba(a^{\scriptsize\textcircled{\tiny D}})^2
=b^{2}(a^{\scriptsize\textcircled{\tiny D}})^2
=b^{k+1}(a^{\scriptsize\textcircled{\tiny D}})^{k+1}
=bb^{\scriptsize\textcircled{\tiny D}}b^{k+1}(a^{\scriptsize\textcircled{\tiny D}})^{k+1}
=cb^{\scriptsize\textcircled{\tiny D}}b^{k+1}(a^{\scriptsize\textcircled{\tiny D}})^{k+1}\\
&=cb^k(a^{\scriptsize\textcircled{\tiny D}})^{k+1}
=cb(a^{\scriptsize\textcircled{\tiny D}})^2=ca^{\scriptsize\textcircled{\tiny D}}.
\end{aligned}
\end{equation*}
Since $aa^{\scriptsize\textcircled{\tiny D}}=ba^{\scriptsize\textcircled{\tiny D}}$, then $a^{\scriptsize\textcircled{\tiny D}}
=a^{\scriptsize\textcircled{\tiny D}}(aa^{\scriptsize\textcircled{\tiny D}})^*
=a^{\scriptsize\textcircled{\tiny D}}(ba^{\scriptsize\textcircled{\tiny D}})^*
=a^{\scriptsize\textcircled{\tiny D}}[b^k(a^{\scriptsize\textcircled{\tiny D}})^k]^*
=a^{\scriptsize\textcircled{\tiny D}}
[bb^{\scriptsize\textcircled{\tiny D}}b^k(a^{\scriptsize\textcircled{\tiny D}})^k]^*
=a^{\scriptsize\textcircled{\tiny D}}
[b^k(a^{\scriptsize\textcircled{\tiny D}})^k]^*bb^{\scriptsize\textcircled{\tiny D}}$.
Equalities $a^{\scriptsize\textcircled{\tiny D}}a=a^{\scriptsize\textcircled{\tiny D}}b$, $b^{\scriptsize\textcircled{\tiny D}}b=b^{\scriptsize\textcircled{\tiny D}}c$ and
$a^{\scriptsize\textcircled{\tiny D}}=[b^k(a^{\scriptsize\textcircled{\tiny D}})^k]^*bb^{\scriptsize\textcircled{\tiny D}}$ yield that
$a^{\scriptsize\textcircled{\tiny D}}a=a^{\scriptsize\textcircled{\tiny D}}b
=[b^k(a^{\scriptsize\textcircled{\tiny D}})^k]^*bb^{\scriptsize\textcircled{\tiny D}}b
=[b^k(a^{\scriptsize\textcircled{\tiny D}})^k]^*bb^{\scriptsize\textcircled{\tiny D}}c
=a^{\scriptsize\textcircled{\tiny D}}c$.\\
We thus have $a\stackrel{\scriptsize\textcircled{\tiny D}}\leq c.$\\
However, the pseudo core order is not anti-symmetric (see \cite[Example 4.1]{2016}).
\end{proof}

\vspace{4mm}
The following result give some characterizations of the pseudo core order,
generalizing \cite[Theorem 4.2]{2016} from matrices to an arbitrary $*$-ring.
\begin{thm}\label{23}~Let $a, b\in R^{\scriptsize\textcircled{\tiny D}}$ with $k=$ max $\{I(a), I(b)\}$ and let the pseudo core decomposition of $a,~b$ be as in Theorem~\ref{13}. Then the following are equivalent:\\
$(1)$~$a\stackrel{\scriptsize\textcircled{\tiny D}}\leq b$;\\
$(2)$~$a^{k+1}=ba^k$ and $a^*a^k=b^*a^k$;\\
$(3)$~$a_1\stackrel{\tiny\textcircled{\tiny \#}}\leq b_1$.
\end{thm}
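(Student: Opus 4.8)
The plan is to establish the cycle $(1)\Rightarrow(2)\Rightarrow(3)\Rightarrow(1)$, exploiting the pseudo core decomposition throughout. Recall from Theorem~\ref{13} and Theorem~\ref{15} that $a=a_1+a_2$ with $a_1=aa^{\scriptsize\textcircled{\tiny D}}a$, $a_1^{\tiny\textcircled{\tiny \#}}=a^{\scriptsize\textcircled{\tiny D}}$, $a_1^*a_2=a_2a_1=0$, $a_2^k=0$ (enlarging the index to $k$ is harmless since $a^{\scriptsize\textcircled{\tiny D}_m}=a^{\scriptsize\textcircled{\tiny D}_k}$ for $k\ge m$), and similarly for $b$. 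The key bridging identities, obtained by a short computation as in the proof of Theorem~\ref{15}, are $aa^{\scriptsize\textcircled{\tiny D}}=a_1a_1^{\tiny\textcircled{\tiny \#}}$ and $a^{\scriptsize\textcircled{\tiny D}}a=a_1^{\tiny\textcircled{\tiny \#}}a_1$ — indeed $a_1a_1^{\tiny\textcircled{\tiny \#}}=aa^{\scriptsize\textcircled{\tiny D}}aa^{\scriptsize\textcircled{\tiny D}}=aa^{\scriptsize\textcircled{\tiny D}}$ and $a_1^{\tiny\textcircled{\tiny \#}}a_1=a^{\scriptsize\textcircled{\tiny D}}aa^{\scriptsize\textcircled{\tiny D}}a=a^{\scriptsize\textcircled{\tiny D}}a$ using $(a^{\scriptsize\textcircled{\tiny D}})^2a=a^{\scriptsize\textcircled{\tiny D}}$. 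Also note $a^{\scriptsize\textcircled{\tiny D}}=a^{\scriptsize\textcircled{\tiny D}}(aa^{\scriptsize\textcircled{\tiny D}})^*$ and, since $a^{\scriptsize\textcircled{\tiny D}}a^{k+1}=a^k$, one has $aa^{\scriptsize\textcircled{\tiny D}}=a^k(a^{\scriptsize\textcircled{\tiny D}})^k$ and $a^{\scriptsize\textcircled{\tiny D}}=a^k(a^{\scriptsize\textcircled{\tiny D}})^{k+1}$; these power identities are what let me convert expressions in $a^{\scriptsize\textcircled{\tiny D}}$ into expressions in $a^k$, which is precisely what condition $(2)$ is phrased in.

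For $(1)\Rightarrow(2)$: from $a^{\scriptsize\textcircled{\tiny D}}a=a^{\scriptsize\textcircled{\tiny D}}b$ I would left-multiply by $a^{k+1}(a^{\scriptsize\textcircled{\tiny D}})^{k-1}$ (or simply use $a^k=a^{k+1}(a^{\scriptsize\textcircled{\tiny D}})^{k}\cdot a^{\,?}$ — more cleanly, multiply $a^{\scriptsize\textcircled{\tiny D}}a=a^{\scriptsize\textcircled{\tiny D}}b$ on the left by $a^{k+1}(a^{\scriptsize\textcircled{\tiny D}})^{k-1}$ isn't quite it); the robust route is: $a^{k+1}=a\cdot a^k=a\cdot a^{k+1}(a^{\scriptsize\textcircled{\tiny D}})^{k}a^{?}$. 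Rather than fuss, I will use the cleaner chain $a^{k+1}=a^{k+1}(a^{\scriptsize\textcircled{\tiny D}}a)$? No — the honest computation is: left-multiplying $a^{\scriptsize\textcircled{\tiny D}}a=a^{\scriptsize\textcircled{\tiny D}}b$ by $a^{m+1}(a^{\scriptsize\textcircled{\tiny D}})^{m-1}$ and using $a^{m+1}(a^{\scriptsize\textcircled{\tiny D}})^{m}=aa^{\scriptsize\textcircled{\tiny D}}a^{?}$... I will instead mimic the matrix proof: from $aa^{\scriptsize\textcircled{\tiny D}}=ba^{\scriptsize\textcircled{\tiny D}}$ deduce $a^{k+1}=a^{k+1}a^{\scriptsize\textcircled{\tiny D}}a^{?}$; concretely $ba^k=ba^{\scriptsize\textcircled{\tiny D}}a^{k+1}=aa^{\scriptsize\textcircled{\tiny D}}a^{k+1}=a^{k+1}$, using $a^{\scriptsize\textcircled{\tiny D}}a^{k+1}=a^k$ twice and $ba^{\scriptsize\textcircled{\tiny D}}=aa^{\scriptsize\textcircled{\tiny D}}$. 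For the second identity, $b^*a^k=b^*(aa^{\scriptsize\textcircled{\tiny D}})^*a^k=(aa^{\scriptsize\textcircled{\tiny D}}b)^*a^k=(ba^{\scriptsize\textcircled{\tiny D}}b)^*a^k$? — better: $a^*a^k=(aa^{\scriptsize\textcircled{\tiny D}}\cdot a\cdots)$; use $a^*a^k = a^*(a^{\scriptsize\textcircled{\tiny D}}a^{k+1})=(aa^{\scriptsize\textcircled{\tiny D}})^*a^*\cdot(\text{shift})$ together with $aa^{\scriptsize\textcircled{\tiny D}}=ba^{\scriptsize\textcircled{\tiny D}}$ to replace a leading $a$ by $b$. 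I expect the bookkeeping with the exponent $k$ versus $m$ here to be the fussiest part but entirely mechanical.

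For $(2)\Rightarrow(3)$: I would translate the two equations of $(2)$ into statements about $a_1,b_1$. Using $a_1^{\tiny\textcircled{\tiny \#}}=a^{\scriptsize\textcircled{\tiny D}}$, $a_1a_1^{\tiny\textcircled{\tiny \#}}=aa^{\scriptsize\textcircled{\tiny D}}=a^k(a^{\scriptsize\textcircled{\tiny D}})^k$, and $b_1a_1^{\tiny\textcircled{\tiny \#}}$: from $a^{k+1}=ba^k$ and $b_2b_1=0$, $b_2^k=0$ I get $b_1=b^k(b^{\scriptsize\textcircled{\tiny D}})^kb$-type identities, and combining with $a^*a^k=b^*a^k$ I can show $a_1^{\tiny\textcircled{\tiny \#}}a_1=a_1^{\tiny\textcircled{\tiny \#}}b_1$ and $a_1a_1^{\tiny\textcircled{\tiny \#}}=b_1a_1^{\tiny\textcircled{\tiny \#}}$, i.e. $a_1\stackrel{\tiny\textcircled{\tiny \#}}\leq b_1$; here the conjugate-transpose condition $a^*a^k=b^*a^k$ is exactly what feeds the $aa^{\tiny\textcircled{\tiny \#}}=ba^{\tiny\textcircled{\tiny \#}}$ half after taking adjoints and using symmetry of $a_1a_1^{\tiny\textcircled{\tiny \#}}$. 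For $(3)\Rightarrow(1)$: given $a_1\stackrel{\tiny\textcircled{\tiny \#}}\leq b_1$, i.e. $a_1^{\tiny\textcircled{\tiny \#}}a_1=a_1^{\tiny\textcircled{\tiny \#}}b_1$ and $a_1a_1^{\tiny\textcircled{\tiny \#}}=b_1a_1^{\tiny\textcircled{\tiny \#}}$, I substitute $a_1^{\tiny\textcircled{\tiny \#}}=a^{\scriptsize\textcircled{\tiny D}}$, $a_1=aa^{\scriptsize\textcircled{\tiny D}}a$, $b_1=bb^{\scriptsize\textcircled{\tiny D}}b$, and simplify $a^{\scriptsize\textcircled{\tiny D}}a_1=a^{\scriptsize\textcircled{\tiny D}}aa^{\scriptsize\textcircled{\tiny D}}a=a^{\scriptsize\textcircled{\tiny D}}a$, while $a^{\scriptsize\textcircled{\tiny D}}b_1=a^{\scriptsize\textcircled{\tiny D}}bb^{\scriptsize\textcircled{\tiny D}}b$; the remaining task is to show $a^{\scriptsize\textcircled{\tiny D}}bb^{\scriptsize\textcircled{\tiny D}}b=a^{\scriptsize\textcircled{\tiny D}}b$, for which I would use that $a_1a_1^{\tiny\textcircled{\tiny \#}}=b_1a_1^{\tiny\textcircled{\tiny \#}}$ gives $aa^{\scriptsize\textcircled{\tiny D}}=bb^{\scriptsize\textcircled{\tiny D}}bb^{\scriptsize\textcircled{\tiny D}}$-type control and then a power/annihilator argument (as in the pre-order transitivity proof above, where exactly such manipulations appear) closes it. The main obstacle I anticipate is $(3)\Rightarrow(1)$: passing from the core partial order on the "core parts" back to the pseudo core order on $a,b$ requires recovering information about $b$ from $b_1$, and $b_2$ is only nilpotent, so I must lean on the identities $b^k=b_1^k$ (since $b_2b_1=0$ and $b_2^k=0$ force $b^k=(b_1+b_2)^k=b_1^k$) and $(b^m)^*b_2=0$ to bridge the gap, mirroring the matrix argument of \cite[Theorem 4.2]{2016}.
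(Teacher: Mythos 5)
Your overall strategy is the paper's: everything is routed through the bridging identities $a_1^{\tiny\textcircled{\tiny \#}}=a^{\scriptsize\textcircled{\tiny D}}$, $a_1a_1^{\tiny\textcircled{\tiny \#}}=aa^{\scriptsize\textcircled{\tiny D}}$, $a_1^{\tiny\textcircled{\tiny \#}}a_1=a^{\scriptsize\textcircled{\tiny D}}a$ together with the power identities $aa^{\scriptsize\textcircled{\tiny D}}=a^k(a^{\scriptsize\textcircled{\tiny D}})^k$ and $a^{\scriptsize\textcircled{\tiny D}}a^{k+1}=a^k$, and the one computation you actually carry out ($ba^k=ba^{\scriptsize\textcircled{\tiny D}}a^{k+1}=aa^{\scriptsize\textcircled{\tiny D}}a^{k+1}=a^{k+1}$) is correct. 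But as written this is a plan rather than a proof: two of the three implications in your cycle are asserted, not derived. In $(2)\Rightarrow(3)$ you say only that you ``can show'' $a_1^{\tiny\textcircled{\tiny \#}}a_1=a_1^{\tiny\textcircled{\tiny \#}}b_1$ and $a_1a_1^{\tiny\textcircled{\tiny \#}}=b_1a_1^{\tiny\textcircled{\tiny \#}}$, with no argument; the paper sidesteps this by proving $(2)\Rightarrow(1)$ directly (take the involution of $a^*a^k=b^*a^k$ to get $(a^k)^*a=(a^k)^*b$, pre-multiply by $a^{\scriptsize\textcircled{\tiny D}}((a^{\scriptsize\textcircled{\tiny D}})^k)^*$ and use $(a^k(a^{\scriptsize\textcircled{\tiny D}})^k)^*=aa^{\scriptsize\textcircled{\tiny D}}$; post-multiply $a^{k+1}=ba^k$ by $(a^{\scriptsize\textcircled{\tiny D}})^{k+1}$ for the other half), and you would do better to organize the proof as $(1)\Leftrightarrow(2)$ and $(1)\Leftrightarrow(3)$. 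Similarly, the second half of $(1)\Rightarrow(2)$ dissolves into abandoned attempts; the clean route is to take the involution of $a^{\scriptsize\textcircled{\tiny D}}a=a^{\scriptsize\textcircled{\tiny D}}b$ and post-multiply by $a^*a^k$, using $(a^{\scriptsize\textcircled{\tiny D}})^*a^*=aa^{\scriptsize\textcircled{\tiny D}}$ and $aa^{\scriptsize\textcircled{\tiny D}}a^k=a^k$.

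The most serious problems are in $(3)\Rightarrow(1)$. First, the hypothesis $a_1a_1^{\tiny\textcircled{\tiny \#}}=b_1a_1^{\tiny\textcircled{\tiny \#}}$ only yields $aa^{\scriptsize\textcircled{\tiny D}}=bb^{\scriptsize\textcircled{\tiny D}}ba^{\scriptsize\textcircled{\tiny D}}$, and you never address how to strip the leading $bb^{\scriptsize\textcircled{\tiny D}}$ to reach $aa^{\scriptsize\textcircled{\tiny D}}=ba^{\scriptsize\textcircled{\tiny D}}$; that is the real content of the implication, and it needs the iterated substitution $aa^{\scriptsize\textcircled{\tiny D}}=(bb^{\scriptsize\textcircled{\tiny D}}b)^2(a^{\scriptsize\textcircled{\tiny D}})^2=b(bb^{\scriptsize\textcircled{\tiny D}}ba^{\scriptsize\textcircled{\tiny D}})a^{\scriptsize\textcircled{\tiny D}}=ba(a^{\scriptsize\textcircled{\tiny D}})^2=ba^{\scriptsize\textcircled{\tiny D}}$, based on $(bb^{\scriptsize\textcircled{\tiny D}}b)^2=b(bb^{\scriptsize\textcircled{\tiny D}}b)$. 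Second, the auxiliary identity you propose to lean on, $b^k=b_1^k$, is false: $b_2b_1=0$ gives $b^k=\sum_{i=0}^{k}b_1^ib_2^{k-i}$, and $b_2^k=0$ kills only the $i=0$ term, leaving the cross terms $b_1^ib_2^{k-i}$; in the matrix picture $b=\bigl(\begin{smallmatrix}T&S\\0&N\end{smallmatrix}\bigr)$, $b_1=\bigl(\begin{smallmatrix}T&S\\0&0\end{smallmatrix}\bigr)$ one has $b^k\neq b_1^k$ whenever $S\neq0$. The identities that are actually available are $bb^{\scriptsize\textcircled{\tiny D}}=b^k(b^{\scriptsize\textcircled{\tiny D}})^k=b_1b_1^{\tiny\textcircled{\tiny \#}}$ and $b^{\scriptsize\textcircled{\tiny D}}b^{k+1}=b^k$. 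So the skeleton is right, but the proof is not yet there.
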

\begin{proof}~$(1)\Rightarrow(2)$.~Post-multiply $aa^{\scriptsize\textcircled{\tiny D}}=ba^{\scriptsize\textcircled{\tiny D}}$ by $a^{k+1}$, then we derive $a^{k+1}=ba^k$. From $a^{\scriptsize\textcircled{\tiny D}}a=a^{\scriptsize\textcircled{\tiny D}}b$, it follows that $a^*(a^{\scriptsize\textcircled{\tiny D}})^*=b^*(a^{\scriptsize\textcircled{\tiny D}})^*$. Post-multiply this equality by $a^*a^k$, then $a^*a^k=b^*a^k$.
\\
$(2)\Rightarrow(1)$. Equality $a^*a^k=b^*a^k$ yields that $(a^k)^*a=(a^k)^*b$. Pre-multiply this equality by $a^{\scriptsize\textcircled{\tiny D}}((a^{\scriptsize\textcircled{\tiny D}})^k)^*$, then $a^{\scriptsize\textcircled{\tiny D}}a=a^{\scriptsize\textcircled{\tiny D}}b$. Post-multiply $a^{k+1}=ba^k$ by $(a^{\scriptsize\textcircled{\tiny D}})^{k+1}$,
then $aa^{\scriptsize\textcircled{\tiny D}}=ba^{\scriptsize\textcircled{\tiny D}}$.\\
$(1)\Rightarrow(3)$.~From Theorem \ref{15} and $aa^{\scriptsize\textcircled{\tiny D}}=ba^{\scriptsize\textcircled{\tiny D}}$, it follows that
\begin{equation*}
\begin{aligned}
a_1a_1^{\tiny\textcircled{\tiny \#}}&=aa_1^{\tiny\textcircled{\tiny \#}}
=aa^{\scriptsize\textcircled{\tiny D}}
=ba^{\scriptsize\textcircled{\tiny D}}
=ba(a^{\scriptsize\textcircled{\tiny D}})^2=b^2(a^{\scriptsize\textcircled{\tiny D}})^2
=\cdots=b^k(a^{\scriptsize\textcircled{\tiny D}})^k
=bb^{\scriptsize\textcircled{\tiny D}}b^k(a^{\scriptsize\textcircled{\tiny D}})^k\\
&=bb^{\scriptsize\textcircled{\tiny D}}ba^{\scriptsize\textcircled{\tiny D}}
=b_1a_1^{\tiny\textcircled{\tiny \#}}.
\end{aligned}
\end{equation*}
Meanwhile, we have $aa^{\scriptsize\textcircled{\tiny D}}=aa^{\scriptsize\textcircled{\tiny D}}bb^{\scriptsize\textcircled{\tiny D}}$ by taking an involution on $aa^{\scriptsize\textcircled{\tiny D}}=bb^{\scriptsize\textcircled{\tiny D}}ba^{\scriptsize\textcircled{\tiny D}}=bb^{\scriptsize\textcircled{\tiny D}}aa^{\scriptsize\textcircled{\tiny D}}$.
So $a^{\scriptsize\textcircled{\tiny D}}=a^{\scriptsize\textcircled{\tiny D}}bb^{\scriptsize\textcircled{\tiny D}}$.
Therefore $a_1^{\tiny\textcircled{\tiny \#}}a_1
=a_1^{\tiny\textcircled{\tiny \#}}a=a^{\scriptsize\textcircled{\tiny D}}a
=a^{\scriptsize\textcircled{\tiny D}}b
=a^{\scriptsize\textcircled{\tiny D}}bb^{\scriptsize\textcircled{\tiny D}}b
=a_1^{\tiny\textcircled{\tiny \#}}b_1$.\\
$(3)\Rightarrow(1)$.~Since $aa^{\scriptsize\textcircled{\tiny D}}
=a_1a_1^{\tiny\textcircled{\tiny \#}}=b_1a_1^{\tiny\textcircled{\tiny \#}}
=bb^{\scriptsize\textcircled{\tiny D}}ba^{\scriptsize\textcircled{\tiny D}}$, then
\begin{equation*}
\begin{aligned}
aa^{\scriptsize\textcircled{\tiny D}}&=bb^{\scriptsize\textcircled{\tiny D}}baa^{\scriptsize\textcircled{\tiny D}}a^{\scriptsize\textcircled{\tiny D}}
=(bb^{\scriptsize\textcircled{\tiny D}}b)^2(a^{\scriptsize\textcircled{\tiny D}})^2
=bb^{\scriptsize\textcircled{\tiny D}}bb^k(b^{\scriptsize\textcircled{\tiny D}})^kb(a^{\scriptsize\textcircled{\tiny D}})^2
=b(bb^{\scriptsize\textcircled{\tiny D}}ba^{\scriptsize\textcircled{\tiny D}})a^{\scriptsize\textcircled{\tiny D}}=ba(a^{\scriptsize\textcircled{\tiny D}})^2\\
&=ba^{\scriptsize\textcircled{\tiny D}}.
\end{aligned}
\end{equation*}
Equalities $aa^{\scriptsize\textcircled{\tiny D}}=bb^{\scriptsize\textcircled{\tiny D}}ba^{\scriptsize\textcircled{\tiny D}}$ and $aa^{\scriptsize\textcircled{\tiny D}}=ba^{\scriptsize\textcircled{\tiny D}}$ yield that $aa^{\scriptsize\textcircled{\tiny D}}=aa^{\scriptsize\textcircled{\tiny D}}bb^{\scriptsize\textcircled{\tiny D}}$. Therefore $a^{\scriptsize\textcircled{\tiny D}}=a^{\scriptsize\textcircled{\tiny D}}bb^{\scriptsize\textcircled{\tiny D}}$. Hence $a^{\scriptsize\textcircled{\tiny D}}b=a^{\scriptsize\textcircled{\tiny D}}bb^{\scriptsize\textcircled{\tiny D}}b=a_1^{\tiny\textcircled{\tiny \#}}b_1=a_1^{\tiny\textcircled{\tiny \#}}a_1=a^{\scriptsize\textcircled{\tiny D}}a$.
\end{proof}

Wang and Chen \cite{W2015} gave some equivalences to $a \stackrel{\tiny\textcircled{\tiny \#}}\leq b$ under the assumption that $a$ is EP. Similarly, we give a characterization of $a \stackrel{\scriptsize\textcircled{\tiny D}}\leq b$ whenever $a$ is *-DMP. In the following result, $c_a$ and $c_b$  are the core part of the core-nilpotent decomposition of $a,~b$ respectively.

\begin{thm}~Let $a,~b\in R^{\scriptsize\textcircled{\tiny D}}$. If $a$ is *-DMP, then the following are equivalent:\\
$(1)$~$a \stackrel{\scriptsize\textcircled{\tiny D}}\leq b$;\\
$(2)$~$c_a\stackrel{\tiny\textcircled{\tiny \#}}\leq c_b$;\\
$(3)$~$a^{\scriptsize\textcircled{\tiny D}}b^{\scriptsize\textcircled{\tiny D}}=b^{\scriptsize\textcircled{\tiny D}}a^{\scriptsize\textcircled{\tiny D}}$ and $a^{\scriptsize\textcircled{\tiny D}}b=a^{\scriptsize\textcircled{\tiny D}}a$;\\
$(4)$~$a^{\scriptsize\textcircled{\tiny D}} \stackrel{\scriptsize\textcircled{\tiny D}}\leq b^{\scriptsize\textcircled{\tiny D}}$ and $a^{\scriptsize\textcircled{\tiny D}}b=a^{\scriptsize\textcircled{\tiny D}}a$.
\end{thm}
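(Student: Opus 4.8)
The plan is to prove the four-way equivalence by establishing a cycle, using the characterization of the pseudo core order from Theorem~\ref{23} together with the fact that, since $a$ is *-DMP, we have $a^{\scriptsize\textcircled{\tiny D}}=a^{D}$ and $aa^{\scriptsize\textcircled{\tiny D}}=a^{\scriptsize\textcircled{\tiny D}}a$ (by Lemma~\ref{1} and Theorem~\ref{5}(1)), so the pseudo core inverse of $a$ behaves like a group inverse that commutes with $a$. In particular the core part $c_a=aa^Da$ coincides with the first summand $a_1$ of the pseudo core decomposition of $a$, and $c_a^{\tiny\textcircled{\tiny \#}}=a_1^{\tiny\textcircled{\tiny \#}}=a^{\scriptsize\textcircled{\tiny D}}$ by Theorem~\ref{15}; moreover $c_a$ is EP because $a$ is *-DMP, so $c_a^{\tiny\textcircled{\tiny \#}}=c_a^{\#}=a^D$. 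These identifications are what let us translate between the statements about $a,b$ and the statements about $c_a,c_b$ (and about $a_1,b_1$).

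First I would show $(1)\Leftrightarrow(2)$. By Theorem~\ref{23}, $a\stackrel{\scriptsize\textcircled{\tiny D}}\leq b$ is equivalent to $a_1\stackrel{\tiny\textcircled{\tiny \#}}\leq b_1$, where $a_1=c_a$; and since $a$ is *-DMP, $c_a$ is EP, so the core partial order $c_a\stackrel{\tiny\textcircled{\tiny \#}}\leq c_b$ can be compared with $a_1\stackrel{\tiny\textcircled{\tiny \#}}\leq b_1$ using the results of Wang and Chen \cite{W2015} on the core order when the smaller element is EP. The point is that $c_b$ need not equal $b_1$, but since $a_1^{\tiny\textcircled{\tiny \#}}=a^{\scriptsize\textcircled{\tiny D}}$ depends only on $a$, the defining equations $a_1^{\tiny\textcircled{\tiny \#}}a_1=a_1^{\tiny\textcircled{\tiny \#}}b_1$, $a_1a_1^{\tiny\textcircled{\tiny \#}}=b_1a_1^{\tiny\textcircled{\tiny \#}}$ can be rewritten via $b_1=bb^{\scriptsize\textcircled{\tiny D}}b$ (and $c_b=bb^Db$) and the relation $b^{\scriptsize\textcircled{\tiny D}}b^{k}=b^{D}b^{k}$ for $k\geq I(b)$, so that $b_1$ and $c_b$ act the same way against $a^{\scriptsize\textcircled{\tiny D}}$ on the relevant sides. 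Next, for $(1)\Rightarrow(3)$: from $a\stackrel{\scriptsize\textcircled{\tiny D}}\leq b$ we already get $a^{\scriptsize\textcircled{\tiny D}}b=a^{\scriptsize\textcircled{\tiny D}}a$; and the computation in the proof of Theorem~\ref{23}$(3)\Rightarrow(1)$ gives $a^{\scriptsize\textcircled{\tiny D}}=a^{\scriptsize\textcircled{\tiny D}}bb^{\scriptsize\textcircled{\tiny D}}$ and $aa^{\scriptsize\textcircled{\tiny D}}=bb^{\scriptsize\textcircled{\tiny D}}aa^{\scriptsize\textcircled{\tiny D}}$, so $aa^{\scriptsize\textcircled{\tiny D}}$ and $bb^{\scriptsize\textcircled{\tiny D}}$ commute; combining this with $aa^{\scriptsize\textcircled{\tiny D}}=a^{\scriptsize\textcircled{\tiny D}}a$ (which is where *-DMP is used) and the identities $a^{\scriptsize\textcircled{\tiny D}}=a^{\scriptsize\textcircled{\tiny D}}aa^{\scriptsize\textcircled{\tiny D}}$, $b^{\scriptsize\textcircled{\tiny D}}=b^{\scriptsize\textcircled{\tiny D}}bb^{\scriptsize\textcircled{\tiny D}}$ one checks $a^{\scriptsize\textcircled{\tiny D}}b^{\scriptsize\textcircled{\tiny D}}=b^{\scriptsize\textcircled{\tiny D}}a^{\scriptsize\textcircled{\tiny D}}$ by a direct manipulation.

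Then I would close the cycle with $(3)\Rightarrow(4)\Rightarrow(1)$. For $(3)\Rightarrow(4)$: since $a$ is *-DMP, $a^{\scriptsize\textcircled{\tiny D}}=a^{D}$ is itself *-DMP (its group inverse is $a$, and $a^{D}(a^{D})^{\#}=a^{D}a$ is symmetric), so $a^{\scriptsize\textcircled{\tiny D}}$ has its own pseudo core inverse and we may apply the equivalence $(1)\Leftrightarrow(3)$ already proved, with $a$ replaced by $a^{\scriptsize\textcircled{\tiny D}}$ and $b$ replaced by $b^{\scriptsize\textcircled{\tiny D}}$: the hypotheses $a^{\scriptsize\textcircled{\tiny D}}b^{\scriptsize\textcircled{\tiny D}}=b^{\scriptsize\textcircled{\tiny D}}a^{\scriptsize\textcircled{\tiny D}}$ and the EP-type commutation give $(a^{\scriptsize\textcircled{\tiny D}})^{\scriptsize\textcircled{\tiny D}}a^{\scriptsize\textcircled{\tiny D}}=(a^{\scriptsize\textcircled{\tiny D}})^{\scriptsize\textcircled{\tiny D}}b^{\scriptsize\textcircled{\tiny D}}$ etc., i.e. $a^{\scriptsize\textcircled{\tiny D}}\stackrel{\scriptsize\textcircled{\tiny D}}\leq b^{\scriptsize\textcircled{\tiny D}}$, while $a^{\scriptsize\textcircled{\tiny D}}b=a^{\scriptsize\textcircled{\tiny D}}a$ is carried along. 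Here one uses $(a^{\scriptsize\textcircled{\tiny D}})^{\scriptsize\textcircled{\tiny D}}=a_1=c_a$ and its EP-ness. For $(4)\Rightarrow(1)$: unpack $a^{\scriptsize\textcircled{\tiny D}}\stackrel{\scriptsize\textcircled{\tiny D}}\leq b^{\scriptsize\textcircled{\tiny D}}$ via Theorem~\ref{23}(2) applied to $a^{\scriptsize\textcircled{\tiny D}},b^{\scriptsize\textcircled{\tiny D}}$, obtaining power relations that, together with $a^{\scriptsize\textcircled{\tiny D}}b=a^{\scriptsize\textcircled{\tiny D}}a$ and the *-DMP identity $aa^{\scriptsize\textcircled{\tiny D}}=a^{\scriptsize\textcircled{\tiny D}}a$, recover $aa^{\scriptsize\textcircled{\tiny D}}=ba^{\scriptsize\textcircled{\tiny D}}$; the second defining equation of $a\stackrel{\scriptsize\textcircled{\tiny D}}\leq b$ is $a^{\scriptsize\textcircled{\tiny D}}a=a^{\scriptsize\textcircled{\tiny D}}b$, which is hypothesis. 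I expect the main obstacle to be the bookkeeping in $(1)\Leftrightarrow(2)$: reconciling $b_1$ (from the pseudo core decomposition, which is what Theorem~\ref{23} naturally produces) with $c_b$ (from the core-nilpotent decomposition, which is what the statement uses) requires care, since $b$ itself is not assumed *-DMP, so $b_1\neq c_b$ in general — one must verify that the two agree on precisely the products that appear in the order relations, using that $a^{\scriptsize\textcircled{\tiny D}}$ annihilates the difference $b_1-c_b$ appropriately and that $c_a=a_1$ is EP.
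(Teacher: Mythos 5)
Your overall framework (using $a^{\scriptsize\textcircled{\tiny D}}=a^{D}$, $aa^{\scriptsize\textcircled{\tiny D}}=a^{\scriptsize\textcircled{\tiny D}}a$, $c_a=a_1$ and $c_a^{\tiny\textcircled{\tiny \#}}=a^{\scriptsize\textcircled{\tiny D}}$) matches the paper's, but the cycle you propose does not close. The critical break is in $(3)\Rightarrow(4)$: you "apply the equivalence $(1)\Leftrightarrow(3)$ already proved" to the pair $(a^{\scriptsize\textcircled{\tiny D}},b^{\scriptsize\textcircled{\tiny D}})$, but at that stage you have only proved $(1)\Rightarrow(3)$; the direction you actually need, from a commutation condition to the order relation, is exactly the implication $(3)\Rightarrow(1)$ that in your scheme is supposed to follow \emph{from} $(3)\Rightarrow(4)\Rightarrow(1)$. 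Moreover, even granting the full equivalence for the new pair, condition $(3)$ for $(a^{\scriptsize\textcircled{\tiny D}},b^{\scriptsize\textcircled{\tiny D}})$ reads $(a^{\scriptsize\textcircled{\tiny D}})^{\scriptsize\textcircled{\tiny D}}(b^{\scriptsize\textcircled{\tiny D}})^{\scriptsize\textcircled{\tiny D}}=(b^{\scriptsize\textcircled{\tiny D}})^{\scriptsize\textcircled{\tiny D}}(a^{\scriptsize\textcircled{\tiny D}})^{\scriptsize\textcircled{\tiny D}}$ and $(a^{\scriptsize\textcircled{\tiny D}})^{\scriptsize\textcircled{\tiny D}}b^{\scriptsize\textcircled{\tiny D}}=(a^{\scriptsize\textcircled{\tiny D}})^{\scriptsize\textcircled{\tiny D}}a^{\scriptsize\textcircled{\tiny D}}$, which is not condition $(3)$ for $(a,b)$ and whose derivation you do not supply. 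The paper avoids all of this by computing $(a^{\scriptsize\textcircled{\tiny D}})^{\scriptsize\textcircled{\tiny D}}a^{\scriptsize\textcircled{\tiny D}}=a^{2}(a^{\scriptsize\textcircled{\tiny D}})^{2}$ and verifying the two defining equations of $a^{\scriptsize\textcircled{\tiny D}}\stackrel{\scriptsize\textcircled{\tiny D}}\leq b^{\scriptsize\textcircled{\tiny D}}$ directly from $(3)$, and it proves $(3)\Rightarrow(1)$ separately by a direct computation ($ba^{\scriptsize\textcircled{\tiny D}}=b(a^{\scriptsize\textcircled{\tiny D}})^{2}b=\cdots=bb^{\scriptsize\textcircled{\tiny D}}aa^{\scriptsize\textcircled{\tiny D}}=aa^{\scriptsize\textcircled{\tiny D}}$). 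You need one of these two implications proved from scratch; neither is in your sketch.

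A second, smaller problem is $(2)\Rightarrow(1)$. Your mechanism for reconciling $b_1=bb^{\scriptsize\textcircled{\tiny D}}b$ with $c_b=bb^{D}b$ (``they act the same against $a^{\scriptsize\textcircled{\tiny D}}$'') is only verifiable \emph{assuming} $(1)$ (one uses $a^{\scriptsize\textcircled{\tiny D}}b=ba^{\scriptsize\textcircled{\tiny D}}$ to commute $a^{\scriptsize\textcircled{\tiny D}}$ past $b^{D}$), so it cannot be the bridge from $(2)$ back to $(1)$. The paper sidesteps $b_1$ entirely and works with $c_b$ directly: from $c_a c_a^{\tiny\textcircled{\tiny \#}}=c_b c_a^{\tiny\textcircled{\tiny \#}}$ it computes $aa^{\scriptsize\textcircled{\tiny D}}=bb^{D}ba^{\scriptsize\textcircled{\tiny D}}=(bb^{D}b)^{2}(a^{\scriptsize\textcircled{\tiny D}})^{2}=\cdots=ba^{\scriptsize\textcircled{\tiny D}}$, and similarly for the other equation. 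Finally, in $(1)\Rightarrow(3)$ the claim $a^{\scriptsize\textcircled{\tiny D}}b^{\scriptsize\textcircled{\tiny D}}=b^{\scriptsize\textcircled{\tiny D}}a^{\scriptsize\textcircled{\tiny D}}$ is the whole content of that implication and you defer it to ``a direct manipulation''; it does go through (both sides equal $(a^{\scriptsize\textcircled{\tiny D}})^{2}$, via $b^{\scriptsize\textcircled{\tiny D}}aa^{\scriptsize\textcircled{\tiny D}}=b^{\scriptsize\textcircled{\tiny D}}b^{k+1}(a^{\scriptsize\textcircled{\tiny D}})^{k+1}=a^{\scriptsize\textcircled{\tiny D}}$ and $aa^{\scriptsize\textcircled{\tiny D}}b^{\scriptsize\textcircled{\tiny D}}=a^{\scriptsize\textcircled{\tiny D}}$), but note that commutation of $a^{\scriptsize\textcircled{\tiny D}}$ with $b$ gives commutation with $b^{D}$ by Drazin's theorem, \emph{not} automatically with $b^{\scriptsize\textcircled{\tiny D}}$, so this step must be written out rather than asserted.
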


\begin{proof}~Let $k=$max\{$I(a),~I(b)$\}. If $a$ is *-DMP, then $a^{\scriptsize\textcircled{\tiny D}}=a^D$ by Lemma~\ref{1}~and $aa^{\scriptsize\textcircled{\tiny D}}=a^{\scriptsize\textcircled{\tiny D}}a$ by Theorem~\ref{5}.\\
$(1)\Rightarrow(2)$.~$a^{\scriptsize\textcircled{\tiny D}}=c_a^{\tiny\textcircled{\tiny \#}}$ (see \cite[Theorem 2.9]{A2016}) and $a^{\scriptsize\textcircled{\tiny D}}a=a^{\scriptsize\textcircled{\tiny D}}b$ imply $c_a^{\tiny\textcircled{\tiny \#}}a=c_a^{\tiny\textcircled{\tiny \#}}b$. From $a^{\scriptsize\textcircled{\tiny D}}b=a^{\scriptsize\textcircled{\tiny D}}a=aa^{\scriptsize\textcircled{\tiny D}}=ba^{\scriptsize\textcircled{\tiny D}}$, we have $a^{\scriptsize\textcircled{\tiny D}}b^D=b^Da^{\scriptsize\textcircled{\tiny D}}$. So, $a^{\scriptsize\textcircled{\tiny D}}bb^Db=bb^Dba^{\scriptsize\textcircled{\tiny D}}=bb^Db^k(a^{\scriptsize\textcircled{\tiny D}})^k=b^k(a^{\scriptsize\textcircled{\tiny D}})^k=aa^{\scriptsize\textcircled{\tiny D}}$. Therefore $c_a^{\tiny\textcircled{\tiny \#}}c_b=c_bc_a^{\tiny\textcircled{\tiny \#}}=c_ac_a^{\tiny\textcircled{\tiny \#}}=c_a^{\tiny\textcircled{\tiny \#}}c_a$.\\
$(2)\Rightarrow(1)$. $aa^{\scriptsize\textcircled{\tiny D}}=c_ac_a^{\tiny\textcircled{\tiny \#}}=c_bc_a^{\tiny\textcircled{\tiny \#}}=bb^Dba^{\scriptsize\textcircled{\tiny D}}=(bb^Db)^2(a^{\scriptsize\textcircled{\tiny D}})^2=b^2b^Db(a^{\scriptsize\textcircled{\tiny D}})^2=b(bb^Dba^{\scriptsize\textcircled{\tiny D}})a^{\scriptsize\textcircled{\tiny D}}=baa^{\scriptsize\textcircled{\tiny D}}a^{\scriptsize\textcircled{\tiny D}}=ba^{\scriptsize\textcircled{\tiny D}}$, and $a^{\scriptsize\textcircled{\tiny D}}a=c_a^{\tiny\textcircled{\tiny \#}}c_a=c_a^{\tiny\textcircled{\tiny \#}}c_b=a^{\scriptsize\textcircled{\tiny D}}bb^Db=a^{\scriptsize\textcircled{\tiny D}}a^{\scriptsize\textcircled{\tiny D}}(bb^Db)^2=a^{\scriptsize\textcircled{\tiny D}}a^{\scriptsize\textcircled{\tiny D}}ab=a^{\scriptsize\textcircled{\tiny D}}b$.\\
$(1)\Rightarrow(3)$. From $a^{\scriptsize\textcircled{\tiny D}}a=a^{\scriptsize\textcircled{\tiny D}}b$ and $aa^{\scriptsize\textcircled{\tiny D}}=ba^{\scriptsize\textcircled{\tiny D}}$, it follows that $$aa^{\scriptsize\textcircled{\tiny D}}b=aa^{\scriptsize\textcircled{\tiny D}}a=ba^{\scriptsize\textcircled{\tiny D}}a=baa^{\scriptsize\textcircled{\tiny D}},$$
which forces $aa^{\scriptsize\textcircled{\tiny D}}b^{\scriptsize\textcircled{\tiny D}}=b^{\scriptsize\textcircled{\tiny D}}aa^{\scriptsize\textcircled{\tiny D}}=b^{\scriptsize\textcircled{\tiny D}}b^{k+1}(a^{\scriptsize\textcircled{\tiny D}})^{k+1}=b^{k}(a^{\scriptsize\textcircled{\tiny D}})^{k+1}=a^{\scriptsize\textcircled{\tiny D}}$.
So $a^{\scriptsize\textcircled{\tiny D}}b^{\scriptsize\textcircled{\tiny D}}=(a^{\scriptsize\textcircled{\tiny D}})^2=b^{\scriptsize\textcircled{\tiny D}}a^{\scriptsize\textcircled{\tiny D}}.$\\
$(3)\Rightarrow(1)$. $ba^{\scriptsize\textcircled{\tiny D}}=b(a^{\scriptsize\textcircled{\tiny D}})^2a=b(a^{\scriptsize\textcircled{\tiny D}})^2b=b(a^{\scriptsize\textcircled{\tiny D}})^{k+1}b^k=b(a^{\scriptsize\textcircled{\tiny D}})^{k+1}b^{\scriptsize\textcircled{\tiny D}}b^{k+1}=bb^{\scriptsize\textcircled{\tiny D}}(a^{\scriptsize\textcircled{\tiny D}})^{k+1}b^{k+1}=bb^{\scriptsize\textcircled{\tiny D}}aa^{\scriptsize\textcircled{\tiny D}}$, together with $aa^{\scriptsize\textcircled{\tiny D}}=a^{\scriptsize\textcircled{\tiny D}}a=a^{\scriptsize\textcircled{\tiny D}}b=(a^{\scriptsize\textcircled{\tiny D}})^kb^k=(a^{\scriptsize\textcircled{\tiny D}})^kb^{\scriptsize\textcircled{\tiny D}}b^{k+1}=b^{\scriptsize\textcircled{\tiny D}}(a^{\scriptsize\textcircled{\tiny D}})^kb^{k+1}=bb^{\scriptsize\textcircled{\tiny D}}aa^{\scriptsize\textcircled{\tiny D}}$, implies $aa^{\scriptsize\textcircled{\tiny D}}=ba^{\scriptsize\textcircled{\tiny D}}$.\\
$(3)\Rightarrow(4)$. From $a^{\scriptsize\textcircled{\tiny D}}b^{\scriptsize\textcircled{\tiny D}}=b^{\scriptsize\textcircled{\tiny D}}a^{\scriptsize\textcircled{\tiny D}}$, it follows that (1) holds and
\begin{equation*}
\begin{aligned}
(a^{\scriptsize\textcircled{\tiny D}})^{\scriptsize\textcircled{\tiny D}}a^{\scriptsize\textcircled{\tiny D}}&=a^2(a^{\scriptsize\textcircled{\tiny D}})^{2}=a^2b^{k}(a^{\scriptsize\textcircled{\tiny D}})^{k+2}=a^2b^{\scriptsize\textcircled{\tiny D}}b^{k+1}(a^{\scriptsize\textcircled{\tiny D}})^{k+2}
=a^2b^{\scriptsize\textcircled{\tiny D}}a(a^{\scriptsize\textcircled{\tiny D}})^2\\
&=a^2b^{\scriptsize\textcircled{\tiny D}}a^{\scriptsize\textcircled{\tiny D}}=a^2a^{\scriptsize\textcircled{\tiny D}}b^{\scriptsize\textcircled{\tiny D}}=(a^{\scriptsize\textcircled{\tiny D}})^{\scriptsize\textcircled{\tiny D}}b^{\scriptsize\textcircled{\tiny D}},\\
a^{\scriptsize\textcircled{\tiny D}}(a^{\scriptsize\textcircled{\tiny D}})^{\scriptsize\textcircled{\tiny D}}&=a^{\scriptsize\textcircled{\tiny D}}a^2a^{\scriptsize\textcircled{\tiny D}}=aa^{\scriptsize\textcircled{\tiny D}}=b^{\scriptsize\textcircled{\tiny D}}a^2a^{\scriptsize\textcircled{\tiny D}}=b^{\scriptsize\textcircled{\tiny D}}(a^{\scriptsize\textcircled{\tiny D}})^{\scriptsize\textcircled{\tiny D}}.
\end{aligned}
\end{equation*}
$(4)\Rightarrow(3)$. Since $(a^{\scriptsize\textcircled{\tiny D}})^{\scriptsize\textcircled{\tiny D}}a^{\scriptsize\textcircled{\tiny D}}
=(a^{\scriptsize\textcircled{\tiny D}})^{\scriptsize\textcircled{\tiny D}}b^{\scriptsize\textcircled{\tiny D}}$ and
$a^{\scriptsize\textcircled{\tiny D}}(a^{\scriptsize\textcircled{\tiny D}})^{\scriptsize\textcircled{\tiny D}}
=b^{\scriptsize\textcircled{\tiny D}}(a^{\scriptsize\textcircled{\tiny D}})^{\scriptsize\textcircled{\tiny D}}$, then we obtain
$aa^{\scriptsize\textcircled{\tiny D}}
=a^2a^{\scriptsize\textcircled{\tiny D}}b^{\scriptsize\textcircled{\tiny D}}$ and
$aa^{\scriptsize\textcircled{\tiny D}}=b^{\scriptsize\textcircled{\tiny D}}a^2a^{\scriptsize\textcircled{\tiny D}}$.
So $b^{\scriptsize\textcircled{\tiny D}}a^{\scriptsize\textcircled{\tiny D}}
=(a^{\scriptsize\textcircled{\tiny D}})^2
=a^{\scriptsize\textcircled{\tiny D}}b^{\scriptsize\textcircled{\tiny D}}$.
\end{proof}
\vspace{4mm}

Wang and Chen \cite{W2015} proved that if $a \stackrel{*}\leq b$, $a^{\dag}$ exists, then $b^{\dag}$ exists if and only if $[b(1-aa^{\dag})]^{\dag}$ exists.
Similarly, we have the following result.
\begin{thm}~Let $a,~b\in R^{\scriptsize\textcircled{\tiny D}}$ with $a\stackrel{\scriptsize\textcircled{\tiny D}}\leq b$. Suppose $a$ is *-DMP. Then $b$ is *-DMP if and only if $b(1-aa^{\scriptsize\textcircled{\tiny D}})$ is *-DMP.
\end{thm}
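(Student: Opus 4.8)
The plan is to reduce the statement, via Lemma~\ref{2} (an element $z$ is *-DMP iff $z^{D}$ exists and $zz^{D}$ is symmetric), to a corner (Peirce) decomposition of $b$ induced by the projection $p:=aa^{\scriptsize\textcircled{\tiny D}}$, and then to transfer both conditions of Lemma~\ref{2} between $b$ and $b(1-p)=b(1-aa^{\scriptsize\textcircled{\tiny D}})$ one corner at a time.

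First I would set up $p$ and the decomposition. Since $a$ is *-DMP, Lemma~\ref{1} gives $a^{\scriptsize\textcircled{\tiny D}}=a^{D}$ and Theorem~\ref{5} gives $aa^{\scriptsize\textcircled{\tiny D}}=a^{\scriptsize\textcircled{\tiny D}}a$, so $p=aa^{\scriptsize\textcircled{\tiny D}}=a^{\scriptsize\textcircled{\tiny D}}a$ is a Hermitian idempotent. Writing $a\stackrel{\scriptsize\textcircled{\tiny D}}\leq b$ out as $a^{\scriptsize\textcircled{\tiny D}}a=a^{\scriptsize\textcircled{\tiny D}}b$ and $aa^{\scriptsize\textcircled{\tiny D}}=ba^{\scriptsize\textcircled{\tiny D}}$, the chain $pb=aa^{\scriptsize\textcircled{\tiny D}}b=aa^{\scriptsize\textcircled{\tiny D}}a=ba^{\scriptsize\textcircled{\tiny D}}a=baa^{\scriptsize\textcircled{\tiny D}}=bp$ (already used in the preceding theorem) shows $p$ commutes with $b$; applying $*$ and $p^{*}=p$ gives $pb^{*}=b^{*}p$, and since $b^{D}$ exists ($b\in R^{\scriptsize\textcircled{\tiny D}}$) also $pb^{D}=b^{D}p$ by \cite[Theorem~1]{D1958}. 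Set $e:=bp=pb$ and $f:=b(1-p)=(1-p)b$; then $b=e+f$, $ef=fe=0$, $e\in pRp$, $f\in(1-p)R(1-p)$, and $b^{D}$ is block diagonal, $b^{D}=pb^{D}p+(1-p)b^{D}(1-p)$. Moreover $e=bp=b(a^{\scriptsize\textcircled{\tiny D}}a)=(ba^{\scriptsize\textcircled{\tiny D}})a=aa^{\scriptsize\textcircled{\tiny D}}a=a_{1}$, the core part of $a$, which is group invertible with $e^{\#}=(a^{\scriptsize\textcircled{\tiny D}})^{2}a=a^{D}$ by Theorem~\ref{13} (using $a^{\scriptsize\textcircled{\tiny D}}=a^{D}$), and $ee^{\#}=aa^{D}$ is symmetric because $a$ is *-DMP (Lemma~\ref{2}); thus $e$ is EP.

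Then I would establish the equivalence, using Lemma~\ref{2} throughout. If $f=b(1-aa^{\scriptsize\textcircled{\tiny D}})$ is *-DMP then $f^{D}$ exists and $ff^{D}$ is symmetric. Since $e$ is EP and $ef=fe=0$, orthogonal additivity of the Drazin inverse (\cite[Corollary~1]{D1958}) gives $b^{D}=e^{\#}+f^{D}$; as $e,e^{\#}\in pRp$ and $f,f^{D}\in(1-p)R(1-p)$ the cross products vanish, so $bb^{D}=ee^{\#}+ff^{D}$, a sum of symmetric elements in the two complementary $*$-closed corners, hence symmetric, and $b$ is *-DMP by Lemma~\ref{2}. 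Conversely, if $b$ is *-DMP, put $x:=(1-p)b^{D}(1-p)\in(1-p)R(1-p)$; using $pb^{D}=b^{D}p$, the idempotence of $1-p$ and $b^{k+1}b^{D}=b^{k}$ (for $k$ the Drazin index of $b$), one checks directly that $xf=fx=(1-p)bb^{D}(1-p)$, $xfx=x$ and $f^{k+1}x=f^{k}$, so $f^{D}=x$, and $ff^{D}=(1-p)bb^{D}(1-p)$ is symmetric since $bb^{D}$ is and $(1-p)^{*}=1-p$; hence $f=b(1-aa^{\scriptsize\textcircled{\tiny D}})$ is *-DMP by Lemma~\ref{2}.

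The only real obstacle I anticipate is the corner bookkeeping: one must verify cleanly that $p$ commutes simultaneously with $b$, $b^{*}$ and $b^{D}$ (so that both $b$ and $b^{D}$ are block diagonal for the Peirce decomposition of $p$), that the Drazin inverse of an element of a corner $gRg$ is the same whether computed in $gRg$ or in $R$, and — the key structural point — that $e=bp$ is exactly the core part $a_{1}$, which is what makes ``$e^{\#}$ exists with $ee^{\#}$ symmetric'' available for free. With these in hand the transfer of the two conditions of Lemma~\ref{2} between $b$ and $f$ is routine.
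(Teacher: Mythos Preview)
Your proof is correct, but the route differs from the paper's in a meaningful way. Both arguments start from the same commutation $pb=bp$ (with $p=aa^{\scriptsize\textcircled{\tiny D}}$) and the same splitting $b=baa^{\scriptsize\textcircled{\tiny D}}+b(1-aa^{\scriptsize\textcircled{\tiny D}})$, and both eventually identify $baa^{\scriptsize\textcircled{\tiny D}}$ with the EP core part $a_1$. From there, however, the paper stays at the level of \emph{pseudo core inverses}: in the forward direction it verifies by hand that $[b(1-aa^{\scriptsize\textcircled{\tiny D}})]^{\scriptsize\textcircled{\tiny D}}=b^{\scriptsize\textcircled{\tiny D}}-a^{\scriptsize\textcircled{\tiny D}}$ and then invokes Theorem~\ref{5}(1) (commutation with the pseudo core inverse), and in the converse it uses an external additivity result for pseudo core inverses \cite[Theorem~4.4]{A2016} together with Lemma~\ref{1} ($b^{\scriptsize\textcircled{\tiny D}}=b^{D}$). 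You instead work entirely with \emph{Drazin inverses} and Lemma~\ref{2} (symmetry of $bb^{D}$), reading everything off the Peirce decomposition of the Hermitian idempotent $p$: the additivity you need is just \cite[Corollary~1]{D1958}, and the converse is the one-line observation $ff^{D}=(1-p)bb^{D}(1-p)$. Your argument is a bit more structural and avoids the pseudo core inverse additivity citation; the paper's gives the explicit formula $[b(1-aa^{\scriptsize\textcircled{\tiny D}})]^{\scriptsize\textcircled{\tiny D}}=b^{\scriptsize\textcircled{\tiny D}}-a^{\scriptsize\textcircled{\tiny D}}$ as a by-product.
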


\begin{proof}~From $a^{\scriptsize\textcircled{\tiny D}}a=a^{\scriptsize\textcircled{\tiny D}}b$ and $aa^{\scriptsize\textcircled{\tiny D}}=ba^{\scriptsize\textcircled{\tiny D}}$, it follows that $$aa^{\scriptsize\textcircled{\tiny D}}b=aa^{\scriptsize\textcircled{\tiny D}}a=ba^{\scriptsize\textcircled{\tiny D}}a=baa^{\scriptsize\textcircled{\tiny D}}.$$
Suppose that $b$ is *-DMP, then $bb^{\scriptsize\textcircled{\tiny D}}=b^{\scriptsize\textcircled{\tiny D}}b$. Next, we prove $[b(1-aa^{\scriptsize\textcircled{\tiny D}})]^{\scriptsize\textcircled{\tiny D}}=b^{\scriptsize\textcircled{\tiny D}}-a^{\scriptsize\textcircled{\tiny D}}$.
In fact, suppose $I(b)=k$, then
\begin{equation*}
\begin{aligned}
~~~~(b^{\scriptsize\textcircled{\tiny D}}-a^{\scriptsize\textcircled{\tiny D}})[b(1-aa^{\scriptsize\textcircled{\tiny D}})]^{k+1}
&=(b^{\scriptsize\textcircled{\tiny D}}-a^{\scriptsize\textcircled{\tiny D}})b^{k+1}(1-aa^{\scriptsize\textcircled{\tiny D}})=b^k(1-aa^{\scriptsize\textcircled{\tiny D}})-a^{\scriptsize\textcircled{\tiny D}}b^{k+1}(1-aa^{\scriptsize\textcircled{\tiny D}})\\
&=b^k(1-aa^{\scriptsize\textcircled{\tiny D}})=[b(1-aa^{\scriptsize\textcircled{\tiny D}})]^k;
\end{aligned}
\end{equation*}

$b(1-aa^{\scriptsize\textcircled{\tiny D}})(b^{\scriptsize\textcircled{\tiny D}}-a^{\scriptsize\textcircled{\tiny D}})=bb^{\scriptsize\textcircled{\tiny D}}-aa^{\scriptsize\textcircled{\tiny D}}$;

$b(1-aa^{\scriptsize\textcircled{\tiny D}})(b^{\scriptsize\textcircled{\tiny D}}-a^{\scriptsize\textcircled{\tiny D}})^2=(bb^{\scriptsize\textcircled{\tiny D}}-aa^{\scriptsize\textcircled{\tiny D}})(b^{\scriptsize\textcircled{\tiny D}}-a^{\scriptsize\textcircled{\tiny D}})=b^{\scriptsize\textcircled{\tiny D}}-b^{\scriptsize\textcircled{\tiny D}}aa^{\scriptsize\textcircled{\tiny D}}=b^{\scriptsize\textcircled{\tiny D}}-a^{\scriptsize\textcircled{\tiny D}}$.\\
We thus have $[b(1-aa^{\scriptsize\textcircled{\tiny D}})]^{\scriptsize\textcircled{\tiny D}}=b^{\scriptsize\textcircled{\tiny D}}-a^{\scriptsize\textcircled{\tiny D}}$. \\
So, $b(1-aa^{\scriptsize\textcircled{\tiny D}})[b(1-aa^{\scriptsize\textcircled{\tiny D}})]^{\scriptsize\textcircled{\tiny D}}=bb^{\scriptsize\textcircled{\tiny D}}-aa^{\scriptsize\textcircled{\tiny D}}$ and $[b(1-aa^{\scriptsize\textcircled{\tiny D}})]^{\scriptsize\textcircled{\tiny D}}b(1-aa^{\scriptsize\textcircled{\tiny D}})=b^{\scriptsize\textcircled{\tiny D}}b-b^{\scriptsize\textcircled{\tiny D}}baa^{\scriptsize\textcircled{\tiny D}}=bb^{\scriptsize\textcircled{\tiny D}}-aa^{\scriptsize\textcircled{\tiny D}}$.\\
Therefore, $b(1-aa^{\scriptsize\textcircled{\tiny D}})[b(1-aa^{\scriptsize\textcircled{\tiny D}})]^{\scriptsize\textcircled{\tiny D}}=[b(1-aa^{\scriptsize\textcircled{\tiny D}})]^{\scriptsize\textcircled{\tiny D}}b(1-aa^{\scriptsize\textcircled{\tiny D}})$. Hence $b(1-aa^{\scriptsize\textcircled{\tiny D}})$ is *-DMP.

Conversely, suppose that $b(1-aa^{\scriptsize\textcircled{\tiny D}})$ is *-DMP. Then,
  $[b(1-aa^{\scriptsize\textcircled{\tiny D}})]^{\scriptsize\textcircled{\tiny D}}=[b(1-aa^{\scriptsize\textcircled{\tiny D}})]^{D}$.
  We can easily check that $$(baa^{\scriptsize\textcircled{\tiny D}})^{\scriptsize\textcircled{\tiny D}}=(baa^{\scriptsize\textcircled{\tiny D}})^{\tiny\textcircled{\tiny \#}}=a^{\scriptsize\textcircled{\tiny D}}.$$
Since
$b=b(1-aa^{\scriptsize\textcircled{\tiny D}})+baa^{\scriptsize\textcircled{\tiny D}}$, $b(1-aa^{\scriptsize\textcircled{\tiny D}})baa^{\scriptsize\textcircled{\tiny D}}
=b(1-aa^{\scriptsize\textcircled{\tiny D}})aa^{\scriptsize\textcircled{\tiny D}}b=0$,
$baa^{\scriptsize\textcircled{\tiny D}}b(1-aa^{\scriptsize\textcircled{\tiny D}})=baa^{\scriptsize\textcircled{\tiny D}}(1-aa^{\scriptsize\textcircled{\tiny D}})b=0$,
and $(baa^{\scriptsize\textcircled{\tiny D}})^*b(1-aa^{\scriptsize\textcircled{\tiny D}})=b^*aa^{\scriptsize\textcircled{\tiny D}}(1-aa^{\scriptsize\textcircled{\tiny D}})b=0$,
then $b^{\scriptsize\textcircled{\tiny D}}=[b(1-aa^{\scriptsize\textcircled{\tiny D}})]^{\scriptsize\textcircled{\tiny D}}+a^{\scriptsize\textcircled{\tiny D}}$ (see~\cite[Theorem 4.4]{A2016}) and $b^{D}=[b(1-aa^{\scriptsize\textcircled{\tiny D}})]^{D}+(baa^{\scriptsize\textcircled{\tiny D}})^{\#}=[b(1-aa^{\scriptsize\textcircled{\tiny D}})]^{D}+a^{\scriptsize\textcircled{\tiny D}}$. Thus, $b$ is *-DMP.
\end{proof}

\section{Characterizations for $aa^{\scriptsize\textcircled{\tiny D}}=bb^{\scriptsize\textcircled{\tiny D}}$}
Let $a,~b\in R$. If $a^{\odot}$ and $b^{\odot}$ are some kind of generalized inverses of $a$ and $b$. It is very interesting to discuss whether $aa^{\odot}=bb^{\odot}$.
Koliha et al. \cite[Theorem 6.1]{D2002},  Mosi\'{c} et al. \cite[Theorem 3.7]{20144} and
Patr\'{i}cio et al. \cite[Theorem 2.3]{D2010} gave some equivalences for generalized Drazin inverses, image-kernel $(p, q)$-inverses and Moore-Penrose inverses, respectively. Here we give a characterization for $aa^{\scriptsize\textcircled{\tiny D}}=bb^{\scriptsize\textcircled{\tiny D}}$.

\begin{prop}\label{5.1}~Let $a,~b\in R^{\scriptsize\textcircled{\tiny D}}$. Then the following are equivalent:\\
$(1)~aa^{\scriptsize\textcircled{\tiny D}}=bb^{\scriptsize\textcircled{\tiny D}}aa^{\scriptsize\textcircled{\tiny D}}$;\\
$(2)~aa^{\scriptsize\textcircled{\tiny D}}=aa^{\scriptsize\textcircled{\tiny D}}bb^{\scriptsize\textcircled{\tiny D}}$;\\
$(3)~a^{\scriptsize\textcircled{\tiny D}}=a^{\scriptsize\textcircled{\tiny D}}bb^{\scriptsize\textcircled{\tiny D}}$;\\
$(4)~Ra^{\scriptsize\textcircled{\tiny D}}\subseteq Ra^{\scriptsize\textcircled{\tiny D}}bb^{\scriptsize\textcircled{\tiny D}}$.
\end{prop}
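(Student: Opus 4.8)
The plan is to prove the cycle of implications $(1)\Rightarrow(3)\Rightarrow(4)\Rightarrow(1)$ together with the equivalence $(1)\Leftrightarrow(2)$, using only the three defining equations of the pseudo core inverse (namely $a^{\scriptsize\textcircled{\tiny D}}a^{m+1}=a^m$, $a(a^{\scriptsize\textcircled{\tiny D}})^2=a^{\scriptsize\textcircled{\tiny D}}$, $(aa^{\scriptsize\textcircled{\tiny D}})^*=aa^{\scriptsize\textcircled{\tiny D}}$) and the easily derived consequences $a^{\scriptsize\textcircled{\tiny D}}aa^{\scriptsize\textcircled{\tiny D}}=a^{\scriptsize\textcircled{\tiny D}}$ and $aa^{\scriptsize\textcircled{\tiny D}}a^{\scriptsize\textcircled{\tiny D}}=a^{\scriptsize\textcircled{\tiny D}}$. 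The key observation I would isolate first is that $aa^{\scriptsize\textcircled{\tiny D}}$ and $bb^{\scriptsize\textcircled{\tiny D}}$ are both symmetric idempotents; so the symmetry trick used repeatedly in the paper (taking $*$ of an idempotent identity) will convert $bb^{\scriptsize\textcircled{\tiny D}}aa^{\scriptsize\textcircled{\tiny D}}=aa^{\scriptsize\textcircled{\tiny D}}$ into its transpose $aa^{\scriptsize\textcircled{\tiny D}}bb^{\scriptsize\textcircled{\tiny D}}=aa^{\scriptsize\textcircled{\tiny D}}$, which is exactly $(1)\Leftrightarrow(2)$.

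For $(2)\Rightarrow(3)$ I would pre-multiply $aa^{\scriptsize\textcircled{\tiny D}}=aa^{\scriptsize\textcircled{\tiny D}}bb^{\scriptsize\textcircled{\tiny D}}$ by $a^{\scriptsize\textcircled{\tiny D}}$ and use $a^{\scriptsize\textcircled{\tiny D}}aa^{\scriptsize\textcircled{\tiny D}}=a^{\scriptsize\textcircled{\tiny D}}$ to get $a^{\scriptsize\textcircled{\tiny D}}=a^{\scriptsize\textcircled{\tiny D}}bb^{\scriptsize\textcircled{\tiny D}}$. For $(3)\Rightarrow(4)$ there is nothing to do beyond observing that $a^{\scriptsize\textcircled{\tiny D}}=a^{\scriptsize\textcircled{\tiny D}}bb^{\scriptsize\textcircled{\tiny D}}$ forces $Ra^{\scriptsize\textcircled{\tiny D}}=Ra^{\scriptsize\textcircled{\tiny D}}bb^{\scriptsize\textcircled{\tiny D}}\subseteq Ra^{\scriptsize\textcircled{\tiny D}}bb^{\scriptsize\textcircled{\tiny D}}$, and conversely the right-hand side is trivially contained in $Ra^{\scriptsize\textcircled{\tiny D}}$, so in fact $(3)$ and $(4)$ say the same thing up to the reverse inclusion being automatic. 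For $(4)\Rightarrow(1)$: the inclusion gives $a^{\scriptsize\textcircled{\tiny D}}=ra^{\scriptsize\textcircled{\tiny D}}bb^{\scriptsize\textcircled{\tiny D}}$ for some $r\in R$; post-multiplying by $b$ and using that $bb^{\scriptsize\textcircled{\tiny D}}$ absorbs on the right of $a^{\scriptsize\textcircled{\tiny D}}bb^{\scriptsize\textcircled{\tiny D}}$, then multiplying on the left by $a$, I expect to recover $a^{\scriptsize\textcircled{\tiny D}}=a^{\scriptsize\textcircled{\tiny D}}bb^{\scriptsize\textcircled{\tiny D}}$ and hence $aa^{\scriptsize\textcircled{\tiny D}}=aa^{\scriptsize\textcircled{\tiny D}}bb^{\scriptsize\textcircled{\tiny D}}$; taking $*$ (both factors are symmetric idempotents) yields $aa^{\scriptsize\textcircled{\tiny D}}=bb^{\scriptsize\textcircled{\tiny D}}aa^{\scriptsize\textcircled{\tiny D}}$, which is $(1)$.

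The only step needing genuine care is $(4)\Rightarrow(1)$, where one must eliminate the unknown $r$: from $a^{\scriptsize\textcircled{\tiny D}}=ra^{\scriptsize\textcircled{\tiny D}}bb^{\scriptsize\textcircled{\tiny D}}$ one gets $a^{\scriptsize\textcircled{\tiny D}}bb^{\scriptsize\textcircled{\tiny D}}=ra^{\scriptsize\textcircled{\tiny D}}bb^{\scriptsize\textcircled{\tiny D}}bb^{\scriptsize\textcircled{\tiny D}}=ra^{\scriptsize\textcircled{\tiny D}}bb^{\scriptsize\textcircled{\tiny D}}=a^{\scriptsize\textcircled{\tiny D}}$ since $bb^{\scriptsize\textcircled{\tiny D}}$ is idempotent, so $(3)$ follows at once and $r$ disappears — this is cleaner than I first expected. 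I would present the proof as $(1)\Leftrightarrow(2)$, $(2)\Rightarrow(3)\Rightarrow(4)\Rightarrow(3)'\Rightarrow(2)$, citing Lemma~\ref{4} only if needed to guarantee the auxiliary identities $a^{\scriptsize\textcircled{\tiny D}}aa^{\scriptsize\textcircled{\tiny D}}=a^{\scriptsize\textcircled{\tiny D}}$ and $aa^{\scriptsize\textcircled{\tiny D}}a^{\scriptsize\textcircled{\tiny D}}=a^{\scriptsize\textcircled{\tiny D}}$, which otherwise follow directly from the defining equations.
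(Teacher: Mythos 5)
Your proof is correct and follows essentially the same route as the paper: $(1)\Leftrightarrow(2)$ by applying the involution to the symmetric elements $aa^{\scriptsize\textcircled{\tiny D}}$ and $bb^{\scriptsize\textcircled{\tiny D}}$, $(2)\Rightarrow(3)$ by pre-multiplying by $a^{\scriptsize\textcircled{\tiny D}}$ and using $a^{\scriptsize\textcircled{\tiny D}}aa^{\scriptsize\textcircled{\tiny D}}=a^{\scriptsize\textcircled{\tiny D}}$, $(3)\Rightarrow(4)$ trivially, and closing the cycle from $(4)$ by writing $a^{\scriptsize\textcircled{\tiny D}}=ra^{\scriptsize\textcircled{\tiny D}}bb^{\scriptsize\textcircled{\tiny D}}$ and absorbing the idempotent $bb^{\scriptsize\textcircled{\tiny D}}$ to eliminate $r$ (you land on $(3)$, the paper on $(2)$ — same idea). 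The one inaccuracy is your aside that the reverse inclusion $Ra^{\scriptsize\textcircled{\tiny D}}bb^{\scriptsize\textcircled{\tiny D}}\subseteq Ra^{\scriptsize\textcircled{\tiny D}}$ is automatic — it is not obvious a priori — but you never rely on it, so the argument stands.
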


\begin{proof}~$(1)\Leftrightarrow(2)$ by taking an involution.\\
$(2)\Rightarrow(3)$. Pre-multiply $a^{\scriptsize\textcircled{\tiny D}}$ by $aa^{\scriptsize\textcircled{\tiny D}}
=aa^{\scriptsize\textcircled{\tiny D}}bb^{\scriptsize\textcircled{\tiny D}}$, then we get $a^{\scriptsize\textcircled{\tiny D}}=
a^{\scriptsize\textcircled{\tiny D}}bb^{\scriptsize\textcircled{\tiny D}}$.\\
$(3)\Rightarrow(4)$ is clear.\\
$(4)\Rightarrow(2)$. From $Ra^{\scriptsize\textcircled{\tiny D}}\subseteq Ra^{\scriptsize\textcircled{\tiny D}}bb^{\scriptsize\textcircled{\tiny D}}$, it follows that $a^{\scriptsize\textcircled{\tiny D}}
=xa^{\scriptsize\textcircled{\tiny D}}bb^{\scriptsize\textcircled{\tiny D}}$ for some $x\in R$.
Then, $aa^{\scriptsize\textcircled{\tiny D}}
=axa^{\scriptsize\textcircled{\tiny D}}bb^{\scriptsize\textcircled{\tiny D}}
=(axa^{\scriptsize\textcircled{\tiny D}}bb^{\scriptsize\textcircled{\tiny D}})
bb^{\scriptsize\textcircled{\tiny D}}
=aa^{\scriptsize\textcircled{\tiny D}}bb^{\scriptsize\textcircled{\tiny D}}$.
\end{proof}

The above proposition gives some equivalences to $aa^{\scriptsize\textcircled{\tiny D}}=bb^{\scriptsize\textcircled{\tiny D}}aa^{\scriptsize\textcircled{\tiny D}}$, which enrich the following result. $R^{-1}$ denotes all the invertible elements in $R$.

\begin{thm}\label{x}~Let $a,~b\in R^{\scriptsize\textcircled{\tiny D}}$ with $I(a)=m$. Then the following are equivalent:\\
$(1)~aa^{\scriptsize\textcircled{\tiny D}}=bb^{\scriptsize\textcircled{\tiny D}}$;\\
$(2)~aa^{\scriptsize\textcircled{\tiny D}}=aa^{\scriptsize\textcircled{\tiny D}}bb^{\scriptsize\textcircled{\tiny D}}$ and $u=aa^{\scriptsize\textcircled{\tiny D}}+1-bb^{\scriptsize\textcircled{\tiny D}}\in R^{-1}$;\\
$(3)~aa^{\scriptsize\textcircled{\tiny D}}=aa^{\scriptsize\textcircled{\tiny D}}bb^{\scriptsize\textcircled{\tiny D}}$ and
$v=a^m+1-bb^{\scriptsize\textcircled{\tiny D}}\in R^{-1}$;\\
$(4)~aa^{\scriptsize\textcircled{\tiny D}}$ commutes with $bb^{\scriptsize\textcircled{\tiny D}}$, $u=aa^{\scriptsize\textcircled{\tiny D}}+1-bb^{\scriptsize\textcircled{\tiny D}}\in R^{-1}$ and $s=bb^{\scriptsize\textcircled{\tiny D}}+1-aa^{\scriptsize\textcircled{\tiny D}}\in R^{-1}$;\\
$(5)~aa^{\scriptsize\textcircled{\tiny D}}$ commutes with $bb^{\scriptsize\textcircled{\tiny D}}$ and
$w=1-(aa^{\scriptsize\textcircled{\tiny D}}-bb^{\scriptsize\textcircled{\tiny D}})^2\in R^{-1}$;\\
$(6)~aa^{\scriptsize\textcircled{\tiny D}}$ commutes with $bb^{\scriptsize\textcircled{\tiny D}}$ and $b^{\scriptsize\textcircled{\tiny D}}aa^{\scriptsize\textcircled{\tiny D}}-a^{\scriptsize\textcircled{\tiny D}}bb^{\scriptsize\textcircled{\tiny D}}=b^{\scriptsize\textcircled{\tiny D}}-a^{\scriptsize\textcircled{\tiny D}}$.
\end{thm}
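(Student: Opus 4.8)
Throughout, write $p=aa^{\scriptsize\textcircled{\tiny D}}$ and $q=bb^{\scriptsize\textcircled{\tiny D}}$. Both are self-adjoint idempotents, and from the defining equations of the pseudo core inverse one has $a^{\scriptsize\textcircled{\tiny D}}=pa^{\scriptsize\textcircled{\tiny D}}=a^{\scriptsize\textcircled{\tiny D}}p$, $pa^m=a^m$, $a^m(a^{\scriptsize\textcircled{\tiny D}})^m=p$, $p(a^{\scriptsize\textcircled{\tiny D}})^m=(a^{\scriptsize\textcircled{\tiny D}})^m=(a^{\scriptsize\textcircled{\tiny D}})^m p$, together with the symmetric identities for $b,q$. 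Note that the hypothesis $aa^{\scriptsize\textcircled{\tiny D}}bb^{\scriptsize\textcircled{\tiny D}}=aa^{\scriptsize\textcircled{\tiny D}}$ appearing in (2) and (3) is exactly $pq=p$, and applying the involution it also forces $qp=p$; similarly ``$aa^{\scriptsize\textcircled{\tiny D}}$ commutes with $bb^{\scriptsize\textcircled{\tiny D}}$'' is just $pq=qp$. The whole statement is the assertion that each of (1)--(6) is equivalent to $p=q$, so the plan is to prove $(1)\Rightarrow(k)$ for $k=2,\dots,6$, and then $(2)\Rightarrow(1)$, $(3)\Rightarrow(1)$, $(4)\Leftrightarrow(5)$, $(5)\Rightarrow(1)$, $(6)\Rightarrow(1)$.

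The forward implications are direct substitutions once $p=q$: then $pq=p$, $u=p+1-q=1$, $s=q+1-p=1$, $w=1-(p-q)^{2}=1$, and the identity of (6) collapses to $(b^{\scriptsize\textcircled{\tiny D}}-a^{\scriptsize\textcircled{\tiny D}})p=b^{\scriptsize\textcircled{\tiny D}}-a^{\scriptsize\textcircled{\tiny D}}$, which holds since $a^{\scriptsize\textcircled{\tiny D}}p=a^{\scriptsize\textcircled{\tiny D}}$ and $b^{\scriptsize\textcircled{\tiny D}}p=b^{\scriptsize\textcircled{\tiny D}}q=b^{\scriptsize\textcircled{\tiny D}}$. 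The only forward step with content is $(1)\Rightarrow(3)$, where I must show that $v=a^m+1-p$ is invertible. The plan here is first to establish, by induction on $j$, the telescoping identity $(a^{\scriptsize\textcircled{\tiny D}})^{j}a^{m+j}=a^m$ for $1\le j\le m$ (the base case $j=1$ being the defining equation $a^{\scriptsize\textcircled{\tiny D}}a^{m+1}=a^m$, and the step using $a^{\scriptsize\textcircled{\tiny D}}a^{m+j+1}=(a^{\scriptsize\textcircled{\tiny D}}a^{m+1})a^{j}=a^{m+j}$), giving in particular $(a^{\scriptsize\textcircled{\tiny D}})^{m}a^{2m}=a^m$; this also yields $(a^{\scriptsize\textcircled{\tiny D}})^m a^m\,p=(a^{\scriptsize\textcircled{\tiny D}})^m a^{2m}(a^{\scriptsize\textcircled{\tiny D}})^m=p$. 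Then one checks directly that $1+(a^{\scriptsize\textcircled{\tiny D}})^{m}(1-a^m)$ is a two-sided inverse of $v$, using the identities listed above.

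For the converses: in $(2)\Rightarrow(1)$, $u=p+1-q$ is self-adjoint and a one-line computation using $qp=p$ gives $up=uq=p$, so $u(p-q)=0$ and invertibility of $u$ forces $p=q$; $(3)\Rightarrow(1)$ is the same device --- using $pa^m=a^m$, $qa^m=(qp)a^m=pa^m=a^m$ and $pq=p$ one gets $pv=qv=a^m$, hence $(p-q)v=0$. For $(4)\Leftrightarrow(5)$ one verifies (using only $pq=qp$) that $us=su=1-(p-q)^{2}=w$, with $u,s,w$ pairwise commuting, so $w$ is invertible iff $u$ and $s$ both are. Then $(5)\Rightarrow(1)$: when $pq=qp$ the self-adjoint element $t=p-q$ satisfies $t^{3}=t$, hence $tw=t-t^{3}=0$, and invertibility of $w$ gives $t=0$. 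Finally $(6)\Rightarrow(1)$: rewrite the identity as $a^{\scriptsize\textcircled{\tiny D}}(1-q)=b^{\scriptsize\textcircled{\tiny D}}(1-p)$; with $pq=qp$ the elements $p'=p(1-q)$, $q'=q(1-p)$ are idempotent, the common value $z$ of the two sides equals $a^{\scriptsize\textcircled{\tiny D}}p'$ and also $b^{\scriptsize\textcircled{\tiny D}}q'$, and since $p'q'=p(1-q)q(1-p)=0$ we get $z=zq'=a^{\scriptsize\textcircled{\tiny D}}p'q'=0$; then $a^{\scriptsize\textcircled{\tiny D}}(1-q)=0$ gives $p=aa^{\scriptsize\textcircled{\tiny D}}=pq$, symmetrically $q=qp$, and applying the involution closes the loop to $p=q$.

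The main obstacle is $(1)\Rightarrow(3)$: all the other equivalences are short idempotent manipulations, but showing $a^m+1-aa^{\scriptsize\textcircled{\tiny D}}$ invertible genuinely uses the index-$m$ hypothesis, precisely through the identity $(a^{\scriptsize\textcircled{\tiny D}})^{m}a^{2m}=a^m$ (equivalently, $(a^{\scriptsize\textcircled{\tiny D}})^m a^m$ is a right unit for $aa^{\scriptsize\textcircled{\tiny D}}$); once that is in hand the explicit inverse above finishes the step. It is also worth checking at the outset the elementary facts $pa^m=a^m$, $a^m(a^{\scriptsize\textcircled{\tiny D}})^m=p$ and $a^{\scriptsize\textcircled{\tiny D}}=pa^{\scriptsize\textcircled{\tiny D}}=a^{\scriptsize\textcircled{\tiny D}}p$, since they are invoked in essentially every line.
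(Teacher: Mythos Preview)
Your argument is correct and follows the same idempotent-manipulation outline as the paper, with only minor route differences. The paper also treats $(1)\Rightarrow(2)$--$(6)$ as immediate, proves $(5)\Rightarrow(4)$ via the same factorisation $w=us=su$, and handles $(3)\Rightarrow(1)$ exactly as you do by computing $pv=qv=a^m$. The noteworthy differences are: for the invertibility of $v=a^m+1-aa^{\scriptsize\textcircled{\tiny D}}$ the paper simply cites the classical fact (Puystjens--Hartwig) that $c+1-cc^{\#}$ is a unit when $c=a^m$ is group invertible, and then links $u$ and $v$ by the identity $u(a^m+1-aa^{\scriptsize\textcircled{\tiny D}})=v$, whereas you write down an explicit two-sided inverse $1+(a^{\scriptsize\textcircled{\tiny D}})^m(1-a^m)$, which makes the step self-contained; for $(5)\Rightarrow(1)$ the paper passes through $(4)$, while your direct use of $t^3=t$ (hence $tw=0$) is a neat shortcut; and for $(6)\Rightarrow(1)$ the paper's device is slightly quicker---post-multiply $b^{\scriptsize\textcircled{\tiny D}}p-a^{\scriptsize\textcircled{\tiny D}}q=b^{\scriptsize\textcircled{\tiny D}}-a^{\scriptsize\textcircled{\tiny D}}$ by $p$ and use $qp=pq$ to get $a^{\scriptsize\textcircled{\tiny D}}=a^{\scriptsize\textcircled{\tiny D}}q$, then symmetrically $b^{\scriptsize\textcircled{\tiny D}}=b^{\scriptsize\textcircled{\tiny D}}p$---avoiding the auxiliary idempotents $p',q'$. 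None of these differences is substantive; both proofs buy the same result with the same amount of work.
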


\begin{proof}~$(1)\Rightarrow(2)$-(6) is clear.\\
(2)$\Leftrightarrow$(3). Since $a^{\scriptsize\textcircled{\tiny D}_m}$ exists, then $a^{D_m}$ exists by Lemma \ref{4}. So $(a^m)^{\#}$ exists. Therefore $a^m+1-aa^{\scriptsize\textcircled{\tiny D}_m}\in R^{-1}$ (see \cite[Theorem 1]{D1997}). From $aa^{\scriptsize\textcircled{\tiny D}}
=aa^{\scriptsize\textcircled{\tiny D}}bb^{\scriptsize\textcircled{\tiny D}}$, it follows that
$aa^{\scriptsize\textcircled{\tiny D}}bb^{\scriptsize\textcircled{\tiny D}}
=bb^{\scriptsize\textcircled{\tiny D}}aa^{\scriptsize\textcircled{\tiny D}}
=aa^{\scriptsize\textcircled{\tiny D}}$ by Proposition \ref{5.1}.
Observe that
$(aa^{\scriptsize\textcircled{\tiny D}}+1-bb^{\scriptsize\textcircled{\tiny D}})
(a^m+1-aa^{\scriptsize\textcircled{\tiny D}})
=a^m+1-bb^{\scriptsize\textcircled{\tiny D}}$, and hence $u\in R^{-1}$ if and only if $v\in R^{-1}$.\\
(3)$\Rightarrow$(1).~Notice that
$aa^{\scriptsize\textcircled{\tiny D}}v=a^m+aa^{\scriptsize\textcircled{\tiny D}}-aa^{\scriptsize\textcircled{\tiny D}}bb^{\scriptsize\textcircled{\tiny D}}
=a^m$
and
$bb^{\scriptsize\textcircled{\tiny D}}v=bb^{\scriptsize\textcircled{\tiny D}}a^m
=bb^{\scriptsize\textcircled{\tiny D}}aa^{\scriptsize\textcircled{\tiny D}}a^m=aa^{\scriptsize\textcircled{\tiny D}}a^m=a^m$.
Therefore $aa^{\scriptsize\textcircled{\tiny D}}=bb^{\scriptsize\textcircled{\tiny D}}$.\\
(4)$\Rightarrow$(1). Since $ubb^{\scriptsize\textcircled{\tiny D}}
=aa^{\scriptsize\textcircled{\tiny D}}bb^{\scriptsize\textcircled{\tiny D}}
=uaa^{\scriptsize\textcircled{\tiny D}}bb^{\scriptsize\textcircled{\tiny D}}$,
 $saa^{\scriptsize\textcircled{\tiny D}}
=aa^{\scriptsize\textcircled{\tiny D}}bb^{\scriptsize\textcircled{\tiny D}} 
=saa^{\scriptsize\textcircled{\tiny D}}bb^{\scriptsize\textcircled{\tiny D}}$.
Hence $aa^{\scriptsize\textcircled{\tiny D}}
=aa^{\scriptsize\textcircled{\tiny D}}bb^{\scriptsize\textcircled{\tiny D}}
=bb^{\scriptsize\textcircled{\tiny D}}$.  \\
(5)$\Rightarrow$(4). Note that
$1-(aa^{\scriptsize\textcircled{\tiny D}}-bb^{\scriptsize\textcircled{\tiny D}})^2
=(bb^{\scriptsize\textcircled{\tiny D}}+1-aa^{\scriptsize\textcircled{\tiny D}})
(aa^{\scriptsize\textcircled{\tiny D}}+1-bb^{\scriptsize\textcircled{\tiny D}})
=(aa^{\scriptsize\textcircled{\tiny D}}+1-bb^{\scriptsize\textcircled{\tiny D}})
(bb^{\scriptsize\textcircled{\tiny D}}+1-aa^{\scriptsize\textcircled{\tiny D}})$.
Hence $w\in R^{-1}$ implies $u,~s\in R^{-1}$. \\
(6)$\Rightarrow$(1).
Post-multiply $b^{\scriptsize\textcircled{\tiny D}}aa^{\scriptsize\textcircled{\tiny D}}-a^{\scriptsize\textcircled{\tiny D}}bb^{\scriptsize\textcircled{\tiny D}}=b^{\scriptsize\textcircled{\tiny D}}-a^{\scriptsize\textcircled{\tiny D}}$ by $aa^{\scriptsize\textcircled{\tiny D}}$,
then $b^{\scriptsize\textcircled{\tiny D}}aa^{\scriptsize\textcircled{\tiny D}}-a^{\scriptsize\textcircled{\tiny D}}bb^{\scriptsize\textcircled{\tiny D}}aa^{\scriptsize\textcircled{\tiny D}}=b^{\scriptsize\textcircled{\tiny D}}aa^{\scriptsize\textcircled{\tiny D}}-a^{\scriptsize\textcircled{\tiny D}}$.
So, $a^{\scriptsize\textcircled{\tiny D}}=a^{\scriptsize\textcircled{\tiny D}}bb^{\scriptsize\textcircled{\tiny D}}aa^{\scriptsize\textcircled{\tiny D}}=a^{\scriptsize\textcircled{\tiny D}}bb^{\scriptsize\textcircled{\tiny D}}$.
Therefore, $b^{\scriptsize\textcircled{\tiny D}}
=b^{\scriptsize\textcircled{\tiny D}}aa^{\scriptsize\textcircled{\tiny D}}$.
Hence $aa^{\scriptsize\textcircled{\tiny D}}
=aa^{\scriptsize\textcircled{\tiny D}}bb^{\scriptsize\textcircled{\tiny D}}
=bb^{\scriptsize\textcircled{\tiny D}}aa^{\scriptsize\textcircled{\tiny D}}
=bb^{\scriptsize\textcircled{\tiny D}}$.
\end{proof}

Take $b=a^*$ in Theorem \ref{x}, then we obtain a characterization of *-DMP elements by applying Theorem \ref{22}.
\begin{cor}
Let $a\in R^{\scriptsize\textcircled{\tiny D}_m}\cap  R_{\scriptsize\textcircled{\tiny D}_m}$. Then the following are equivalent:\\
$(1)$ $a$ is *-DMP with index $m$;\\
$(2)~aa^{\scriptsize\textcircled{\tiny D}_m}=a_{\scriptsize\textcircled{\tiny D}_m}a$;\\
$(3)~aa^{\scriptsize\textcircled{\tiny D}_m}=aa^{\scriptsize\textcircled{\tiny D}_m}a_{\scriptsize\textcircled{\tiny D}_m}a$ and $u=aa^{\scriptsize\textcircled{\tiny D}_m}+1-a_{\scriptsize\textcircled{\tiny D}_m}a\in R^{-1}$;\\
$(4)~aa^{\scriptsize\textcircled{\tiny D}_m}=aa^{\scriptsize\textcircled{\tiny D}_m}a_{\scriptsize\textcircled{\tiny D}_m}a$ and $v=a^m+1-a_{\scriptsize\textcircled{\tiny D}_m}a\in R^{-1}$;\\
$(5)~aa^{\scriptsize\textcircled{\tiny D}_m}$ commutes with $a_{\scriptsize\textcircled{\tiny D}_m}a$, $u=aa^{\scriptsize\textcircled{\tiny D}_m}+1-a_{\scriptsize\textcircled{\tiny D}_m}a\in R^{-1}$ and $s=a_{\scriptsize\textcircled{\tiny D}_m}a+1-aa^{\scriptsize\textcircled{\tiny D}_m}\in R^{-1}$;\\
$(6)~aa^{\scriptsize\textcircled{\tiny D}_m}$ commutes with $a_{\scriptsize\textcircled{\tiny D}_m}a$ and
$w=1-(aa^{\scriptsize\textcircled{\tiny D}_m}-a_{\scriptsize\textcircled{\tiny D}_m}a)^2\in R^{-1}$;\\
$(7)~aa^{\scriptsize\textcircled{\tiny D}_m}$ commutes with $a_{\scriptsize\textcircled{\tiny D}_m}a$ and $a_{\scriptsize\textcircled{\tiny D}_m}^*aa^{\scriptsize\textcircled{\tiny D}_m}-a^{\scriptsize\textcircled{\tiny D}_m}a_{\scriptsize\textcircled{\tiny D}_m}a=a_{\scriptsize\textcircled{\tiny D}_m}^*-a^{\scriptsize\textcircled{\tiny D}_m}$.
\end{cor}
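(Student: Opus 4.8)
The plan is to obtain the corollary by specializing Theorem~\ref{x} to the pair $(a,a^*)$, after first settling $(1)\Leftrightarrow(2)$ directly. Since $a\in R^{\scriptsize\textcircled{\tiny D}_m}\cap R_{\scriptsize\textcircled{\tiny D}_m}$, both $a^{\scriptsize\textcircled{\tiny D}_m}$ and $a_{\scriptsize\textcircled{\tiny D}_m}$ exist, so Theorem~\ref{22}(3) already gives that $a$ is *-DMP with index $m$ if and only if $aa^{\scriptsize\textcircled{\tiny D}_m}=a_{\scriptsize\textcircled{\tiny D}_m}a$; this is precisely $(1)\Leftrightarrow(2)$.

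For the remaining equivalences I would put $b=a^*$ in Theorem~\ref{x}, which applies to the pair $(a,a^*)$ (here $I(a)=m$ and $a,a^*\in R^{\scriptsize\textcircled{\tiny D}}$, the latter because $a_{\scriptsize\textcircled{\tiny D}_m}$ exists). The translation dictionary is: by the remark following the definition of the dual pseudo core inverse, $(a^*)^{\scriptsize\textcircled{\tiny D}_m}$ exists with $(a^*)^{\scriptsize\textcircled{\tiny D}_m}=(a_{\scriptsize\textcircled{\tiny D}_m})^*$, hence $b^{\scriptsize\textcircled{\tiny D}}=(a_{\scriptsize\textcircled{\tiny D}_m})^*$ and
$$bb^{\scriptsize\textcircled{\tiny D}}=a^*(a_{\scriptsize\textcircled{\tiny D}_m})^*=(a_{\scriptsize\textcircled{\tiny D}_m}a)^*=a_{\scriptsize\textcircled{\tiny D}_m}a,$$
the last step because $a_{\scriptsize\textcircled{\tiny D}_m}a$ is symmetric by the defining equations of the dual pseudo core inverse; also $aa^{\scriptsize\textcircled{\tiny D}}=aa^{\scriptsize\textcircled{\tiny D}_m}$. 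Substituting these into conditions $(1)$--$(6)$ of Theorem~\ref{x} yields conditions $(2)$--$(7)$ of the corollary verbatim: $aa^{\scriptsize\textcircled{\tiny D}}=bb^{\scriptsize\textcircled{\tiny D}}$ becomes $aa^{\scriptsize\textcircled{\tiny D}_m}=a_{\scriptsize\textcircled{\tiny D}_m}a$; the idempotent $bb^{\scriptsize\textcircled{\tiny D}}$ inside $u,v,s,w$ becomes $a_{\scriptsize\textcircled{\tiny D}_m}a$ and "commutes with $bb^{\scriptsize\textcircled{\tiny D}}$" becomes "commutes with $a_{\scriptsize\textcircled{\tiny D}_m}a$"; and in Theorem~\ref{x}(6) one replaces $b^{\scriptsize\textcircled{\tiny D}}$ by $(a_{\scriptsize\textcircled{\tiny D}_m})^*$ so that $b^{\scriptsize\textcircled{\tiny D}}aa^{\scriptsize\textcircled{\tiny D}}-a^{\scriptsize\textcircled{\tiny D}}bb^{\scriptsize\textcircled{\tiny D}}=b^{\scriptsize\textcircled{\tiny D}}-a^{\scriptsize\textcircled{\tiny D}}$ becomes $a_{\scriptsize\textcircled{\tiny D}_m}^*aa^{\scriptsize\textcircled{\tiny D}_m}-a^{\scriptsize\textcircled{\tiny D}_m}a_{\scriptsize\textcircled{\tiny D}_m}a=a_{\scriptsize\textcircled{\tiny D}_m}^*-a^{\scriptsize\textcircled{\tiny D}_m}$. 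The equivalence of $(2)$--$(7)$ is then exactly Theorem~\ref{x}, and combined with $(1)\Leftrightarrow(2)$ this finishes the proof.

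I do not expect a genuine obstacle here; essentially all the content sits in the substitution and the single symmetry identity $a^*(a^*)^{\scriptsize\textcircled{\tiny D}_m}=a_{\scriptsize\textcircled{\tiny D}_m}a$. The two places that merit one careful line are (i) recording that $a,a^*\in R^{\scriptsize\textcircled{\tiny D}}$ with $I(a)=m$, so that Theorem~\ref{x} is legitimately invoked, and (ii) inserting the involution consistently when translating Theorem~\ref{x}(6), where $b^{\scriptsize\textcircled{\tiny D}}$ appears both standing alone and multiplied on either side. No further computation is needed.
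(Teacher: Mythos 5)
Your proposal is correct and follows exactly the route the paper intends: the paper's own (one-line) proof is "take $b=a^*$ in Theorem~5.2 and apply Theorem~2.9," and you have merely filled in the substitution dictionary $b^{\scriptsize\textcircled{\tiny D}}=(a_{\scriptsize\textcircled{\tiny D}_m})^*$, $bb^{\scriptsize\textcircled{\tiny D}}=(a_{\scriptsize\textcircled{\tiny D}_m}a)^*=a_{\scriptsize\textcircled{\tiny D}_m}a$ together with the hypothesis check $I(a)=m$, all of which is accurate.
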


\vspace{0.8cm}

\noindent {\large\bf Acknowledgements}\\
This research is supported by the National Natural Science Foundation
of China (No.11371089), the Scientific Innovation Research of College Graduates in Jiangsu Province (No.KYZZ16$\_$0112),
the Natural Science Foundation of Jiangsu Province (No.BK20141327).

\end{document}